    \definecolor{darkblue}{rgb}{0,0,.85} % not great, not terrible
    \definecolor{darkred}{rgb}{0.84,0,0}
    \def\paragraph{\@startsection{paragraph}{4}%
    \z@\z@{-\fontdimen2\font}%
    {\normalfont\bfseries}}
    \numberwithin{equation}{subsection}
    \newtheorem{lem}{Lemma}[subsection]
    \newtheorem{cor}[lem]{Corollary}
    \newtheorem{thm}[lem]{Theorem}
    \newtheorem{prop}[lem]{Proposition}
    \newtheorem{thmi}{Theorem} %[section] % separate numbering for intro
    \theoremstyle{definition}
    \newtheorem{definition}[lem]{Definition}
    \newtheorem{construction}[lem]{Construction}
    \newtheorem{rem}[lem]{Remark}
    \newtheorem{example}[lem]{Example}
    \newcommand{\mf}[1]{\mathfrak{#1}}
    \newcommand{\mc}[1]{\mathcal{#1}}
    \newcommand{\mb}[1]{\mathbf{#1}}
    \newcommand{\msf}[1]{\mathsf{#1}}
    \newcommand{\ov}[1]{\overline{#1}}
    \newcommand{\op}{\operatorname}
    \DeclareMathOperator*{\colim}{colim}
    \DeclareMathOperator{\Hom}{Hom}
    \DeclareMathOperator{\Aut}{Aut}
    \DeclareMathOperator{\Spa}{Spa}
    \DeclareMathOperator{\interior}{int}
    \newcommand{\oc}{\mathrm{oc}}
    \newcommand{\et}{\mathrm{\acute{e}t}}
    \newcommand{\adm}{\mathrm{adm}}
    \newcommand{\proet}{\mathrm{pro\acute{e}t}}
    \newcommand{\profet}{\mathrm{prof\et}}
    \newcommand{\rmta}{\mathrm{ta}}
    \newcommand{\dJ}{\mathrm{dJ}}
    \newcommand{\alg}{\mathrm{alg}}
    \newcommand{\sep}{\mathrm{sep}}
    \newcommand{\cnts}{\mathrm{cnts}}
    \newcommand{\smallto}{\scaleto{(\to)}{5.5pt}}
    \newcommand{\cat}[1]{\operatorname{\mathbf{#1}}} % categories e.g. \cat{Cov}_X^\oc
    \newcommand{\GSet}[1]{{#1\text{-}\cat{Set}}}
    \newcommand{\pfset}[1]{{#1\text{-}\cat{pFSet}}}
    \newcommand{\Cov}{\cat{Cov}}
    \newcommand{\UCov}{\cat{UCov}}
    \newcommand{\Et}{\cat{\acute{E}t}}
    \newcommand{\FEt}{\cat{F\acute{E}t}}
    \newcommand{\UFEt}{\cat{UF\acute{E}t}}
    \newcommand{\ProEt}{\cat{Pro\acute{E}t}}
    \newcommand{\Sh}{\cat{Sh}}
    \newcommand{\Loc}{\cat{Loc}}
    \newcommand{\calF}{\mathcal{F}}
    \newcommand{\cO}{\mathcal{O}}
    \newcommand*\isomto{%
        \xrightarrow{\raisebox{-0.2 em}{\smash{\ensuremath{\sim}}}}%
    }
    \newcommand{\stacks}[1]{\cite[\href{https://stacks.math.columbia.edu/tag/#1}{Tag~#1}]{StacksProject}}
	\newcounter{steps}
    \title{Variants of the de Jong fundamental group} 
    \date{\today}
   \author{Piotr Achinger}
    \address{Institute of Mathematics of the Polish Academy of Sciences \newline \indent ul.\ Śniadeckich 8, 00-656 Warsaw, Poland}
    \email{pachinger@impan.pl}
    \author{Marcin Lara}
    \address{Institute of Mathematics of the Polish Academy of Sciences \newline \indent ul.\ Śniadeckich 8, 00-656 Warsaw, Poland}
    \email{marcin.lara@impan.pl}
    \author{Alex Youcis}
    \address{\begin{itemize} 
    \item[(1)] Institute of Mathematics of the Polish Academy of Sciences \newline \indent ul.\ Śniadeckich 8, 00-656 Warsaw, Poland
    \item[(2)] Graduate School of Mathematical Sciences, The University of Tokyo,
    3-8-1 Komaba, Meguro-ku, Tokyo, 153-8914, Japan
    \end{itemize}}
     \email{ayoucis@ms.u-tokyo.ac.jp}
\begin{document}

\begin{abstract}
For a rigid space $X$, we answer two questions of de~Jong about the category $\mathbf{Cov}^\mathrm{adm}_X$ of coverings which are locally in the admissible topology on $X$ the disjoint union of finite \'etale coverings: we show that this class is different from the one used by de~Jong, but still gives a tame infinite Galois category. In addition, we prove that the objects of $\mathbf{Cov}^\mathrm{et}_X$ (with the analogous definition) correspond precisely to locally constant sheaves for the pro-\'etale topology defined by Scholze.
\end{abstract}

\maketitle

\section{Introduction}

Let $K$ be a non-archimedean field. By a rigid $K$-space we shall mean an adic space locally of finite type over $K$. In \cite{ALY1P1}, we introduced a new definition of covering space in rigid-analytic geometry, called \emph{geometric coverings}, and showed that the category $\Cov_X$ of geometric coverings of a rigid $K$-space $X$ gives rise to a fundamental group $\pi^{\rm ga}_1(X, \ov{X})$, called the \emph{geometric arc fundamental group}. In this paper we use these ideas to explicate two previously studied objects: de Jong's notion of covering spaces, and Scholze's theory of pro-etale local systems.

\medskip

\paragraph{Extensions of de Jong covering spaces} The notion of `covering space' of a rigid $K$-space has been a notoriously difficult concept to fully encapsulate. It was realized early on that, unlike the case of schemes, even relatively well-behaved rigid spaces admit interesting connected covers of infinite degree. The most classical examples of this phenomenon are the Tate uniformization map $\mathbf{G}^\mathrm{an}_{m,K}\to E^\mathrm{an}$, for an elliptic curve $E$ over $K$ with split multiplicative reduction, and the Gross--Hopkins period map
\begin{equation*}
   \pi_\mathrm{GH}\colon \{x\in \mathbf{A}^{1,\mathrm{an}}_{\mathbf{C}_p}: |x|<1\}\to \mathbf{P}^{1,\mathrm{an}}_{\mathbf{C}_p}.
\end{equation*}
Building off ideas of Berkovich, in \cite{deJongFundamental} de Jong sought to define a notion of covering space which could encompass these examples but was robust enough to support a notion of fundamental group.

To define de Jong's coverings (and related variants), let us consider the full subcategories
\[ 
    \Cov^\tau_X (\subseteq \Cov_X) \subseteq \Et_X, \quad \tau\in\{\text{adm}, \text{\'et}, \text{oc}\}
\]
consisting of \'etale maps $Y\to X$ for which there exists a $\tau$-cover $U\to X$ such that $Y_U\to U$ is the disjoint union of finite \'etale coverings of $U$. Here $\adm,\et,$ or $\oc$ denotes the usual (i.e.\ `admissible'), \'etale, or overconvergent (i.e.\@ partially proper open) Grothendieck topology on $X$, respectively. Every geometric point $\ov{x}$ of $X$ furnishes these categories with a fiber functor 
\begin{equation*}
    F_{\ov{x}}\colon \Cov^\tau_X\to \cat{Set},\qquad F_{\ov{x}}(Y)=\Hom_X(\ov{x},Y).
\end{equation*}
The category $\Cov^{\rm oc}_X$, called the category of \emph{de Jong covering spaces}, is precisely the category considered in \cite{deJongFundamental}. One may think about the category $\Cov^\oc_X$ as being a synthesis of the notion of finite \'etale covering and topological covering (of the Berkovich space associated to $X$). 

The category $\Cov_X^\oc$ is shown in op.\@ cit.\@ to have favorable properties. First, it is shown that the natural infinite degree covering spaces mentioned above are examples of de Jong covering spaces. Significantly deeper, it is shown that if one sets $\UCov^\oc_X$  to be the category of arbitrary disjoint unions of de Jong covering spaces, then the pair $(\UCov^\oc_X,F_{\ov{x}})$ is a \emph{tame infinite Galois category} when $X$ is connected. The notion of a pair $(\mc{C},F)$ being a tame infinite Galois category was developed in \cite{BhattScholze} as a generalization of the classical theory of Galois categories. In particular, there is a topological group $\pi_1(\mc{C},F)$, called the \emph{fundamental group} of $(\mc{C},F)$, such that
\begin{equation*}
    F\colon \mc{C}\isomto \GSet{\pi_1(\mc{C},F)}
\end{equation*}
is an equivalence where $\GSet{\pi_1(\mc{C},F)}$ is the category of sets endowed with a continuous action of $\pi_1(\mc{C},F)$. The fundamental group of the pair $(\UCov^\oc_X,F_{\ov{x}})$ is called the \emph{de Jong fundamental group} and denoted $\pi_1^\dJ(X,\ov{x})$. 

Despite these positive aspects of the category $\Cov_X^\oc$, there is an obvious downside. Namely, it is not obvious whether the notion of a de Jong covering space is local on the target for the admissible topology. For this reason, in \cite{deJongFundamental} the following two questions are posed (using different language):
\begin{itemize}[leftmargin=1cm]
    \item Does the equality $\Cov^{\rm oc}_X=\Cov^{\rm adm}_X$ hold?
    \item If not, is $(\UCov^\adm_X,F_{\ov{x}})$ a tame infinite Galois category? 
\end{itemize}
We give a negative answer to the first question using an explicit construction relying on a careful analysis of Artin–Schreier coverings.

\begin{thmi}[{See Proposition \ref{prop:counterex}}]\label{intro-thm:counterexample}
    Let $K$ be a non-archimedean field of characteristic $p$ and let $X$ be an affinoid annulus over $K$. Then, the containment $\Cov_X^\oc\subseteq \Cov_X^\adm$ is strict.
\end{thmi}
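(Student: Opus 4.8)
The plan is to produce an explicit infinite-degree étale covering $Y\to X$ which is locally a disjoint union of finite \'etale coverings for the admissible topology but not for the overconvergent one. The point to exploit is that the admissible topology on an annulus has covers by quasi-compact rational subdomains --- in particular by sub-annuli from which a residue disk has been deleted --- whereas the overconvergent opens of $X$ are precisely the opens of the associated Berkovich annulus, hence are partially proper, and near the two boundary circles cannot be localised to quasi-compact pieces. In characteristic $p$, Artin--Schreier theory (write $\wp(g)=g^p-g$) lets one use this discrepancy, because a degree $p$ covering $y^p-y=f$ can become split after restriction to a small disk, or to a small loop around one, deleted from the base, while remaining connected over an honest concentric sub-annulus on which the class $[f]\in\mathcal{O}/\wp\mathcal{O}$ survives.

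Concretely, after rescaling assume $X=\{\rho\le|t|\le1\}$ with $\mathcal{O}(X)=K\langle t,\varpi/t\rangle$, $|\varpi|=\rho$. One fixes a finite admissible cover $\{U_i\}$ of $X$ by rational subdomains --- concentric sub-annuli with residue disks centred at points $a_i$ of the inner boundary circle $\{|t|=\rho\}$ deleted, together with the disks filling in those deletions, arranged cyclically so that the overlaps $U_i\cap U_j$ split into genuine concentric sub-annuli and small loops around the deleted disks. For each $i$ and each $n\in\mathbf{Z}$ one chooses an Artin--Schreier class $[f_{i,n}]\in\mathcal{O}(U_i)/\wp$ (built from functions such as $1/(t-a_i)$ and $1/t$) so that the corresponding degree $p$ covering $Z_{i,n}\to U_i$ is connected, and so that over each overlap the restricted coverings match after a shift of the index, the shifts being chosen so that the components $Z_{i,n}$ get re-glued into a single infinite chain. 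The resulting $Y$ is connected of infinite degree, while by construction $Y|_{U_i}=\coprod_n Z_{i,n}$ is a disjoint union of finite \'etale coverings; being admissibly locally such a disjoint union, $Y$ is in particular a geometric covering, so $Y\in\Cov^{\adm}_X$. As $Y$ is connected of infinite degree it is not a disjoint union of finite \'etale coverings of $X$.

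The substance of the argument is showing $Y\notin\Cov^{\oc}_X$. Suppose $\{V_j\}$ were an overconvergent cover of $X$ with each $Y|_{V_j}$ a disjoint union of finite \'etale coverings. Viewing the $V_j$ as opens of the Berkovich annulus: to cover the inner boundary circle some $V_j$ must be a partially proper open meeting $\{|t|=\rho\}$, and one shows that any such open must already be "radially fat" there --- whatever residue disks it can afford to omit near the boundary are harmless, since over the remaining region together with the disks it omits the covering $Y$ is forced to stay connected of infinite degree. The technical input is an Artin--Schreier injectivity computation with Laurent coefficients: the solution of $g^p-g=f$, if it converges on a concentric sub-annulus that cuts the base, already converges on the whole base, so the classes $[f_{i,n}]$ restrict non-trivially to every concentric sub-annulus and the re-gluing chain is not broken there. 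Hence $Y|_{V_j}$ remains connected of infinite degree for a suitable $j$, contradicting that it is a disjoint union of finite \'etale coverings.

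The main obstacle is precisely making this last step rigorous. One must engineer the deleted disks and the classes $[f_{i,n}]$ so that two competing demands hold at once: the re-gluing chain can be broken over the small loops around the deleted disks (needed for $Y|_{U_i}$ to be a disjoint union of finite \'etale coverings over the quasi-compact $U_i$), yet it cannot be broken over any genuine concentric sub-annulus, however thin, at any radius up to the boundary (needed so that the partial properness of overconvergent opens forces the contradiction); and one must verify that no choice of overconvergent cover evades the argument. Everything else --- that $Y$ is \'etale, of infinite degree, a geometric covering, and admissibly locally a disjoint union of finite \'etale coverings --- is routine from the construction.
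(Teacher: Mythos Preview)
Your proposal assembles the right ingredients—Artin--Schreier coverings, a shift-gluing producing an infinite chain, the ``fatness'' of overconvergent opens, and the Laurent-coefficient computation showing that splitting over a circle forces splitting over the enclosed disk—but the geometry you sketch differs from the paper's and, as written, does not close.

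The paper's construction is much simpler than your boundary-and-deleted-disks picture. It takes $X=\{|\varpi|\le|x|\le|\varpi|^{-1}\}$ with the two-piece admissible cover $U^\pm$ meeting along the \emph{interior} circle $C=\{|x|=1\}$, and over $U^+$ uses the coverings $y^p-y=\varpi^{a_n}x^{b_n}$ with $a_n/b_n>1$ and $a_n/b_n\to 1$ (and symmetrically over $U^-$ via $x\mapsto x^{-1}$). Each such covering splits completely over $C$, permitting the shift-gluing, yet is connected over $S(1+\tfrac{1}{m})$ whenever $a_n/b_n\le 1+\tfrac{1}{m}$. The obstruction is then run at the interior Gauss point $\eta\in C$: any overconvergent $V\ni\eta$ contains $S(1\pm\tfrac{1}{m})$ for some $m$, so for all $n$ beyond some $n_0$ the pieces $Y_n^\pm|_{V\cap U^\pm}$ are connected and successively linked through $C\cap V$, yielding a connected component of infinite degree in $Y|_V$.

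The gap in your version is the tension you yourself flag at the end but do not resolve. You assert that the classes $[f_{i,n}]$ restrict non-trivially to \emph{every} concentric sub-annulus, while also saying the overlaps $U_i\cap U_j$ contain concentric sub-annuli; then the $Z_{i,n}$ stay connected over those overlaps and no shift-gluing is possible there. What is actually needed is a \emph{sequence} of coverings whose splitting loci \emph{shrink} toward the gluing locus as $|n|\to\infty$: each individual $Y_n$ splits over a definite neighbourhood (so a shift can be imposed), but for any fixed overconvergent $V$ only finitely many of them split. Routing the chain through fixed loops around deleted disks at the boundary cannot achieve this, because an overconvergent open containing the boundary Gauss point $\eta_\rho$ may simply excise those loops (take a thin collar $\{\rho\le|t|<\rho'\}$ with small open disks around the $a_i$ removed), and over such an open your chain falls apart into its finite constituents. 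The paper avoids this by working at the interior Gauss point, where every overconvergent neighbourhood is forced to contain full circles on both sides, and by choosing the parameters $a_n,b_n$ so that the connectedness of $Y_n^\pm$ persists on circles arbitrarily close to $C$.
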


One may think the existence of such an example is a subtlety related to characteristic $p$ geometry. But, in \cite{Gaulhiac} Gaulhiac has cleverly adapted our construction to produce an analogous example in mixed characteristic. Note that we do not expect such examples to exist when $K$ is of equicharacteristic $0$. In fact, in \cite[Corollary 4.17]{ALY2}, we show that quite often the equality $\Cov_X^\oc=\Cov_X^\et$ holds in equicharacteristic 0.

In \cite{ALY1P1} we show that the pair $(\Cov_X,F_{\ov{x}})$ is a tame infinite Galois category. As the notion of geometric covering is \'etale local on the target, is closed under disjoint unions, and contains finite \'etale coverings we see that $\UCov^\tau_X$, with the obvious definition, is contained in $\Cov_X$. Combining these two results we answer de Jong's second question.

\begin{thmi}[See Theorem~\ref{taus are tame}]\label{thmi:taus are tame}
    Let $X$ be a connected rigid $K$-space with geometric point $\ov{x}$. For every $\tau\in\{\adm,\et,\oc\}$, the pair $(\UCov^\tau_X, F_{\ov x})$ is a tame infinite Galois category.
\end{thmi}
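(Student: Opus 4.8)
Recall from \cite{ALY1P1} that, $X$ being connected, $(\Cov_X, F_{\ov x})$ is a tame infinite Galois category; in particular $F_{\ov x}$ realises an equivalence $\Cov_X \isomto \GSet{G}$ for the Noohi group $G := \pi_1^{\mathrm{ga}}(X,\ov x)$. As observed in the introduction, $\UCov^\tau_X$ is a \emph{full} subcategory of $\Cov_X$, and it contains the (connected) terminal object $X$. The plan is to show that $\UCov^\tau_X$ is, in addition, stable inside $\Cov_X$ under a short list of categorical operations; granting this, a purely formal argument (extracting the relevant part of the Bhatt--Scholze formalism, \cite{BhattScholze}) yields the Bhatt--Scholze axioms for $(\UCov^\tau_X, F_{\ov x})$ and identifies its fundamental group with a topological quotient of $G$. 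Only the stability statements require $\tau$-specific input.

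So I would verify the following. \textbf{(a)} \emph{Stability under arbitrary coproducts} --- immediate, as $\UCov^\tau_X$ is by definition the category of arbitrary disjoint unions of objects of $\Cov^\tau_X$. \textbf{(b)} \emph{Stability under fibre products over $X$} (hence under all finite limits): if $(Y_i)_{U_i}\to U_i$ is a disjoint union of finite \'etale coverings for a $\tau$-cover $U_i\to X$ ($i=1,2$), then $U:=U_1\times_X U_2\to X$ is again a $\tau$-cover and $(Y_1\times_X Y_2)_U=(Y_1)_U\times_U(Y_2)_U$ is a fibre product of two disjoint unions of finite \'etale coverings of $U$, hence again one. \textbf{(c)} \emph{Stability under direct summands}: if $Y\to X$ lies in $\Cov^\tau_X$, trivialised by $U\to X$, and $Z\subseteq Y$ is open and closed, then $Z_U\subseteq Y_U$ is open and closed, hence a sub-disjoint-union of the finite \'etale pieces of $Y_U$; since an open-and-closed subspace of a finite \'etale covering of $U$ is itself finite \'etale over $U$, we get $Z\to X\in\Cov^\tau_X$, and in particular every connected component of an object of $\UCov^\tau_X$ again lies in $\UCov^\tau_X$. \textbf{(d)} \emph{Stability under quotients}: using (a), (c) and the equivalence with $\GSet{G}$, this reduces to the case of a surjection $A\to B$ with $A\to X$ in $\Cov^\tau_X$ and $B\to X$ a connected covering; picking a $\tau$-cover $U\to X$ trivialising $A$, the map $A_U=\bigsqcup_a V_a\to B_U$ is surjective, each $V_a\to B_U$ is finite \'etale onto an open-and-closed subspace $W_a\subseteq B_U$ (because $V_a\to U$ is finite and $B_U\to U$ is separated and \'etale), and hence the connected components of $B_U=\bigcup_a W_a$ are open-and-closed and finite \'etale over $U$, so $B_U$ is a disjoint union of finite \'etale coverings of $U$.

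Granting (a)--(d): the category $\UCov^\tau_X$ has all finite limits (by (b) and the terminal object $X$) and all colimits (coproducts by (a); coequalisers because a coequaliser in $\Cov_X$ decomposes into connected components, each a quotient of a summand of the source, hence lands back in $\UCov^\tau_X$ by (a), (c), (d)), all computed as in $\Cov_X$. Therefore $F_{\ov x}|_{\UCov^\tau_X}$ remains faithful, conservative (iso in $\Cov_X$ between objects of a full subcategory is iso there) and compatible with finite limits and colimits; every object is the disjoint union of its connected components, which lie in $\UCov^\tau_X$ by (c); and since $\UCov^\tau_X\hookrightarrow\Cov_X$ is full and preserves the initial object and coproducts, connectedness has the same meaning on both sides and $\Aut(F_{\ov x})$ acts on the fibre of a connected object of $\UCov^\tau_X$ through the same (transitive) action as in $\Cov_X$. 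This exhibits $(\UCov^\tau_X, F_{\ov x})$ as a tame infinite Galois category. (Concretely, under $\Cov_X\simeq\GSet{G}$ it corresponds to the full subcategory of $G$-sets whose stabilisers lie in a fixed conjugation-stable family $\mathcal U$ of open subgroups closed under finite intersection and enlargement, and $\pi_1(\UCov^\tau_X,F_{\ov x})=G\big/\bigcap_{U\in\mathcal U}U$.)

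\textbf{Main obstacle.} Parts (a), (b) are formal descent; the content is (c) and especially (d), which rest on the geometry of finite \'etale morphisms of rigid/adic spaces --- that an open-and-closed subspace of a finite \'etale covering is finite \'etale; that, given finite \'etale $V\to U$, separated \'etale $B\to U$ and a $U$-morphism $V\to B$, the image of $V$ is open-and-closed in $B$ and finite \'etale over $U$; and that a union of such subspaces is locally connected, so that one may pass to connected components --- together with checking that the fibre products of $\tau$-covers used in (b) remain $\tau$-covers. A secondary point, relevant only for the explicit computation of $\pi_1$, is to verify that $\mathcal U$ consists of \emph{all} open subgroups of $G$ containing $\bigcap_{U\in\mathcal U}U$, so that the quotient topology on $G\big/\bigcap_{U\in\mathcal U}U$ matches $\mathcal U$.
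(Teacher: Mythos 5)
Your proposal is correct and follows essentially the same route as the paper: both exhibit $\UCov^\tau_X$ as a full subcategory of the tame infinite Galois category $(\Cov_X,F_{\ov x})$ from \cite{ALY1P1} that is closed under coproducts, fibre products, clopen summands and quotients, with the substantive checks reduced to $\UFEt_U$ over a trivializing $\tau$-cover (the paper merely packages the formal part as Lemma~\ref{subcategory is tame} and quotes \cite[Proposition~5.4.5]{ALY1P1} for the agreement of categorical and topological connectedness). The geometric facts you flag as the ``main obstacle'' in steps (c) and (d) are exactly the ones the paper also relies on, handled there by the same reduction to disjoint unions of finite \'etale covers.
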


\noindent 
Consequently, there is a topological group $\pi_1^{\dJ,\tau}(X,\ov{x})$, which we call the \emph{$\tau$-adapted de Jong fundamental group}, and an equivalence of categories
\[ 
    F_{\ov x} \colon \UCov_X^\tau \isomto \GSet{\pi_1^{\dJ,\tau}(X,\ov{x})}.
\]
As $\pi_1^{\dJ,\oc}(X,\ov{x})=\pi_1^\dJ(X,\ov{x})$ this notion extends that of the de Jong fundamental group.

\paragraph{Local systems for the pro-\'etale topology} In the second part of this paper, we show the largest of these three categories $\Cov^\et_X$ is not just of purely theoretical interest and connects to previously studied objects. Recall that in \cite{Scholzepadic}, Scholze introduced a site $X_\proet$ for a rigid $K$-space $X$, called there the \emph{pro-\'etale topology}. Its covers are roughly an \'etale cover of $X$ followed by an inverse limit of finite \'etale covers. 

As local systems are usually analyzed using fundamental group techniques, it natural to ask the following question: if $X$ is connected, is the category $\Loc(X_\proet)$ of locally constant sheaves of sets for the pro-\'etale topology, endowed with a fiber functor induced by a geometric point $\ov x$, a tame infinite Galois category? 

\begin{thmi}[See Theorem \ref{main locally constant theorem}]
    The functor associating to a geometric covering $Y\to X$ the corresponding sheaf on the pro-\'etale site $X_\proet$ induces an equivalence of categories 
    \[ 
        \Cov^\et_X \isomto \cat{Loc}(X_\proet).
    \]
\end{thmi}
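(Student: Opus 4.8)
The plan is to construct the functor in both directions and check they are mutually inverse, using the key input that $(\Cov_X, F_{\ov x})$ is a tame infinite Galois category (hence $\Cov_X \simeq \GSet{\pi}$ for $\pi = \pi_1^{\rm ga}(X,\ov X)$) together with the fact that $\Cov^\et_X$ is the full subcategory of $\Cov_X$ of objects that become a disjoint union of finite étale coverings after an étale cover. First I would recall from Scholze's work the functor $\Cov_X \to \Sh(X_\proet)$ sending an étale map $Y \to X$ to the sheaf it represents; since geometric coverings are étale and the pro-étale topology is finer than the étale one, this lands in sheaves, and it is fully faithful because $\Hom_X(Y,Y')$ can be computed pro-étale locally. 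The real content is to identify the essential image of $\Cov^\et_X$ with $\cat{Loc}(X_\proet)$.

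For the forward inclusion — a geometric covering in $\Cov^\et_X$ gives a pro-étale locally constant sheaf — I would argue as follows. By definition there is an étale cover $U \to X$ with $Y_U \to U$ a disjoint union of finite étale coverings; each finite étale covering of $U$ is, pro-étale locally on $U$, a disjoint union of copies of $U$ (this is the standard fact that finite étale maps are split by a pro-étale cover, as in the construction of the pro-étale site), so $Y_U$, and hence $Y$, is locally constant for $X_\proet$. For the reverse inclusion — a pro-étale locally constant sheaf comes from an object of $\Cov^\et_X$ — I would take $\calF \in \cat{Loc}(X_\proet)$, choose a pro-étale cover trivializing it, and refine it to a cover of the form "an étale cover $U \to X$ followed by an inverse limit $\varprojlim U_i \to U$ of finite étale coverings" using the explicit description of pro-étale covers; restricting the trivializing data to a single finite level $U_i$ exhibits $\calF|_U$ as represented by a disjoint union of finite étale coverings of $U$, and then one descends along $U \to X$ (using that $\calF$ itself, being a sheaf on $X_\proet$, already carries the descent datum, and that the associated space is étale over $X$, hence a geometric covering by the étale-local nature of $\Cov_X$). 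One must also check the representing object is honestly a geometric covering, i.e.\ lies in $\Cov_X$ and not just in $\Et_X$; this follows because $\Cov_X$ is closed under disjoint unions and étale-local on the target, and finite étale coverings lie in $\Cov_X$.

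The two functors are inverse to each other essentially by construction: starting from $Y \in \Cov^\et_X$, the sheaf it represents is trivialized by the same étale-then-pro-finite-étale cover that trivializes $Y$, and the "space" reconstructed from that sheaf is $Y$ again by the fully faithfulness of $\Cov_X \to \Sh(X_\proet)$; conversely, a locally constant sheaf is recovered from its representing geometric covering because representability can be checked pro-étale locally, where everything is a disjoint union of copies of the base. I would phrase the final identification cleanly in terms of the equivalence $\Cov_X \simeq \GSet{\pi}$: under it, $\Cov^\et_X$ corresponds to those $\pi$-sets on which an open subgroup "of finite-étale type" acts through a finite quotient after restriction, and one checks this matches the group-theoretic description of $\cat{Loc}(X_\proet)$ coming from Scholze's comparison of the pro-étale fundamental group with geometric coverings.

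The main obstacle I expect is the reverse inclusion, specifically showing that a pro-étale-locally constant sheaf is already trivialized by a cover of the restricted shape (étale, then a single inverse system of finite étale maps) and that the resulting representing object is étale — genuinely a geometric covering — over $X$ rather than merely a pro-étale sheaf with no underlying space; this requires unwinding the definition of $X_\proet$-covers carefully and invoking that $\Cov_X$ is étale-local on the target together with the descent of finite étale maps, so that the spreading-out-to-finite-level argument produces an object of $\Et_X$ that one then recognizes as lying in $\Cov_X$.
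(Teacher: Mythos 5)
Your outline has the right overall shape (forward direction via splitting finite \'etale covers pro-\'etale-locally, reverse direction via spreading out a trivialization and descending along an \'etale cover, full faithfulness from a ``Hom is local'' principle), and you correctly flag the reverse inclusion as the hard part. But two steps you treat as routine are genuine gaps, and they are exactly where the paper spends its effort. First, the functor is \emph{not} ``the sheaf $Y$ represents'': the site $X_\proet$ is not subcanonical, and the paper gives an explicit example of an object $Y$ of $\Et_X$ whose representable presheaf on $X_\proet$ violates the sheaf condition (the underlying category contains genuine pro-objects, so ``the topology is finer than \'etale'' does not save you). One must sheafify, $Y\mapsto h^\sharp_{Y,\proet}$, and then full faithfulness is no longer automatic; the paper proves that sheafification does not change values on the basis of qcqs objects (Proposition \ref{prop:almost-representable-presheaf}), identifies the functor with $\nu_X^\ast$, and deduces full faithfulness from the theory of classical sheaves (Proposition \ref{classical corollary}). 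Your one-line justification of full faithfulness presupposes precisely what has to be proved here.

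Second, the step ``restricting the trivializing data to a single finite level $U_i$ exhibits $\mc{F}|_U$ as represented by a disjoint union of finite \'etale coverings'' does not work as stated when the value set $S$ is infinite: the descent cocycle lives on $\varprojlim_i(U_i\times_U U_i)$ with values in $\Aut(S)$, which is not profinite, so there is no obvious finite level to restrict to. The paper's resolution is to pass through the profinite \'etale site: it proves $\Loc(\pfset{G})\simeq\GSet{G}$ for profinite $G$ (Proposition \ref{locally constant on profinite g-sets}), where the continuity of the $G$-action on $T=\mc{F}(G)$ is extracted from a ``classicality'' statement (values on pro-objects are filtered colimits of values at finite levels), transports this along $X_\profet\simeq\pfset{\pi_1^\alg(X,\ov{x})}$, and then compares $\Sh(X_\profet)$ with $\Sh(X_\proet)$ via a fully faithful pullback $\theta^\ast$ (Proposition \ref{prop:unit-isomorphism}), whose proof again needs the coherence of both topoi and the subcanonicality of $X_\proet^{\mathrm{qcqs}}$. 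These colimit-comparison lemmas are the technical heart of the argument and are absent from your proposal. The final globalization step (both sides are \'etale stacks, the source by a result of Warner) matches what you describe and is fine.
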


Considering Theorem \ref{thmi:taus are tame}, we may give a precise answer to our question. While $(\Loc(X_\proet),F_{\ov{x}})$ is not a tame infinite Galois category, as it is not closed under coproducts, if we set $\cat{ULoc}(X_\proet)$ to be the category of disjoint unions of objects of $\Loc(X_\proet)$ then the pair $(\cat{ULoc}(X_\proet),F_{\ov{x}})$ is a tame infinite Galois category. Moreover, the fundamental group of $(\cat{ULoc}(X_\proet),F_{\ov{x}})$ is identified with $\pi_1^{\dJ,\et}(X,\ov{x})$ and so one has an equivalence
\begin{equation*}
    F_{\ov{x}}\colon \cat{ULoc}(X_\proet)\isomto \GSet{\pi_1^{\dJ,\et}(X,\ov{x})}
\end{equation*}

\subsection*{Acknowledgements}
The authors would like to thank Antoine Ducros, Ofer Gabber, David Hansen, Johan de~Jong, Emmanuel Lepage, Shizhang Li, Jérôme Poineau, and Peter Scholze for useful conversations and comments on the preliminary draft of this manuscript. This work is a part of the project KAPIBARA supported by the funding from the European Research Council (ERC) under the European Union’s Horizon 2020 research and innovation programme (grant agreement No 802787). 

\subsection*{Notation and conventions} Throughout this article $K$ shall be a non-archimedean field. We shall use freely the material from \cite{Huberbook} and \cite{FujiwaraKato}. We follow the conventions and notations concerning valuative spaces, adic spaces, and rigid $K$-spaces set out in \cite[\S2-3]{ALY1P1}. In particular, we shall use the universal separated quotient $[X]$ of a taut rigid $K$-space $X$, which coincides with the associated Berkovich space $X^\mathrm{Berk}$.

\section{A non-overconvergent covering space}
\label{example subsection}

In this section we construct an example of a morphism $Y\to X$ that belongs to $\Cov^\adm_X$ but not $\Cov^\oc_X$. For $X$ we take an annulus over a characteristic $p$ non-archimedean field, and $Y \to X$ is constructed by carefully gluing together finite \'etale covers of a neighborhood of the Gauss point which maximally extend over shrinking overconvergent neighborhoods. After that, we establish a result which, in some sense, implies that every element of $\Cov^\adm_X$ not in $\Cov^\oc_X$ must come from such a gluing construction.

\subsection{The example} Let $K$ be a non-archimedean field of characteristic $p>0$ and let $\varpi \in K$ be a pseudouniformizer. Set 
\[
    X=\{|\varpi|\leq |x|\leq |\varpi|^{-1}\} \subseteq \mb{A}^{1, \rm an}_K.
\]
Our example $Y\to X$ is obtained by gluing two families $Y^\pm_n$ of Artin--Schreier coverings of two annuli 
\[
    U^-=\{|\varpi|\leq |x|\leq 1\},\qquad U^+=\{1\leq |x|\leq|\varpi|^{-1}\}
\]
which split over shrinking overconvergent neighborhoods of the intersection
\begin{equation*}
    C=U^-\cap U^+=\{|x|=1\}.
\end{equation*} 

We begin with an analysis of Artin--Schreier coverings of $\mathbf{A}^{1, \rm an}_K$. For a rational number $\alpha$, we define the affinoid opens
\[ 
    D(\alpha)= \{ |x|\leq |\varpi|^{-\alpha} \} 
    \quad \supseteq \quad 
    S(\alpha) = \{|x|=|\varpi|^{-\alpha}\}.
\]
For integers $a,b$ with $b>0$, we denote by $Y_{a,b}$ the following Artin--Schreier covering of $\mathbf{A}^{1, \rm an}_K$:
\[ 
    Y_{a,b} = \left(\op{Spec} K[x,y]/(y^p - y - \varpi^{a}x^{b})\right)^{\rm an}.
\]
It is a $\mathbf{Z}/p\mathbf{Z}$-torsor over $\mathbf{A}^{1, \rm an}_K$, and since $\mathbf{Z}/p\mathbf{Z}$ does not have any non-trivial subgroups, the restriction of $Y_{a,b}$ to an affinoid subdomain $U\subseteq \mathbf{A}^{1, \rm an}_K$ is disconnected if and only if it splits completely over $U$ and if and only if the equation $y^p-y=\varpi^{a}x^b$ has a solution in $\cO(U)$. 

\begin{lem} \label{lem:AS-splitting}
    Let $a, b$ be integers with $b>0$ and let $\alpha$ be a rational number. The following are equivalent: 
    \begin{enumerate}[(a)]
        \item The covering $Y_{a,b}$ splits over $D(\alpha)$.
        \item The covering $Y_{a,b}$ splits over $S(\alpha)$.
        \item We have $\alpha <  a/ b$.
    \end{enumerate}
\end{lem}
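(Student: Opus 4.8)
The plan is to reduce everything, via the equivalence recorded just before the statement, to the solvability of the Artin--Schreier equation $\wp(y):=y^p-y=\varpi^a x^b$ in the Banach $K$-algebras $\mathcal{O}(D(\alpha))$ and $\mathcal{O}(S(\alpha))$, and then to prove the cycle of implications $(c)\Rightarrow(a)\Rightarrow(b)\Rightarrow(c)$. The inputs about these rings that I would use are standard: the spectral norm on $\mathcal{O}(D(\alpha))$ is power-multiplicative with $\|x\|=|\varpi|^{-\alpha}$ (the supremum being attained along $S(\alpha)$), so $\|\varpi^a x^b\|_{D(\alpha)}=|\varpi|^{a-\alpha b}$; and $\mathcal{O}(S(\alpha))$ is the ring of Laurent series $\sum_{n\in\mathbf{Z}}c_n x^n$ with $|c_n|\,|\varpi|^{-\alpha n}\to 0$ as $n\to\pm\infty$, such an expansion being unique.

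For $(c)\Rightarrow(a)$: assuming $\alpha<a/b$, hence $a-\alpha b>0$ (as $b>0$), the element $f:=\varpi^a x^b$ satisfies $\|f\|_{D(\alpha)}=|\varpi|^{a-\alpha b}<1$, so $y:=-\sum_{k\geq 0}f^{p^k}$ converges in $\mathcal{O}(D(\alpha))$ because $\|f^{p^k}\|=\|f\|^{p^k}\to 0$; additivity of Frobenius in characteristic $p$ gives $y^p=-\sum_{k\geq 1}f^{p^k}$, hence $\wp(y)=f$, and $Y_{a,b}$ splits over $D(\alpha)$. The implication $(a)\Rightarrow(b)$ is immediate, since $S(\alpha)\subseteq D(\alpha)$ is an affinoid subdomain and a solution restricts to a solution.

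The substantive direction is $(b)\Rightarrow(c)$, which I would prove contrapositively. Suppose $\alpha\geq a/b$, i.e. $a-\alpha b\leq 0$, and that $g=\sum_n c_n x^n\in\mathcal{O}(S(\alpha))$ solves $\wp(g)=\varpi^a x^b$. Since $g^p=\sum_n c_n^p x^{pn}$ — this uses additivity of Frobenius together with its continuity, as $\|g^p-h^p\|=\|g-h\|^p$ — comparing coefficients of $x^m$ shows $c_m=0$ when $p\nmid m$ and $m\neq b$, and $c_m=c_{m/p}^p$ when $p\mid m$ and $m\neq b$. Writing $b=p^s b'$ with $p\nmid b'$, a descending induction along $b',\,pb',\,\dots,\,p^s b'=b$ then yields $c_{b'}=\dots=c_{p^{s-1}b'}=0$, next $c_b=-\varpi^a$ from the coefficient of $x^b$, and finally $c_{p^k b}=c_{p^{k-1}b}^p=-\varpi^{a p^k}$ for all $k\geq 0$. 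But then $|c_{p^k b}|\,|\varpi|^{-\alpha p^k b}=|\varpi|^{p^k(a-\alpha b)}\geq 1$ for every $k$, contradicting $|c_n|\,|\varpi|^{-\alpha n}\to 0$ along the sequence $n=p^k b\to\infty$. This closes the cycle.

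The one genuinely delicate point is the coefficient bookkeeping in $(b)\Rightarrow(c)$: one must justify the term-by-term formula for $g^p$ in the Laurent ring, invoke uniqueness of Laurent expansions on the annulus, and run the descent to the prime-to-$p$ part of $b$ in order to pin down $c_b$. Everything else — power-multiplicativity of the spectral norm on a disc, the explicit description of $\mathcal{O}(S(\alpha))$, and the value $\|x\|_{D(\alpha)}=|\varpi|^{-\alpha}$ — is routine and valid for any $\alpha\in\mathbf{Q}$ (if one wishes, one may enlarge $K$ so that $|\varpi|^{-\alpha}\in|K^\times|$, but this is not needed).
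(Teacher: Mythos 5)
Your proof is correct. The easy directions coincide with the paper's: both produce the explicit solution $\pm\sum_k(\varpi^a x^b)^{p^k}$, whose convergence on $D(\alpha)$ is governed by $\|\varpi^a x^b\|=|\varpi|^{a-\alpha b}<1$, and both treat $(a)\Rightarrow(b)$ as restriction. Where you genuinely diverge is in the hard direction and in the shape of the implication cycle. The paper proves $(b)\Rightarrow(a)$: expanding a putative solution on $S(\alpha)$ as a Laurent series and comparing coefficients of $y^p-y$ in negative degrees, it shows all negative coefficients vanish, so the solution already lives on $D(\alpha)$, and then $(a)\Leftrightarrow(c)$ is read off from the convergence of the canonical power-series solution. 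You instead prove $(b)\Rightarrow(c)$ directly: the same coefficient comparison, but run along the positive chain $b', pb',\dots,b, pb, p^2b,\dots$, pins down $c_{p^kb}=-\varpi^{ap^k}$ and contradicts the decay condition on $S(\alpha)$ when $\alpha\geq a/b$. Both computations are about equally long and both are complete (your induction on the prime-to-$p$ part of $b$ is handled correctly, including the degenerate case $p\nmid b$, and the term-by-term formula for $g^p$ is justified as you indicate). The paper's route has the mild conceptual advantage that its hard step is an extension-across-the-disc statement, which is the form that generalizes to arbitrary finite \'etale covers of $D(\alpha)$ (as noted in the remark following the lemma); your route is more self-contained for this specific Artin--Schreier family and makes the failure of the decay condition completely explicit.
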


\begin{proof}
Write $g=\varpi^{a}x^b$. The unique solution to $y^p - y = -g$ in $K[\![x]\!]$ satisfying $y(0)=0$ is the power series
\[ 
	f = g + g^p + g^{p^2} + \cdots = \sum_{s\geq 1} \varpi^{p^s a} x^{p^s b}.
\]
It converges on $D(\alpha)$ if and only if $\alpha < a/b$, as $\cO(D(\alpha))$ consists of power series $\sum a_n x^n$ with $|a_n|\cdot |\varpi|^{-n\alpha } \to 0$. Therefore, (a) is equivalent to (c). 

Since clearly (a) implies (b), it remains to show (b) implies (a). The ring $\cO(S(\alpha))$ consists of Laurent series $f = \sum_{n\in\mathbf{Z}} a_n x^n$ with $|a_n|\cdot |\varpi|^{-n\alpha}\to 0$ as $|n|\to \infty$. It suffices to show that if $f^p - f \in \cO(D(\alpha))$, then $f\in \cO(D(\alpha))$ (i.e.\ $a_n=0$ for $n<0$). We have $f^p - f$ is equal to $\sum_{n\in \mathbf{Z}} (a_{n/p}^p - a_n)x^n$ where we set $a_{n/p}=0$ if $p$ does not divide $n$. Since $f^p-f\in \cO(D(\alpha))$, we have $a_{n/p}^p - a_n = 0$ for $n<0$. By induction we see that $a_n=0$ for all $n<0$.
\end{proof}

\begin{rem}
The equivalence of (a) and (b) holds more generally for every finite \'etale cover of $D(\alpha)$, by  \cite[proof of Proposition~7.5]{deJongFundamental}.
\end{rem}

\begin{construction} \label{cons:non-pp-cov}
Fix two sequences of positive integers $(a_n)_{n\in\mathbf{Z}}$ and $(b_n)_{n\in\mathbf{Z}}$ such that $a_n/b_n>1$ for all $n\in\mathbf{Z}$ and $\lim_{|n|\to\infty} a_n/b_n = 1$. For $n\in\mathbf{Z}$, we set $Y_n$ to be the restriction of the Artin--Schreier covering $Y_{a_n,b_n}$ to $U^+$. By Lemma~\ref{lem:AS-splitting}, every $Y_n$ splits completely over $C=S(1)$, while for every $m>0$ its restriction to $D(1+\frac{1}{m})\cap U^+$ and $S(1+\frac{1}{m})$ is connected for $|n|\gg 0$. We set $Y^+ = \coprod_{n\in\mathbf{Z}} Y^+_n$. 

The automorphism $x\mapsto x^{-1}$ of $X$ induces an isomorphism $i\colon U^-\isomto U^+$. We let $Y^- = \coprod_{n\in\mathbf{Z}} Y^-_n$ be the pullback of $Y^+\to U^+$ under the map $i$. The restriction of $Y^-_n$ to $S(1-\frac{1}{m})$ is thus connected for $|n|\gg 0$, while $Y^-_n$ splits completely over $C$ for all $n$.

Label the irreducible components of $Y^\pm_n\times_{U^\pm} C$ by 
\[
    Z^\pm_{np}, \quad Z^\pm_{np+1}, \quad \ldots, \quad Z^\pm_{np+p-1} 
\]
(in any order); every $Z^\pm_m$ maps isomorphically onto $C$. Identify 
\[
    Y^+_n\times_{U^+} C = \coprod_{m\in\mathbf{Z}} Z^+_m
    \quad \text{with} \quad
    Y^-_n\times_{U^-} C = \coprod_{m\in\mathbf{Z}} Z^-_m
\]
by identifying $Z^+_m$ with $Z^-_{m-1}$ for all $m\in \mathbf{Z}$ (see Figure~\ref{fig:gluing}). This defines an \'etale morphism $Y\to X$ whose restriction to $U^+$ (resp.\ $U^-$) is $Y^+$ (resp.\ $Y^-$), in particular $Y\to X$ it is an object of $\Cov_X^\adm$.
\end{construction}

\begin{figure}
    \centering
    \includegraphics[{width=0.6\textwidth}]{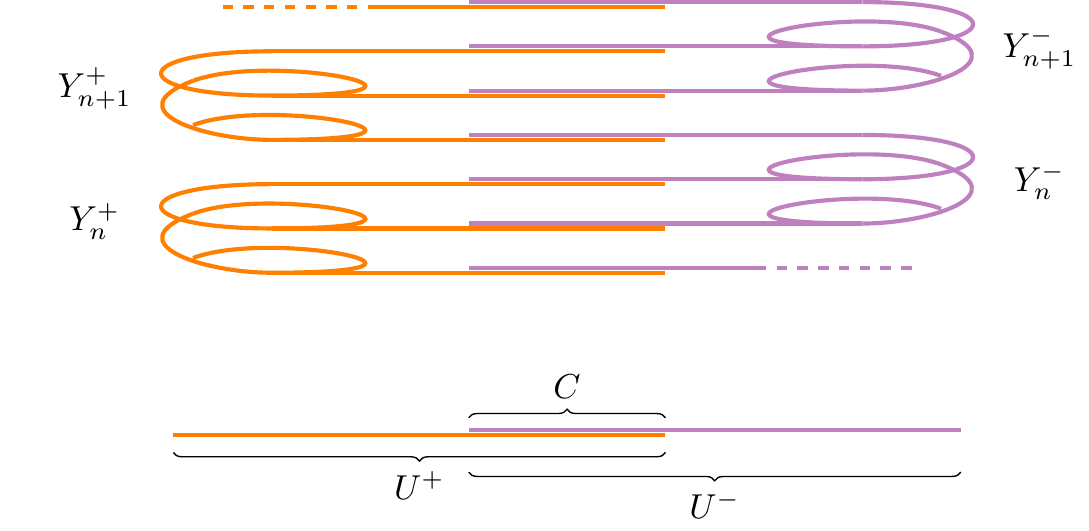}
    \caption{Construction of the covering $Y\to X$ (for $p=3$).}
    \label{fig:gluing}
\end{figure}

\begin{prop} \label{prop:counterex}
	The map $Y\to X$ does not belong to $\Cov^\oc_X$.
\end{prop}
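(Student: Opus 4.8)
The plan is to argue by contradiction: suppose $Y\to X$ belongs to $\Cov^\oc_X$, which means there is an overconvergent (partially proper open) cover $\{V_i\to X\}$ such that $Y_{V_i}\to V_i$ is a disjoint union of finite \'etale coverings for each $i$. The key point is to look at what happens near the Gauss point $\xi$ of the circle $C=\{|x|=1\}$, i.e.\ the point of $X$ corresponding to the supremum norm on $\mc{O}(C)$. Some $V_i$ contains $\xi$, and since $V_i$ is open and partially proper (overconvergent), it contains an overconvergent neighborhood of $\xi$ in $X$; concretely, $V_i$ must contain a ``thickened circle'' $W_m := D(1+\tfrac1m)\cap \{|\varpi|^{1+1/m}\le |x|\}$ — more precisely the open annulus $\{|\varpi|^{1+1/m}< |x|<|\varpi|^{-(1+1/m)}\}$ or at least some affinoid annulus $A_\epsilon = \{|\varpi|^\epsilon \le |x|\le |\varpi|^{-\epsilon}\}$ with $\epsilon>0$ — by the structure of overconvergent opens and the fact that $\xi$ is the unique point in the closure of $C$ forced to lie in any overconvergent neighborhood. (One should phrase this using the universal separated quotient $[X]$ / Berkovich space: $\xi$ has a fundamental system of neighborhoods that are annuli $A_\epsilon$, and any partially proper open containing $\xi$ contains one of these.)

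Next I would derive the contradiction from the assumption that $Y_{A_\epsilon}\to A_\epsilon$ is a disjoint union of finite \'etale covers. Restrict further to $S(\epsilon) = \{|x|=|\varpi|^{-\epsilon}\}$, a ``generic'' circle on the $U^+$ side with $\epsilon>0$ small. Then $Y_{S(\epsilon)} = Y^+_{S(\epsilon)} = \coprod_n (Y_{a_n,b_n})_{S(\epsilon)}$, and by Lemma~\ref{lem:AS-splitting} the cover $(Y_{a_n,b_n})_{S(\epsilon)}$ is connected (of degree $p$) precisely when $\epsilon \ge a_n/b_n$, equivalently — wait, Lemma~\ref{lem:AS-splitting} says it splits iff $\epsilon < a_n/b_n$, so it is connected iff $\epsilon\ge a_n/b_n$. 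Since $a_n/b_n\to 1$, for any fixed $\epsilon<1$ there are only finitely many $n$ with $a_n/b_n\le\epsilon$, so actually $Y_{S(\epsilon)}$ has all but finitely many components connected of degree $p$ — fine, that's consistent with being a disjoint union of finite \'etale covers on $S(\epsilon)$ alone. The real obstruction must come from the \emph{gluing}: the identification $Z^+_m \leftrightarrow Z^-_{m-1}$ along $C$ means that a connected finite \'etale piece over $A_\epsilon$, restricted to $C$, would have to be simultaneously (a) a union of components of some $Y^+_n\times C$ and (b) a union of components of some $Y^-_{n'}\times C$, but the ``shift by one'' makes no finite union of the $Z^\pm_m$ closed under both groupings unless it is everything or nothing. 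So I would show: any connected component $Y'$ of $Y_{A_\epsilon}$, being finite \'etale over the connected affinoid $A_\epsilon$, has $Y'\times_X C$ a \emph{finite} disjoint union of the $Z_m$'s; on the $U^+$ side $Y'\cap U^+$ is a finite \'etale cover of $A_\epsilon\cap U^+$, hence (being inside $Y^+=\coprod Y^+_n$ which over $A_\epsilon \cap U^+$ still has infinitely many connected components but each finite \'etale) a finite union of the $Y^+_n|_{A_\epsilon\cap U^+}$; its fiber over $C$ is then $\coprod_{n\in F^+} (Y^+_n\times C) = \coprod_{m\in M^+} Z^+_m$ for a finite set $M^+$ that is a union of length-$p$ blocks $\{np,\dots,np+p-1\}$. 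Symmetrically $M^- := \{m : Z^-_m\subseteq Y'\times C\}$ is a finite union of length-$p$ blocks. But $M^+$ and $M^- $ are related by $M^+ = M^- + 1$ (from $Z^+_m = Z^-_{m-1}$), so $M^+$ is simultaneously a union of blocks $[np,np+p-1]$ and a union of shifted blocks $[np+1,np+p]$; the only finite subsets of $\mathbf Z$ with this property are $\emptyset$ (and the only infinite one is $\mathbf Z$). Hence $Y'\times C=\emptyset$, so $Y'=\emptyset$, contradiction — $Y_{A_\epsilon}$ cannot be a disjoint union of finite \'etale covers, so $Y\notin\Cov^\oc_X$.

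I expect the main obstacle to be the first step: rigorously pinning down that every overconvergent open cover of $X$ has a member containing a full annular neighborhood $A_\epsilon$ of the Gauss point $\xi$ of $C$. This requires the basic theory of partially proper opens and the description of the Berkovich space $[X]$ near $\xi$ (the branch of the ``tree'' along $C$): the point $\xi$ specializes only within $C$ in a controlled way, and a partially proper open containing $\xi$ is a union of rational subdomains whose closures it contains, forcing it to contain $\{|\varpi|^\epsilon\le|x|\le|\varpi|^{-\epsilon}\}$ for some $\epsilon>0$. A clean way is: if $V\subseteq X$ is a partially proper open with $\xi\in V$, then since $\{\xi\}$ is the intersection $\bigcap_\epsilon A_\epsilon$ in the spectral space $X$ and $A_\epsilon$ is quasicompact, quasicompactness of $X\setminus V$ (it is closed in the spectral space $X$, hence quasicompact) together with $\xi\notin X\setminus V$ forces $A_\epsilon\cap(X\setminus V)=\emptyset$ for some $\epsilon$, i.e.\ $A_\epsilon\subseteq V$. (One should double check $A_\epsilon$ form a cofinal system of quasicompact neighborhoods of $\xi$ — this is standard for the Gauss point of an annulus.) The combinatorial heart (the ``shift by one'' obstruction) is then elementary, and the finiteness of the blocks $M^\pm$ follows from $Y'$ being finite over the affinoid $A_\epsilon$.
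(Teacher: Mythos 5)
Your overall strategy (localize at the Gauss point $\eta$ and use the shift-by-one gluing to force a component of infinite degree) is the right one, but the step you yourself flag as ``the main obstacle'' is a genuine gap, and it does fail. An overconvergent open neighborhood $V$ of $\eta$ need \emph{not} contain any closed annulus $A_\epsilon=\{|\varpi|^{\epsilon}\le|x|\le|\varpi|^{-\epsilon}\}$: such an annulus contains all of $C=\{|x|=1\}$, whereas for instance the preimage under $\sep_X$ of $[X]\setminus D^{\mathrm{Berk}}$, for $D$ a closed disc contained in $C$ and not containing $\eta$ (say $D=\{|x-1|\le|\varpi|\}$), is an overconvergent open containing $\eta$ but missing the classical point $x=1$, hence missing every $A_\epsilon$. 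Your justification is also wrong at its core: $\bigcap_\epsilon A_\epsilon$ is the whole circle $C$, not $\{\xi\}$, so the quasi-compactness argument does not get off the ground. What is true --- and is exactly the content of Lemma~\ref{lem:open-in-Berkovich-disc}, proved via the Baker--Rumely basis of opens of the Berkovich disc --- is the weaker statement that $V$ contains two circles $S(1+\frac1m)$ and $S(1-\frac1m)$, one on each side of $C$, together with the single point $\eta$ of $C$; it need not contain a two-sided annular neighborhood of $C$.

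The combinatorial half also leaks, though less seriously. Over any fixed region, finitely many of the $Y^\pm_n$ (those whose splitting threshold $a_n/b_n$ has not yet dropped below the radius in play) split completely there, so $M^{\pm}$ is only a union of full length-$p$ blocks \emph{outside a finite exceptional set of blocks}, on which arbitrary subsets are allowed. The equation $M^-=M^+-1$ then does admit nonempty finite solutions supported on exceptional blocks (for $p=2$ with exceptional block $\{10,11\}$ on both sides, take $M^+=\{11\}$, $M^-=\{10\}$), and correspondingly $Y$ really does have some finite \'etale components near $\eta$. The correct conclusion is not that every component is empty but that \emph{some} component has infinite degree; the paper extracts it directly rather than by contradiction: for $n> n_0$ each $Y^\pm_n\cap Y_V$ is a $\mathbf{Z}/p\mathbf{Z}$-torsor over the connected open $V\cap U^{\pm}$ whose restriction to $S(1\pm\frac1m)\subseteq V$ is connected, hence is itself connected, and the pieces $Y^+_{n_0+1}\cap Y_V,\ Y^-_{n_0+1}\cap Y_V,\ Y^+_{n_0+2}\cap Y_V,\dots$ meet consecutively in the fiber over $\eta$ because of the identification $Z^+_m=Z^-_{m-1}$; their union is a connected open subset of $Y_V$ of infinite degree over $V$. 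If you want to keep your format, you must run the argument using only the two one-sided circles $S(1\pm\frac1m)$ and the connectedness of $V\cap U^{\pm}$, not an annulus $A_\epsilon$ that $V$ need not contain.
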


\begin{proof}
Let $V$ be a connected overconvergent open subset of $X$, which by shrinking we may assume satisfies that $V\cap U^{\pm}$ is connected, containing the Gauss point $\eta$ of the unit disk. It suffices to show that $Y_V$ contains a connected open subset with infinite fiber size over $V$.

By Lemma~\ref{lem:open-in-Berkovich-disc} below, $V$ contains $S(1 + \frac{1}{m})\cup S(1-\frac{1}{m})$ for some $m>0$. Since for $n\gg 0$, say $n>n_0$, the restriction of $Y_n^\pm$ to $S(1 \pm \frac{1}{m})$ is connected, the restriction $Y_n^\pm \cap Y_V$ of $Y_n^\pm$ to $V\cap U^\pm$ is connected as well.

Let $Y'$ be the image in $Y$ of the union of $Y_n^+$ and $Y_n^-$ for all $n>n_0$, which is an open subset of $Y$. We claim that $Y'_V = Y'\cap Y_V$ is connected. This will give the required assertion since $Y'_V\to V$ has infinite fiber size.  To see the claim, it suffices to note that in the infinite sequence 
\[ 
    Y^+_{n_0+1}\cap Y_V, \quad Y^-_{n_0+1}\cap Y_V, \quad Y^+_{n_0+2}\cap Y_V, \quad Y^-_{n_0+2}\cap Y_V, \quad Y^+_{n_0+3}\cap Y_V, \quad \ldots
\]
each set is a connected open subset of $Y'\cap Y_V$ with a non-empty intersection with the subsequent one (e.g.\@ by considering the fiber over a point of $C$). Consulting Figure~\ref{fig:gluing} again might help the reader with the last step. 
\end{proof}

\begin{lem} \label{lem:open-in-Berkovich-disc}
    Every overconvergent neighborhood of $\eta$ inside $X$ contains $S(1 + \frac{1}{m})\cup S(1-\frac{1}{m})$ for some $m>0$.
\end{lem}

\begin{proof} 
We can replace $X$ with the ambient disc $D=D(|\varpi|^{-1})$. As the topological space $[D]$ agrees with the Berkovich disk $D^\mathrm{Berk}$, it suffices to prove the analogous claim for $D^{\rm Berk}$. By \cite[Proposition~1.6]{Baker}, a basis of the topology for $D^{\rm Berk}$ is given by sets of the form $D(a, r)^- \setminus \bigcup_{i=1}^n D(a_i, r_i)$  where $D(a, r)$ (resp.\ $D(a, r)^-$) denotes the open (resp.\ closed) disc with center $a$ and radius $r$, and where we allow $r>0$ or $n=0$. We note that for every $a, r$ we have either $D(a, r) = D(0, r)$ (if $|a|\leq |r|$) or $D(a, r)\subseteq S(0, |a|)$ where $S(0, |a|) = \{|x|=|a|\}$ (if $|a|>|r|$), and similarly for $D(a, r)^-$.

Let $V$ be an open subset of $D^{\rm Berk}$ containing  $\eta$ which is of the above form. By the above observation, we may assume that $a=0$, $r>\rho$, $a_1=0$, $r_1 < \rho$, and $D(a_i, r_i) \subseteq S(0, |a_i|)$ for $i\geq 2$. Let $\rho'\in (\rho, r)$ be such that the interval $(\rho, \rho')$ does not contain any of the $r_i$, then the annulus $\{\rho<|x|<\rho'\}$ is contained in $V$. Taking preimages in $D$ we obtain the desired claim.
\end{proof}

\subsection{Extending finite \'etale components}
\label{theorem 7.1 subsec}

The map $Y\to X$ was obtained by gluing together finite \'etale coverings $Y_n$ of an admissible open neighborhood $U$ of the point $\eta$ with the property that the maximal overconvergent open neighborhood $V_n$ of $U$ over which $Y_n$ extends shrinks to $U$ as $n$ tends towards infinity. We now show that this is a general phenomemon.

\begin{prop} \label{Theorem 7.1} 
    Let $X$ be a taut rigid $K$-space and let $f\colon Y\to X$ be \'etale and partially proper. Let $U$ be a quasi-compact open subset of $X$ and let $W$ be an open subset of $Y_U$ which is finite \'etale over $U$. Then, there exists an overconvergent open subset $V$ of $X$ containing $U$ and an open subset $W'$ of $Y_V$ which is finite \'etale over $V$ and such that $W'\cap Y_U=W$.
\end{prop}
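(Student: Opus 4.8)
The plan is to reduce the statement to a topological one about the separated (Berkovich) quotients $[X]$ and $[Y]$ and then to spread out a finite covering from the compact set $\pi_X(U)$ to an open neighbourhood. First I would record two reductions. Since $f$ is partially proper it is separated, so the proper morphism $W\to U$ is a closed immersion $W\hookrightarrow Y_U$; thus $W$ is clopen in $Y_U$. Moreover $W$, being clopen in $Y_U$, is saturated for $\pi_Y$ inside $Y_U$, because $\pi_Y^{-1}(\ell)\cap Y_U$ is connected whenever nonempty (a nonempty open subset of the valuative space $\pi_Y^{-1}(\ell)$ contains its maximal point). Second, I would reduce to the case where $W\to U$ is surjective: writing $U=U_1\sqcup U_2$ for the clopen decomposition according to whether $\deg(W/U)$ is positive or zero, we have $W\subseteq Y_{U_1}$ with $W\to U_1$ surjective, and over $U_2$ one may take $W'=\emptyset$; as $\pi_X(U_1)$ and $\pi_X(U_2)$ will turn out to be disjoint compacta in $[X]$, they can be separated by disjoint overconvergent opens and the two extensions glued.

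Now the main line. Since $X$ is taut and $f$ is partially proper, $Y$ is taut, the quotient maps $\pi_X,\pi_Y$ are proper, overconvergent opens are precisely the preimages of opens under $\pi_X$ (resp.\ $\pi_Y$), and — this is where partial properness enters crucially — $[f]\colon[Y]\to[X]$ is a local homeomorphism. Put $K=\pi_X(U)$ and $L=\pi_Y(W)$, both compact. I claim $[f]|_L\colon L\to K$ is a finite covering map: the fibres are finite (compact and discrete, as $[f]$ is a local homeomorphism), $[f]|_L$ is surjective onto $K$ (as $W\to U$ is), and $L$ is clopen in $[f]^{-1}(K)$. For the last point, the valuative‑space argument above shows that for $\ell\in[f]^{-1}(K)$ the set $\pi_Y^{-1}(\ell)\cap Y_U$ is nonempty (its maximal point lies over that of the relevant fibre of $\pi_X$, which lies in $U$) and connected, hence contained in $W$ or in $Y_U\setminus W$; therefore $[f]^{-1}(K)=L\sqcup\pi_Y(Y_U\setminus W)$ is a disjoint union of two closed sets, so both are clopen. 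A proper local homeomorphism which is clopen over the compact base $K$ is a finite covering; applying this to $U_1$ and $U_2$ separately also yields the disjointness of $\pi_X(U_1)$ and $\pi_X(U_2)$ used above.

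The hard part is to enlarge $L\to K$ to a finite covering over an open neighbourhood of $K$, compatibly with the ambient local homeomorphism. For $z\in K$, write $L\cap[f]^{-1}(z)=\{\ell_1,\dots,\ell_r\}$, pick pairwise disjoint opens $\widetilde N_i\ni\ell_i$ in $[Y]$ on which $[f]$ restricts to a homeomorphism onto an open of $[X]$ and with $\widetilde N_i\cap[f]^{-1}(K)\subseteq L$ (possible since $L$ is open in $[f]^{-1}(K)$), and take $N_z\ni z$ small enough to be trivialising for $L\to K$ of degree $r$ and contained in every $[f](\widetilde N_i)$; the union of the corresponding $r$ sheets is an open $\widetilde W_z\subseteq[f]^{-1}(N_z)$, finite over $N_z$, with $\widetilde W_z\cap[f]^{-1}(K\cap N_z)=L\cap[f]^{-1}(K\cap N_z)$. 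Covering the compact $K$ by finitely many $N_{z_j}$, the $\widetilde W_{z_j}$ agree over $K$, and a standard patching‑and‑shrinking argument produces an open $\Omega\supseteq K$ in $[X]$ and an open $\widetilde W\subseteq[f]^{-1}(\Omega)$, finite over $\Omega$, with $\widetilde W\cap[f]^{-1}(K)=L$. This last step — assembling compatible local finite coverings over a compact set into one over a neighbourhood — is the place I expect to need the most care.

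Finally I would descend. Set $V=\pi_X^{-1}(\Omega)$, an overconvergent open, with $V\supseteq U$ since $U\subseteq\pi_X^{-1}(K)$, and $W'=\pi_Y^{-1}(\widetilde W)$, which is open and contained in $Y_V$. Because $W$ is $\pi_Y$‑saturated inside $Y_U$ and $\widetilde W\cap[f]^{-1}(K)=L=\pi_Y(W)$, one gets $W'\cap Y_U=W$. The morphism $W'\to V$ is étale, being an open subspace of the étale $Y_V\to V$, and proper: it factors as $W'\xrightarrow{\pi_Y}\widetilde W\xrightarrow{[f]}\Omega$ with both maps proper — the first as the restriction of the proper $\pi_Y$ over the open $\widetilde W$, the second being a finite covering — while $V\to\Omega$ is separated, so $W'\to V$ is proper by the cancellation property. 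A proper étale morphism is finite étale, which finishes the proof.
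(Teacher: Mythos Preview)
Your overall strategy---pass to the separated quotients, extend the compact set $\pi_Y(W)$ over an open neighbourhood of $\pi_X(U)$, and pull back---is exactly the paper's, and your verification that $L=\pi_Y(W)$ is compact and clopen in $[f]^{-1}(K)$ is correct and is what the paper uses. The gap is the assertion that partial properness makes $[f]\colon[Y]\to[X]$ a local homeomorphism. This is false already for the finite \'etale squaring map $f\colon\mathbf{G}_{m,K}^{\mathrm{an}}\to\mathbf{G}_{m,K}^{\mathrm{an}}$ in characteristic $\ne 2$ (which is proper, hence certainly partially proper): the fibre of $[f]$ over the Gauss point is a single point whose completed residue field is a degree-$2$ extension, while nearby classical points have two preimages, so $[f]$ is not injective on any neighbourhood of the Gauss point upstairs. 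Consequently your claim that $L\to K$ is a finite covering map, and your construction of $\widetilde W$ by choosing and gluing local sheets of $[f]$, do not go through.

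The paper replaces this step by a lemma that needs only local compactness of $[Y]$: cover the compact $L$ by finitely many compact neighbourhoods $B_1,\dots,B_m\subseteq[Y]$ disjoint from $[f]^{-1}(K)\setminus L$ and set $B=\bigcup_i B_i$; since $B\to[X]$ is proper one may shrink to an open $\Omega\supseteq K$ with $B\cap[f]^{-1}(\Omega)\subseteq\operatorname{int}_{[Y]}(B)$, and then $\widetilde W:=B\cap[f]^{-1}(\Omega)$ is open in $[Y]$ with $\widetilde W\to\Omega$ \emph{proper} (not a covering) and $\widetilde W\cap[f]^{-1}(K)=L$. Your descent then essentially works, modulo one further point you skate over: the sentence ``a proper \'etale morphism is finite \'etale'' concerns adic morphisms, but your factorisation through $\Omega\subseteq[X]$ only yields properness of $W'\to V$ as a map of topological spaces. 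The paper isolates the missing step as a separate proposition---for a taut valuative morphism $g$ between taut valuative spaces, $[g]$ is universally closed iff $g$ is quasi-compact---and then concludes via Huber's criterion that quasi-compact, \'etale, and partially proper together imply finite \'etale.
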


To prove Proposition \ref{Theorem 7.1} we first establish Lemma \ref{topological 7.1 lemma} which shows the desired result at the level of universally separated quotients. We then upgrade this to rigid spaces using the general result in Proposition \ref{quasi-compact and universally closed relationship}. (The reader more familiar with Berkovich spaces will recognize that our line of argument can be carried out more directly on the level of $[X]=X^\mathrm{Berk}$.)

\begin{lem}\label{topological 7.1 lemma}
    Let $Y \to X$ be a map of Hausdorff topological  spaces with $Y$ locally compact. Let $Z \subset X$ be a subspace and let $W \subset Y_Z$ be a clopen and compact subspace. Then there exist an open subspace $V$ of $X$ containing $Z$ and a subspace $W'$ of $Y_V$ such that:
    \begin{enumerate}
        \item $W' \cap Y_Z = W$;
        \item $W'$ is open in $Y$,
        \item $W'\to V$ is a proper map of topological spaces.
    \end{enumerate}
\end{lem}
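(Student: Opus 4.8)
The plan is to construct $V$ and $W'$ by hand, using local compactness of $Y$ to produce, around each point of the compact set $W$, a small open box that behaves well, and then patching finitely many of these together. First I would fix, for each $w \in W$, an open neighborhood $N_w$ of $w$ in $Y$ with $\overline{N_w}$ compact. Since $W$ is clopen in $Y_Z$, after shrinking $N_w$ I may assume $N_w \cap Y_Z \subseteq W$; and since $W$ is compact, finitely many of the $N_w$, say $N_{w_1}, \dots, N_{w_k}$, cover $W$. Set $N = \bigcup_i N_{w_i}$, an open subset of $Y$ with compact closure $\overline{N}$, containing $W$, and with $N \cap Y_Z \subseteq W$; conversely $W \subseteq N \cap Y_Z$ since $W$ is open in $Y_Z$, so in fact $N \cap Y_Z = W$.

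The key point is now to choose $V$. The set $K := \overline{N} \setminus N$ is compact, and it is disjoint from $W$ (as $W \subseteq N$ is open in $Y_Z$ and... more carefully: $K \cap Y_Z = \overline{N} \cap Y_Z \setminus N$, and $\overline{N} \cap Y_Z$ need not equal $W$, but its intersection with $W$ is empty because $W = N \cap Y_Z$ is disjoint from the complement of $N$). Let $X_0 = X \setminus f(K)$; since $f(K)$ is compact hence closed in the Hausdorff space $X$, this is open, and it contains $Z$ because $f(K) \cap Z = \emptyset$. Now I would \emph{worry}: $f(K)$ closed uses that $f$ maps compacts to compacts, which is automatic, and compact subsets of Hausdorff spaces are closed — fine. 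Set
\[
    V = X_0, \qquad W' = N \cap Y_V = N \cap f^{-1}(V).
\]
Then $W'$ is open in $Y$, and $W' \cap Y_Z = N \cap Y_Z = W$ (using $Z \subseteq V$), giving (1) and (2).

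For (3), I must show $W' \to V$ is proper, i.e.\ universally closed with compact fibers, or equivalently (since everything is Hausdorff and locally compact, and $V$ is locally compact Hausdorff) that preimages of compact sets are compact. Take $Q \subseteq V$ compact; then $f^{-1}(Q) \cap \overline{N}$ is closed in the compact set $\overline{N}$, hence compact, and it suffices to show $f^{-1}(Q) \cap W' = f^{-1}(Q) \cap \overline{N}$. The inclusion $\subseteq$ is clear since $W' \subseteq N \subseteq \overline{N}$. For $\supseteq$: a point $y \in f^{-1}(Q) \cap \overline{N}$ lies over $V$, so $f(y) \notin f(K)$, so $y \notin K = \overline{N}\setminus N$, hence $y \in N$; combined with $y \in f^{-1}(Q) \subseteq f^{-1}(V)$ this gives $y \in N \cap f^{-1}(V) = W'$. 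Thus $f^{-1}(Q) \cap W'$ is compact, and since $W'$ is locally compact Hausdorff this shows $W' \to V$ is proper (a continuous map between locally compact Hausdorff spaces is proper iff preimages of compacts are compact).

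The main obstacle is entirely in the bookkeeping of the first two paragraphs — specifically, making sure that the open set $N$ can be arranged to satisfy $N \cap Y_Z = W$ exactly (not merely $\subseteq$ or $\supseteq$), which forces the two-sided shrinking argument using both that $W$ is open \emph{and} closed in $Y_Z$ — and in verifying that $\overline{N}\setminus N$ really is disjoint from $W$ so that $Z \subseteq V$. Once $V$ and $W'$ are correctly set up, properties (1)–(3) are formal consequences of compactness of $\overline{N}$ and closedness of $f(\overline{N}\setminus N)$.
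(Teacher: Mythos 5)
Your overall strategy is the same as the paper's---cover the compact set $W$ by finitely many relatively compact opens avoiding $Y_Z\setminus W$, then take $V$ to be the complement of the image of the resulting compact ``boundary''---but there is a genuine gap at the one step where the argument has real content: the claim that $f(K)\cap Z=\emptyset$ (equivalently $K\cap Y_Z=\emptyset$), which is exactly what you need for $Z\subseteq V$. You only arrange $N_w\cap Y_Z\subseteq W$ for the \emph{open} sets $N_w$, and, as you yourself half-notice, this gives $K\cap W=\emptyset$ but says nothing about $K\cap(Y_Z\setminus W)$: the closure $\overline{N}$ may meet $Y_Z\setminus W$. Concretely, take $X=Y=\mathbf{R}$, $f=\mathrm{id}$, $Z=\{0\}\cup\{x:|x|\geq 1\}$, $W=\{0\}$, and $N=N_0=(-1,1)$; this choice satisfies every constraint you impose, yet $K=\{-1,1\}\subseteq Z$, so your $V=X\setminus f(K)$ fails to contain $Z$. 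The fix is to strengthen the choice of $N_w$: since $W$ is open in $Y_Z$, the point $w$ does not lie in the closed set $C=\overline{Y_Z\setminus W}$ (closure in $Y$), and in a locally compact Hausdorff space one can choose $N_w$ with $\overline{N_w}$ compact and $\overline{N_w}\subseteq Y\setminus C$. Then $\overline{N}\cap Y_Z=W$, hence $K\cap Y_Z=\emptyset$, and the rest of your argument goes through. This is precisely what the paper does: it chooses compact neighborhoods $B_i$ that are themselves contained in $Y\setminus C$, so that the whole compact set $B=\bigcup_i B_i$, and not merely its interior, meets $Y_Z$ in $W$.

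A secondary point: for (3) you invoke ``proper iff preimages of compact sets are compact'', which requires the target to be locally compact (or at least compactly generated), whereas $V\subseteq X$ is only assumed Hausdorff. Your computation in fact shows more, namely $W'=\overline{N}\cap Y_V$; since $\overline{N}$ is compact and $X$ is Hausdorff, the map $\overline{N}\to X$ is proper, and $W'\to V$ is its base change, hence proper. Phrasing it this way (as the paper does with its compact set $B$) avoids the issue.
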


\begin{proof}
Let ${\mf B}$ be a collection of compact subspaces of $Y$ containing a neighborhood basis of every point of $Y$. It suffices to find finitely many $B_1, \ldots, B_m$ in ${\mf B}$ and an open $V$ containing $Z$ such that if $B=\bigcup_i B_i$ and $g\colon B\to X$ is the induced map, then:
    \begin{enumerate}
        \item $B \cap Y_Z = W$;
        \item $g^{-1}(V) \subset \interior_{Y}(B)$.
        \item $g\colon g^{-1}(V) \to V$ is a proper map of topological spaces.
    \end{enumerate}
as one may then take $W'=g^{-1}(V)$.

To prove the existence of such basis elements observe that as $Y_Z\setminus W$ is closed we may find a closed subspace $C$ of $Y$ such that $C\cap Y_Z=Y_Z\setminus W$. As $Y\setminus C$ is an open containing the compact set $W$, we may find finitely many $B_1,\ldots,B_m$ in $\mc{B}$ contained in $Y\setminus C$ such that $W\subseteq \interior_Y(B)$ where $B=\bigcup_i B_i$. Moreover, we see that $B\cap Y_Z=W$. Note that $g^{-1}(Z)=W$. Since $B$ is compact, $g$ is proper and so we may produce an open neighborhood $V$ of $Z$ such that $g^{-1}(V)\subseteq \interior_Y(B)$. Clearly $V$ satisfies the first two desired conditions, and the last property follows as $g$ is proper.
\end{proof}

\begin{prop}\label{quasi-compact and universally closed relationship}
    Let $f\colon Y\to X$ be a taut and valuative morphism of taut valuative spaces. Then $[f]$ is universally closed if and only if $f$ is quasi-compact.
\end{prop}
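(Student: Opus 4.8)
The plan is to deduce both implications from the classical theory of proper (\emph{i.e.}\ universally closed) maps of topological spaces, applied to $[f]\colon[Y]\to[X]$, together with a handful of facts about the functor $[-]$ on taut valuative spaces that I would take from our conventions (see \cite[\S2-3]{ALY1P1}): (F1) for a taut valuative space $Z$ the space $[Z]$ is locally compact Hausdorff; (F2) a taut valuative $Z$ is quasi-compact if and only if $[Z]$ is quasi-compact; (F3) for a quasi-compact open $U\subseteq X$, the space $f^{-1}(U)$ is again taut valuative (this is where tautness of $f$ is used), $[U]$ is naturally a compact subspace of $[X]$, and $[f]^{-1}([U])=[f^{-1}(U)]$ inside $[Y]$; and (F4) every point of $[X]$ lies in the interior of some $[U]$ with $U\subseteq X$ quasi-compact open, so that every compact subset of $[X]$ is contained in a finite union of such $[U]$. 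As the remark following the statement indicates, (F1)--(F4) are standard properties of affinoid domains in good Berkovich spaces once one identifies $[Z]=Z^{\mathrm{Berk}}$.

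Suppose first that $[f]$ is universally closed, \emph{i.e.}\ proper; then $[f]$ has quasi-compact preimages of quasi-compact subspaces. For a quasi-compact open $U\subseteq X$, the subspace $[U]$ is quasi-compact by (F2) (applied to the quasi-compact, hence taut valuative, space $U$), so $[f^{-1}(U)]=[f]^{-1}([U])$ is quasi-compact by (F3), and hence $f^{-1}(U)$ is quasi-compact by (F2) again. Thus $f$ is quasi-compact.

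Conversely, suppose $f$ is quasi-compact. I would verify that $[f]$ is closed with quasi-compact fibers, which by the Bourbaki criterion (a map is proper precisely when it is closed with quasi-compact fibers) makes it proper, hence universally closed. For a fiber: given $\xi\in[X]$, choose by (F4) a quasi-compact open $U\subseteq X$ with $\xi\in[U]$; then $[f]^{-1}(\xi)\subseteq[f]^{-1}([U])=[f^{-1}(U)]$, which is compact since $f^{-1}(U)$ is quasi-compact (as $f$ is) and $[-]$ produces Hausdorff spaces; being closed in $[Y]$, the fiber is closed in this compact space and therefore compact. For closedness: since $[X]$ is locally compact Hausdorff, a subset is closed once its trace on each compact subspace is closed, and by (F4) each such compact subspace is covered by finitely many $[U]$; so it suffices to check, for closed $Z\subseteq[Y]$ and quasi-compact open $U\subseteq X$, that $[f](Z)\cap[U]$ is closed. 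But $[f](Z)\cap[U]=[f]\bigl(Z\cap[f^{-1}(U)]\bigr)$, and $Z\cap[f^{-1}(U)]$ is closed in the compact space $[f^{-1}(U)]$, hence compact, so its image is compact and therefore closed in the Hausdorff space $[X]$.

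The topological input — Bourbaki's description of proper maps and the fact that a subset of a locally compact Hausdorff space is closed iff it is so on compacts — is routine, so the main obstacle I anticipate is simply to establish (F1)--(F4) in the language of valuative spaces: one must check that $[-]$ is compatible with quasi-compact open subsets and with the base change $Y\times_X U\to U$, that it detects quasi-compactness, and that the taut valuative hypothesis on $f$ is what keeps the pieces $f^{-1}(U)$ taut valuative. Alternatively one can bypass this bookkeeping and run the whole argument on $[X]=X^{\mathrm{Berk}}$ directly, as the parenthetical remark after the statement suggests.
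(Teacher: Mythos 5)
Your ``universally closed $\Rightarrow$ quasi-compact'' direction is essentially the paper's argument: both rest on the identification $[f^{-1}(U)]\cong[f]^{-1}(\sep_X(U))$ for $U\subseteq X$ quasi-compact open, the fact that a universally closed map has quasi-compact preimages of quasi-compact sets (\stacks{005R}), and your (F2). One caution: the nontrivial half of (F2) --- that quasi-compactness of $[Z]$ forces quasi-compactness of $Z$ --- is precisely where tautness is used; the paper gets it from the universal closedness of the separation map $\sep_Z\colon Z\to[Z]$ for taut $Z$ (\cite[Chapter 0, Theorem 2.5.7]{FujiwaraKato}), so treat it as a citation rather than a formality.

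For the converse the paper simply cites \cite[Chapter 0, Proposition 2.3.27]{FujiwaraKato}, whereas you give a direct topological proof via Bourbaki's criterion (closed with quasi-compact fibers implies proper). The purely topological steps --- the fiber computation, the identity $[f](Z)\cap[U]=[f]\left(Z\cap[f^{-1}(U)]\right)$, checking closedness on compacta --- are all correct, and the approach buys self-containedness. The cost is concentrated in (F1) and (F4). Hausdorffness of $[X]$ and quasi-compactness of each $\sep_X(U)$ do follow from tautness, but local compactness of $[X]$ --- equivalently your (F4), that some $\sep_X(U)$ is a \emph{neighborhood} of a given point --- is not a formal consequence of tautness: the fiber $\sep_X^{-1}(\sep_X(x))$ can stick out of a quasi-compact open $U\ni x$ (already the Gauss point of a closed disc sitting inside a larger disc), and one must be able to replace $U$ by one whose image is still a neighborhood. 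That is exactly where \emph{local strong compactness} enters in \cite{FujiwaraKato} and \cite{ALY1P1}; it holds for rigid spaces, which is all the paper ever applies the proposition to, but the statement is for arbitrary taut valuative spaces, so as written your outline either needs that extra hypothesis or should fall back on the Fujiwara--Kato citation for this implication.
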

 \begin{proof} If $f$ is quasi-compact then $[f]$ is universally closed by \cite[Proposition 2.3.27, Chapter 0]{FujiwaraKato}. Suppose that $[f]$ is universally closed and let $U$ be a quasi-compact open subset of $X$. Note that as $X$ is quasi-separated $U$ is retrocompact and taut. By \cite[Proposition 2.2.5]{ALY1P1} we know that $[f^{-1}(U)]$ is homeomorphic to $[f]^{-1}(\sep_X(U))$. But, since $[f]$ is universally closed and $\sep_X(U)$ is quasi-compact, we know that $[f]^{-1}(\sep_X(U))$ is quasi-compact by \stacks{005R}. So, in particular, $[f^{-1}(U)]$ is quasi-compact. But, since $f^{-1}(U)$ is taut we know that the map 
\begin{equation*}
    \sep_{f^{-1}(U)}:f^{-1}(U)\to [f^{-1}(U)]
\end{equation*} is universally closed by \cite[Chapter 0, Theorem 2.5.7]{FujiwaraKato}, and so we deduce that $f^{-1}(U)$ is quasi-compact again by \stacks{005R}.
\end{proof}

\begin{proof}[Proof of Proposition \ref{Theorem 7.1}] By our assumptions, together with \cite[Theorem 2.5.7, Chapter 0]{FujiwaraKato} and \cite[Proposition 2.2.3 and Proposition 2.2.5]{ALY1P1}
\begin{itemize}[leftmargin=.6cm]
    \item $[f]\colon [Y]\to[X]$ is a map of locally compact Hausdorff spaces,
    \item $\sep_X(U)$ is a subspace of $[X]$ homeomorphic to $[U]$ (and similarly for $W$),
    \item $[Y_U]\to [f]^{-1}([U])$ is a homeomorphism,
    \item and $\sep_X(W)$ a compact clopen subset of $\sep_X(Y_U)$.
\end{itemize}
Thus, by Lemma \ref{topological 7.1 lemma}, one may find an open subspace $V_0$ of $[X]$ containing $[U]$ and a subspace $W'_0$ of $[Y]_{V_0}$ such that $W'_0$ is open in $[Y]$, $W'_0\to V_0$ is proper, and $W_0'\cap [f]^{-1}([U])=[W]$. Set 
\begin{equation*}
    V=\sep_X^{-1}(V_0)\subseteq X,\qquad W'=\sep_Y^{-1}(W_0')\subseteq Y,
\end{equation*} which are overconvergent open subsets of their ambient spaces. Note then that by construction $[W']\to [V]$ is proper. By Proposition \ref{quasi-compact and universally closed relationship} and the fact that $Y\to X$ is \'etale and partially proper, the morphism $W'\to V$ is \'etale, partially proper, and quasi-compact and so finite \'etale by \cite[Proposition 1.5.5]{Huberbook}. We finally claim that $W'\cap Y_U=W$. But, since $W'\cap Y_U$ and $W$ are overconvergent open subsets of $Y_U$, this can be checked in $[Y]$ from where it's true by construction.
\end{proof}

\section{Variants of the de~Jong fundamental group}
\label{s:cov-tau}

In this section we examine the categories $\Cov^\tau_X$ extending de Jong's category $\Cov^\oc_X$, and show they have good geometric properties.

\subsection{\texorpdfstring{The categories $\Cov_X$ and $\Cov^\tau_X$}{The categories CovX and Cov tau X}}
\label{ss:covtau}

Throughout this subsection let us fix a rigid $K$-space $X$. We will consider below the following three Grothendieck topologies $\tau = \oc,\adm,\et$ on $\Et_X$, whose covers are:
\begin{itemize}
    \item ($\tau=\oc$, for `overconvergent') open covers by overconvergent opens,
    \item ($\tau=\adm$, for `admissible')  open covers,
    \item ($\tau=\et$) jointly surjective \'etale morphisms.
\end{itemize}
which are in increasing order of fineness. (One could also define the `overconvergent \'etale topology' whose covers are jointly surjective partially proper \'etale morphisms, but it defines the same topos as the overconvergent topology by \cite[Lemma~2.2.8]{Huberbook}. See also \cite[Lemma~2.3]{deJongFundamental}).

For an adic space $X$, let us denote by $\UFEt_X$ the category of all disjoint unions of finite \'etale $X$-spaces.

\begin{definition} \label{cov tau def}
    Define $\Cov^\tau_X$ to be the category of morphisms $Y\to X$ for which there exists a cover $U\to X$ in the $\tau$-topology so that $Y_U\to U$ is an object of $\UFEt_U$. Define $\UCov^\tau_X$ to be the category of all disjoint unions of objects of $\Cov^\tau_X$.
\end{definition}

In \cite[Definition 5.2.2]{ALY1P1} we introduced the notion of a \emph{geometric covering} of $X$. It may be defined as an \'etale and partially proper morphism $f\colon Y\to X$ satisfying the following `valuative criterion': for all smooth and separated rigid $L$-curves $C$, where $L$ is a non-archimedean extension of $K$, and all morphisms $C\to X_L$, any embedding $i\colon [0,1]\to [C]$ and lift of $[0,1)\to [Y_C]$ along $[f_C]$ can uniquely be extended to a lift of $i$. We denote by $\Cov_X$ the category of geometric coverings of $X$.

By \cite[Propositions 5.2.4, 5.2.6, and 5.2.7]{ALY1P1} being a geometric covering is \'etale local on the target, is closed under disjoint unions, and contains finite \'etale coverings. Therefore, the following proposition follows.

\begin{prop} 
    The containment $\UCov^\tau_X\subseteq \Cov_X$ holds.
\end{prop}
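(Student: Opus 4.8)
The statement asserts $\UCov^\tau_X \subseteq \Cov_X$ for $\tau \in \{\oc, \adm, \et\}$. The plan is to reduce immediately to the case of $\Cov^\tau_X$, and then to verify the defining valuative criterion for geometric coverings by descent along a $\tau$-cover.

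First I would recall the three stability properties of geometric coverings quoted from \cite[Propositions 5.2.4, 5.2.6, 5.2.7]{ALY1P1}: being a geometric covering is (i) \'etale local on the target, (ii) closed under disjoint unions, and (iii) satisfied by finite \'etale coverings. Since $\UCov^\tau_X$ consists of disjoint unions of objects of $\Cov^\tau_X$, property (ii) reduces the claim to showing $\Cov^\tau_X \subseteq \Cov_X$. So let $Y \to X$ be an object of $\Cov^\tau_X$, and pick a $\tau$-cover $U \to X$ with $Y_U \to U$ in $\UFEt_U$. By property (iii), each finite \'etale $U$-space is a geometric covering, and by property (ii), their disjoint union $Y_U \to U$ is a geometric covering.

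Now the key step: since the $\et$-topology is the finest of the three, any $\oc$- or $\adm$-cover is in particular an \'etale cover, so $U \to X$ is an \'etale cover in all three cases. We have just seen $Y_U \to U$ is a geometric covering, and $Y \to X$ becomes a geometric covering after the \'etale base change $U \to X$; by property (i), \'etale locality on the target, we conclude that $Y \to X$ is itself a geometric covering, i.e.\ an object of $\Cov_X$. This gives $\Cov^\tau_X \subseteq \Cov_X$, and combined with the first paragraph, $\UCov^\tau_X \subseteq \Cov_X$.

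The only point requiring a word of care — and the closest thing to an obstacle — is checking that "\'etale local on the target" in \cite{ALY1P1} is formulated for covers by jointly surjective \'etale morphisms and not merely, say, for single surjective \'etale morphisms or for open covers; if it is stated only for the latter, one first refines the given $\tau$-cover $U\to X$ to an honest \'etale cover and applies the cited descent statement in that generality. Since all of $\oc$, $\adm$, $\et$ produce \'etale covers and the cited proposition is exactly the descent statement for the \'etale topology, there is no real content beyond assembling these three inputs; the proof is genuinely short.
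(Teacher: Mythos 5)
Your proposal is correct and is exactly the argument the paper intends: the paper cites the same three stability properties (\'etale locality on the target, closure under disjoint unions, containment of finite \'etale coverings) and leaves the assembly to the reader, which you have carried out faithfully, including the observation that an $\oc$- or $\adm$-cover is in particular an \'etale cover.
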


\subsection{A brief recollection of tame infinite Galois categories}\label{igc sec}

In the this section we briefly recall the setup of the theory of tame infinite Galois categories in the sense of  \cite[\S7]{BhattScholze}. 

\begin{definition} 
    Let $\mc{C}$ be a category and $F\colon \mc{C}\to\cat{Set}$ be a functor (called the \emph{fiber functor}). We then call the pair $(\mc{C},F)$ an \emph{infinite Galois category} if the following properties hold:
    \begin{enumerate}
        \item the category $\mc{C}$ is cocomplete and finitely complete,
        \item Each object $X$ of $\mc{C}$ is a coproduct of categorically connected objects of $\mc{C}$.
        \item There exists a set $S$ of connected objects of $\mc{C}$ which generates $\mc{C}$ under colimits.
        \item The functor $F$ is faithful, conservative, cocontinuous, and finitely continuous.
    \end{enumerate}
    The \emph{fundamental group of $(\mc{C},F)$}, denoted $\pi_1(\mc{C},F)$, is the group $\Aut(F)$ endowed with the compact-open topology. We say that $(\mc{C},F)$ is \emph{tame} if for every categorically connected object $X$ of $\mc{C}$ the action of $\pi_1(\mc{C},F)$ on $F(X)$ is transitive.
\end{definition}

In the above we used the terminology that an object $Y$ of a category $\mc{C}$ is \emph{categorically connected}. This, by definition, means that if $Y'$ is a non-initial object of $\mc{C}$ then every monomorphism $Y'\to Y$ is an isomorphism. 

The upshot of the theory of (tame) infinite Galois categories is as follows.

\begin{prop}[{\cite[Theorem 7.2.5.(d)]{BhattScholze}}] \label{igc prop} 
    Let $(\mc{C},F)$ be a tame infinite Galois category. Then, the functor
    \begin{equation*}
        F\colon \mc{C}\isomto \GSet{\pi_1(\mc{C},F)}
    \end{equation*}
    is an equivalence. 
\end{prop}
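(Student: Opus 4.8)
The plan is to replay Grothendieck's proof of the fundamental theorem of Galois categories, with finite groups replaced by topological ones; this is exactly \cite[Theorem~7.2.5]{BhattScholze}, and I sketch the argument. Write $G=\pi_1(\mc{C},F)=\Aut(F)$. First I would note that $F$ refines to a functor $\mc{C}\to\GSet{G}$: for $X\in\mc{C}$ and $x\in F(X)$ the stabilizer $\op{Stab}_G(x)$ is the preimage of $\{x\}$ under the evaluation $\Aut(F)\to F(X)$ (with $F(X)$ discrete), hence open by the definition of the compact--open topology, so $F(X)$ is a continuous $G$-set; faithfulness and conservativity of $\mc{C}\to\GSet{G}$ are inherited from $F$. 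Then I would reduce to connected objects: both $\mc{C}$ (axiom (2)) and $\GSet{G}$ express every object canonically as a coproduct of connected objects; $F$ is cocontinuous, hence preserves coproducts and the initial object; and a morphism from a connected object into a coproduct factors through a single summand on both sides. Using the resulting formula $\Hom(\coprod X_\alpha,Y)=\prod_\alpha\coprod_\beta\Hom(X_\alpha,Y_\beta)$ (for $Y=\coprod Y_\beta$), together with tameness to know $F(X_\alpha)$ is a $G$-orbit, it suffices to prove that $\Hom_{\mc{C}}(X,Y)\to\Hom_G(F(X),F(Y))$ is bijective for connected $X,Y$, and that every transitive continuous $G$-set is isomorphic to some $F(X)$ with $X$ connected (arbitrary $G$-sets are then recovered from their orbits using cocompleteness of $\mc{C}$).

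Next I would show that $F$ is pro-representable and use this to pin down the topology of $G$. The category of elements $\op{el}(F)$ is cofiltered --- binary products and equalizers exist in $\mc{C}$ by axiom (1), and passing to the connected component containing the relevant point supplies the diagrams witnessing cofilteredness --- and axiom (3) lets one replace $\op{el}(F)$ by a small cofinal subcategory $\{(P_i,p_i)\}_i$ with each $P_i$ connected, so that $F\cong\colim_i\Hom_{\mc{C}}(P_i,-)$ exhibits $F$ as the cofiltered limit of the pro-system $(P_i)$. Consequently $\op{End}(F)=\varprojlim_i F(P_i)$ as a topological monoid (each $F(P_i)$ discrete), and $\Aut(F)$ is its group of units with the subspace topology; in particular the subgroups $\op{Stab}_G(p_i)$ form a neighbourhood basis of $1\in G$. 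Here one uses that any finite intersection of basic compact--open neighbourhoods of $1$ is the stabilizer of a tuple $(x_1,\dots,x_n)\in F(X_1)\times\cdots\times F(X_n)=F(X_1\times\cdots\times X_n)$, hence of a point of the connected component of $X_1\times\cdots\times X_n$ through it, which receives a map from some $P_i$.

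Then tameness does the rest. For connected $M$, tameness says $F(M)$ is a transitive $G$-set, so for $M=\coprod_\alpha M_\alpha$ the $G$-orbits in $F(M)$ are precisely the subsets $F(M_\alpha)$; hence \emph{every} sub-$G$-set $T'\subseteq F(M)$ equals $F(Z)$ for $Z$ the corresponding subcoproduct of $M$. For fullness, given a $G$-map $\phi\colon F(X)\to F(Y)$ with $X,Y$ connected, its graph $\Gamma_\phi\subseteq F(X)\times F(Y)=F(X\times Y)$ is a sub-$G$-set, so $\Gamma_\phi=F(Z)$ for a subobject $Z\hookrightarrow X\times Y$; since $F$ is conservative and $F(Z\to X)$ is a bijection, $Z\to X$ is an isomorphism, and the composite $X\xleftarrow{\sim}Z\to Y$ is carried to $\phi$. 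For essential surjectivity onto transitive $G$-sets, given an open $U\leq G$ I would pick $i$ with $\op{Stab}_G(p_i)\subseteq U$, so that $F(P_i)\cong G/\op{Stab}_G(p_i)$ maps $G$-equivariantly onto $G/U$; the kernel pair $R_U\subseteq F(P_i\times P_i)$ of this surjection is a sub-$G$-set, hence $R_U=F(Z)$ for a subobject $Z\hookrightarrow P_i\times P_i$ which, one checks using conservativity of $F$, is an equivalence relation in $\mc{C}$; the coequalizer $P_i/Z$ then satisfies $F(P_i/Z)\cong F(P_i)/R_U\cong G/U$ because $F$ is cocontinuous.

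The hard part will be the pro-representability step and, within it, the comparison of topologies: one must not only cut $\op{el}(F)$ down to a set using the generating set of axiom (3), but also verify that the abstractly defined compact--open topology on $\Aut(F)$ really coincides with the pro-discrete topology of $\varprojlim_i F(P_i)$, i.e.\ that the point-stabilizers $\op{Stab}_G(p_i)$ genuinely form a neighbourhood basis of the identity. A secondary point needing care, hidden in the reduction to the connected case, is that $\mc{C}$ and $F$ interact correctly with the canonical coproduct decompositions (e.g.\ that coproduct injections are monomorphisms and that a morphism out of a connected object into a coproduct factors through a unique summand). Once these structural facts are secured, tameness supplies the decisive identification of sub-$G$-sets with unions of connected components, and the remaining arguments are formal.
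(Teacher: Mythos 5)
The paper gives no proof of this proposition at all---it is quoted verbatim from \cite[Theorem 7.2.5.(d)]{BhattScholze}---and your sketch is a faithful reconstruction of the argument given there: lifting $F$ to $\GSet{\pi_1(\mc{C},F)}$ via open stabilizers, reducing to connected objects, pro-representing $F$ by pointed connected objects to control the topology, and then using tameness to identify sub-$G$-sets with subcoproducts for fullness and essential surjectivity. I see no gap beyond the points you already flag as requiring care, so your proposal agrees with the proof the paper is implicitly relying on.
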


From Proposition \ref{igc prop} we see that for a tame infinite Galois category $(\mc{C},F)$ the isomorphism classes of connected objects form a set. It is moreover clear that every object $Y$ isomorphic to a~disjoint union of its \emph{categorically connected components} which, by definition, are the connected subobjects, excluding the initial object. 

Using this, it is easy to deduce the following criterion for when a subcategory of a tame infinite Galois category is itself tame infinite Galois.

\begin{lem}\label{subcategory is tame}
    Let $(\mc{C},F)$ be a tame infinite Galois category. Let $\mc{C}'$ be a strictly full subcategory of $\mc{C}$ satisfying the following three properties:
    \begin{enumerate}[(a)]
        \item an object $X$ of $\mc{C}'$ is categorically connected as an object of $\mc{C}$ if and only if it is categorically connected as an object of $\mc{C}'$,
        \item an object $X$ of $\mc{C}$ is isomorphic to an object of $\mc{C}'$ if and only if its categorically connected components are isomorphic to objects of $\mc{C'}$,
        \item the subcategory $\mc{C}'$ is closed under (small) colimits and finite limits. 
    \end{enumerate}
    Then, $(\mc{C}',F)$ is a tame infinite Galois category.
\end{lem}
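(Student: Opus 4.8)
The plan is to verify the four axioms of an infinite Galois category for $(\mc{C}',F|_{\mc{C}'})$ directly, using Proposition~\ref{igc prop} to translate everything about $(\mc{C},F)$ into the concrete category $\GSet{\pi}$, $\pi=\pi_1(\mc{C},F)$. First, cocompleteness and finite completeness of $\mc{C}'$ are exactly hypothesis (c). Faithfulness and conservativity of $F|_{\mc{C}'}$ are inherited from $F$ since $\mc{C}'$ is a full subcategory; cocontinuity and finite continuity of $F|_{\mc{C}'}$ follow by combining the corresponding properties of $F$ with (c), which guarantees that colimits and finite limits formed in $\mc{C}'$ agree with those formed in $\mc{C}$ (a full subcategory closed under a class of (co)limits computes them as in the ambient category). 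This handles axiom (1) and axiom (4).

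For axiom (2), I need every object $X$ of $\mc{C}'$ to be a coproduct of categorically connected objects of $\mc{C}'$. In $\mc{C}$ we already know $X\cong\coprod_i X_i$ with each $X_i$ categorically connected in $\mc{C}$; by hypothesis (b), applied to $X$ which is already in $\mc{C}'$, each categorically connected component $X_i$ is isomorphic to an object of $\mc{C}'$, and since $\mc{C}'$ is strictly full it actually lies in $\mc{C}'$. Hypothesis (a) then upgrades ``categorically connected in $\mc{C}$'' to ``categorically connected in $\mc{C}'$'', and hypothesis (c) ensures the coproduct $\coprod_i X_i$ is the coproduct computed in $\mc{C}'$. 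So $X$ is a coproduct of $\mc{C}'$-connected objects in $\mc{C}'$. For axiom (3), take $S$ a set of connected generators of $\mc{C}$ under colimits, and let $S'$ be a set of representatives for the isomorphism classes of those objects of $S$ which happen to lie in $\mc{C}'$ — but this is not obviously enough, so instead I would take $S'$ to be a set of representatives of isomorphism classes of categorically connected objects of $\mc{C}'$; this is a set because, via the equivalence $F\colon\mc{C}\isomto\GSet{\pi}$, connected objects of $\mc{C}$ correspond to transitive $\pi$-sets, of which there is only a set of isomorphism classes, and the connected objects of $\mc{C}'$ form a subclass. By axiom (2) just proved, every object of $\mc{C}'$ is a coproduct of objects isomorphic to members of $S'$, hence $S'$ generates $\mc{C}'$ under colimits.

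Finally, tameness: given a categorically connected object $X$ of $\mc{C}'$, hypothesis (a) says it is categorically connected in $\mc{C}$, so by tameness of $(\mc{C},F)$ the group $\pi_1(\mc{C},F)$ acts transitively on $F(X)$. It remains to observe that $\pi_1(\mc{C}',F)=\Aut(F|_{\mc{C}'})$ receives a map from $\pi_1(\mc{C},F)$ by restriction, so the $\pi_1(\mc{C}',F)$-action on $F(X)$ is at least as transitive; hence it is transitive, giving tameness of $(\mc{C}',F|_{\mc{C}'})$. The main obstacle I anticipate is purely bookkeeping: making sure at each step that the (co)limit or connectedness notion being used is the one \emph{internal} to $\mc{C}'$ rather than to $\mc{C}$, which is precisely why hypotheses (a) and (c) are stated the way they are; hypothesis (b) is the one doing real work, letting us pass from ``$X\in\mc{C}'$'' to ``each connected component of $X$ is in $\mc{C}'$'', which is what makes the decomposition in axiom (2) land inside $\mc{C}'$.
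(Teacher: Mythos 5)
Your proposal is correct and is precisely the direct verification the paper has in mind (the paper omits the proof, remarking only that the lemma is "easy to deduce" from the observation that connected objects of $(\mc{C},F)$ form a set of isomorphism classes, which is exactly what you invoke for axiom (3)). All four axioms plus tameness are handled properly, including the key points that hypothesis (c) makes (co)limits in $\mc{C}'$ agree with those in $\mc{C}$, and that transitivity of the $\pi_1(\mc{C},F)$-action passes to $\pi_1(\mc{C}',F)$ through the restriction homomorphism $\Aut(F)\to\Aut(F|_{\mc{C}'})$.
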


\subsection{\texorpdfstring{The categories $\UCov_X^\tau$ are tame infinite Galois categories}{The categories UCov tau X are tame infinite Galois categories}} 
\label{ss:covtau-galois}

We now demonstrate that $\UCov^\tau_X$ is a tame infinite Galois category.

\begin{thm} \label{taus are tame} 
    Let $X$ be a connected rigid $K$-space and $\ov{x}$ a geometric point in $X$. Then, $(\UCov^\tau_X,F_{\ov{x}})$ is a tame infinite Galois category.
\end{thm}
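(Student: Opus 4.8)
The plan is to apply the subcategory criterion of Lemma~\ref{subcategory is tame} with $\mc{C} = \Cov_X$, which is a tame infinite Galois category by \cite[Theorem~5.5.7 or the analogous result]{ALY1P1} (invoked earlier), and $\mc{C}' = \UCov_X^\tau$. We have already noted the containment $\UCov_X^\tau \subseteq \Cov_X$, so the task reduces to verifying hypotheses (a), (b), (c) of that lemma. The key conceptual point driving all three is that membership in $\Cov_X^\tau$ can be detected $\tau$-locally, and that the relevant notions (connectedness, disjoint union, finite limits) interact well with this localization.

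For hypothesis (c), I would argue as follows. Closure under disjoint unions is essentially built into the definition of $\UCov_X^\tau$: a disjoint union of objects of $\Cov_X^\tau$ is by definition in $\UCov_X^\tau$, and a disjoint union of disjoint unions is again a disjoint union. For closure under finite limits, since $\Cov_X$ has finite limits and $F_{\ov x}$ is finitely continuous, it suffices to check that a finite limit (equalizer, fiber product) of objects of $\UCov_X^\tau$, taken in $\Cov_X$, again lies in $\UCov_X^\tau$; one reduces to fiber products $Y_1 \times_X Y_2$ of objects of $\Cov_X^\tau$, picks a common $\tau$-cover $U \to X$ trivializing both (refining two $\tau$-covers to one, using that the $\tau$-topologies are genuine Grothendieck topologies closed under fiber products of covers), and then uses that $\UFEt_U$ is closed under fiber products over $U$. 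The general colimit case follows since arbitrary colimits in $\Cov_X$ are built from disjoint unions and quotients by equivalence relations, and one must check a quotient of a $\UCov_X^\tau$-object remains one — here again passing to a $\tau$-cover $U$ and using that the quotient, being a geometric covering hence étale and partially proper, restricted to $U$ is a disjoint union of its connected components each of which is finite étale over a connected component of $U$ (finiteness being inherited since quotients of finite étale covers by finite group actions are finite étale).

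For hypotheses (a) and (b), the crucial observation is that the categorically connected components of an object $Y \to X$ of $\Cov_X$ are just its connected components as an étale $X$-space (this is part of the infinite Galois structure on $\Cov_X$, where connected objects correspond to transitive $\pi_1$-sets). So (a) amounts to: $Y \in \Cov_X^\tau$ is connected as an adic space iff it is connected, which is a tautology once we know $\Cov_X^\tau$ is a strictly full subcategory closed under the relevant operations; more precisely, monomorphisms in $\Cov_X^\tau$ are the same as in $\Cov_X$ (open immersions onto unions of connected components), so categorical connectedness is intrinsic. For (b), I must show $Y \in \Cov_X$ lies in $\UCov_X^\tau$ iff each connected component $Y_i$ does. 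The forward direction is clear by restricting a trivializing $\tau$-cover. For the reverse: if each $Y_i$ has a $\tau$-cover $U_i \to X$ with $(Y_i)_{U_i} \in \UFEt_{U_i}$, one would like a single $\tau$-cover working for all $i$ simultaneously — this is the one genuinely delicate point, since there may be infinitely many components. The resolution is that $Y_i \in \Cov_X^\tau$ is itself connected and finite étale after one $\tau$-cover, so $Y = \coprod Y_i$ is a disjoint union of objects of $\Cov_X^\tau$, i.e.\ lies in $\UCov_X^\tau$ by definition, and no common refinement is needed; the definition of $\UCov_X^\tau$ as \emph{disjoint unions} of $\Cov_X^\tau$-objects is precisely engineered so that (b) holds on the nose.

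The main obstacle I anticipate is the closure under quotients (equivalently, the general colimit part of (c)): one must confirm that if $Y \in \Cov_X^\tau$ carries a free action of a finite group $G$ (or more generally a suitable equivalence relation arising in a colimit), then $Y/G$, which a priori is only known to be a geometric covering, again becomes a disjoint union of finite étale covers after a $\tau$-cover. The point is that after passing to a $\tau$-cover $U$ trivializing $Y$, the $G$-action on the $\UFEt_U$-object $Y_U$ descends the quotient to $Y_U/G$, which is again in $\UFEt_U$ since the quotient of a finite étale $U$-scheme by a free finite group action is finite étale; one then needs that $(Y/G)_U = Y_U/G$, i.e.\ that forming the quotient commutes with the base change along $U \to X$, which holds because $U \to X$ is flat (indeed a cover) and quotients by free finite group actions on étale spaces are effective and stable under flat base change. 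Assembling these observations and then citing Lemma~\ref{subcategory is tame} completes the proof for all three $\tau \in \{\adm, \et, \oc\}$ uniformly.
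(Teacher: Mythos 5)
Your proposal is correct and follows essentially the same route as the paper: both apply Lemma~\ref{subcategory is tame} with $\mc{C}=\Cov_X$ and $\mc{C}'=\UCov_X^\tau$, and verify (a)--(c) by passing to a $\tau$-cover and reducing each fiber product or coequalizer to the corresponding (clear) statement in $\UFEt$. The only cosmetic difference is that you phrase the coequalizer step via quotients by group actions and equivalence relations where the paper invokes commutation of coequalizers with base change directly, and you argue (a) via the structure of monomorphisms where the paper cites the connectedness criterion of \cite[Proposition 5.4.5]{ALY1P1}; the substance is identical.
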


\begin{proof} 
It suffices to verify conditions (a)-(c) of Lemma \ref{subcategory is tame} with $\mc{C}=\Cov_X$ and $\mc{C}'=\UCov_X^\tau$. As (b) is clear, we focus only on (a) and (c).

By \cite[Proposition 5.4.5]{ALY1P1} it suffices to show that an object of $\UCov^\tau_X$ is categorically connected if and only if it is connected. But, by the method of proof of loc.\@ cit.\@ it then suffices to prove that for $Y_i\to X$ for $i=1,2$ and $Z\to X$ objects of $\UCov_X^\tau$ that the fiber product $Y_1\times_Z Y_2$ as adic spaces is an object of $\UCov_X^\tau$. One quickly reduces to the case when all three spaces lie in $\UFEt_X$, from where the claim is clear (cf.\@ \stacks{0BN9}).

To verify condition (c) it suffices to consider the case of either a coproduct, coequalizer, or fibered product diagram. The claim in the case of fibered products and disjoint unions follows from our discussion of condition (a). Thus, it suffices to prove this result in the case of a coequalizer diagram. But, the formation of coequalizers commutes with base change (cf.\@ \stacks{03I4}). In particular, by considering the base change to an appropriate $\tau$-cover we reduce ourselves to showing the coequalizer of a~diagram in $\UFEt_X$ taken in $\Cov_X$ is in $\UFEt_X$. But, this is clear (cf.\@ \stacks{0BN9}).
\end{proof}

\begin{definition}
    Let $\ov{x}$ be a geometric point of a connected rigid $K$-space $X$. We define the \emph{$\tau$-adapted de~Jong fundamental group} of the pair $(X,\ov{x})$, denoted $\pi_1^{\dJ,\tau}(X,\ov{x})$, to be the fundamental group of the tame infinite Galois category $(\UCov^\tau_X,F_{\ov{x}})$.
\end{definition}

\begin{cor} \label{fiber functor embedding for covtau}
    Let $X$ be a connected rigid $K$-space and $\ov{x}$ a geometric point of $X$. Then, the functor 
    \begin{equation*}
        F_{\ov{x}}\colon \UCov^\tau_X\to \pi_1^{\dJ,\tau}(X,\ov{x})\text{-}\cat{Set}
    \end{equation*}
    is an equivalence of categories.
\end{cor}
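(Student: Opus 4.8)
The plan is to deduce this immediately from the machinery already assembled. By Theorem~\ref{taus are tame}, the pair $(\UCov^\tau_X, F_{\ov x})$ is a tame infinite Galois category, and by the definition immediately preceding the corollary, $\pi_1^{\dJ,\tau}(X,\ov x)$ is by fiat the fundamental group $\pi_1(\UCov^\tau_X, F_{\ov x})$ of this tame infinite Galois category. Thus I would simply invoke Proposition~\ref{igc prop} (i.e.\ \cite[Theorem 7.2.5.(d)]{BhattScholze}), which asserts that for any tame infinite Galois category $(\mc C, F)$ the fiber functor $F\colon \mc C \isomto \GSet{\pi_1(\mc C, F)}$ is an equivalence. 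Substituting $\mc C = \UCov^\tau_X$ and $F = F_{\ov x}$ yields exactly the claimed equivalence.

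Concretely, the write-up is one or two sentences: ``This is immediate from Theorem~\ref{taus are tame}, the definition of $\pi_1^{\dJ,\tau}(X,\ov x)$, and Proposition~\ref{igc prop}.'' No genuine obstacle arises here; all the content is in Theorem~\ref{taus are tame}. The only thing worth a word of care is making sure the identification is on the nose --- that the group topologized with the compact-open topology appearing in Proposition~\ref{igc prop} is literally the one named $\pi_1^{\dJ,\tau}(X,\ov x)$ in the preceding definition --- but this is true by construction, so there is nothing further to check.

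\begin{proof}
This follows by combining Theorem \ref{taus are tame}, which shows that $(\UCov^\tau_X, F_{\ov x})$ is a tame infinite Galois category, with Proposition \ref{igc prop}, and recalling that $\pi_1^{\dJ,\tau}(X,\ov x)$ is by definition the fundamental group of this tame infinite Galois category.
\end{proof}
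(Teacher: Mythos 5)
Your proof is correct and matches the paper's (implicit) argument exactly: the corollary is an immediate consequence of Theorem~\ref{taus are tame} together with Proposition~\ref{igc prop} and the definition of $\pi_1^{\dJ,\tau}(X,\ov{x})$, which is why the paper states it without a separate proof. Nothing further is needed.
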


\section{\texorpdfstring{The categories $\Cov^\et_X$ and $\Loc(X_\proet)$}{The category Cov et X and locally constant sheaves in the pro-\'etale topology}}
\label{s:cov-tau-loc}

Throughout this section we fix an analytic locally strongly Noetherian adic space $X$. We show that there is a natural equivalence between $\Cov^\et_X$ (whose definition is the same as in Definition \ref{cov tau def}) and the category $\cat{Loc}(X_\proet)$ of locally constant sheaves (see \cite[Tome III, Expos\'{e} IX, \S2.0]{SGA4}) in the pro-\'etale topology as in \cite{Scholzepadic}.

\subsection{The pro-\'etale site of a rigid space and classical sheaves}

We begin by recalling the pro-\'etale site $X_\proet$ of an adic space $X$. Moreover, we single out a class of sheaves in $\Sh(X_\proet)$, the analogue of classical sheaves from \cite{BhattScholze}, which play an important role in the proof of our main result.

\medskip

\paragraph{The pro-\'etale site} 
For a category $\mc{C}$, denote by $\cat{Pro}(\mc{C})$ the pro-completion of $\mc{C}$ as in \cite[\S6]{KashiwaraSchapira}. We write an object of $\cat{Pro}(\mc{C})$ as $\{U_i\}$. Identify $\mc{C}$ as a full subcategory of $\cat{Pro}(\mc{C})$ by sending $U$ to the constant system $\{U\}$. To distinguish constant objects from general objects we write the latter as $\msf{U}$.

Denote by $\ProEt_X$ the full subcategory of $\cat{Pro}(\Et_X)$ consisting of those objects $\msf{U}$ such that $\msf{U}\to X$ is pro-\'etale in the sense of \cite[Definition 3.9]{Scholzepadic}. Every object of $\ProEt_X$ has a presentation of the form $\{U_i\}_{i\in\mc{I}}$ where
\begin{itemize} \item $\mc{I}$ has a final object $0$ such that $U_0\to X$ is \'etale, 
\item the maps $U_j\to U_i$ for $i\geqslant j\geqslant 0$ are finite \'etale and surjective.
\end{itemize}
When speaking of a presentation of an object of $\ProEt_X$ we shall assume it is of this form. By \cite[Lemma 3.10 vii)]{Scholzepadic} the category $\ProEt_X$ admits all finite limits, a fact we use without further comment.

As in \cite[\S3]{Scholzepadic}, for an object $\msf{U}=\{U_i\}$ we denote by $|\msf{U}|$ the topological space $\varprojlim |U_i|$ and call it the \emph{underlying topological space} of $\msf{U}$. We call an object $\msf{U}$ of $\ProEt_X$ \emph{quasi-compact and quasi-separated} if its underlying space $|\msf{U}|$ is. Denote the subcategory of quasi-compact and quasi-separated objects of $\ProEt_X$ by $\ProEt_X^\mathrm{qcqs}$. By \cite[Proposition~3.12 i), ii), and v)]{Scholzepadic}, $\ProEt^\mathrm{qcqs}_X$ is closed under fiber products and every object of $\ProEt_X$ has a cover in $X_\proet$ by objects of $\ProEt^\mathrm{qcqs}_X$ which moreover can be assumed to be open embeddings on the underlying topological spaces. Denote by $X^\mathrm{qcqs}_\proet$ the induced site structure on $\ProEt^\mathrm{qcqs}_X$ from $X_\proet$. By \cite[Tome I, Expos\'{e} III, Th\`{e}orem\'{e} 4.1]{SGA4} the natural morphism of topoi $\Sh(X_\proet)\to\Sh(X_\proet^\mathrm{qcqs})$ is an equivalence.

Define the \emph{pro-\'etale site} of $X$, denoted $X_\proet$, to be the site whose underlying category is $\ProEt_X$ and whose topology is as in \cite[\S5.1]{BMS}. The inclusion functor $\Et_X\to \ProEt_X$ preserves finite limits, and is a continuous morphism of categories with Grothendieck topologies. So, we obtain an induced morphism of sites $X_\proet\to X_\et$. We denote by $\nu_X$, or $\nu$ when $X$ is clear from context, the induced morphism of topoi $\Sh(X_\proet)\to\Sh(X_\et)$.

\medskip

\paragraph{Classical sheaves} 

Using the morphism of topoi $\nu_X$, we can define the appropriate notion of `classical sheaf' as in \cite[Definition 5.1.3]{BhattScholze}. 

\begin{definition}
     A sheaf $\mc{G}$ in $\Sh(X_\proet)$ is called \emph{classical} if it is in the essential image of the pullback functor $\nu^\ast\colon \Sh(X_\et)\to\Sh(X_\proet)$.
\end{definition}

Denote by $\mc{F}^{\smallto}$ the association
\begin{equation*}
    \mc{F}^{\smallto}(\{U_i\})=\varinjlim \mc{F}(U_i)
\end{equation*}
which is an element of $\cat{PSh}(X_\proet)$. In fact, $\mc{F}^{\smallto}$ is the presheaf pullback of $\mc{F}$, and thus there is a natural map $\mc{F}^{\smallto}\to \nu_X^\ast(\mc{F})$ of objects of $\cat{PSh}(X_\proet)$.

\begin{prop}[{cf.\@ \cite[Lemma 3.16]{Scholzepadic}}] \label{classical sheaf computation} 
    For any sheaf $\mc{F}$ in $\Sh(X_\et)$ the map $\mc{F}^{\smallto}\to \nu_X^\ast(\mc{F})$ is a bijection when evaluated on any object of $X_\proet^\mathrm{qcqs}$.
\end{prop}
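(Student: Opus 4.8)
The plan is to reduce the statement to a statement about a cofiltered limit of étale sites, using the explicit presentation of objects of $\ProEt_X^\mathrm{qcqs}$. Fix a quasi-compact quasi-separated object $\msf{U}=\{U_i\}_{i\in\mc{I}}$ with presentation as above: $\mc{I}$ has final object $0$ with $U_0\to X$ étale, and the transition maps $U_j\to U_i$ are finite étale surjective for $i\geqslant j\geqslant 0$. Since $\mc{F}^{\smallto}(\msf{U})=\varinjlim_i \mc{F}(U_i)$ by definition, the content is to identify $\nu_X^\ast(\mc{F})(\msf{U})$ with the same colimit. The key input, as flagged by the citation, is \cite[Lemma 3.16]{Scholzepadic}, which computes $\nu_X^\ast(\mc{F})$ on such objects; the remaining work is mostly bookkeeping about when the presheaf $\mc{F}^{\smallto}$ is already a sheaf on $X_\proet^\mathrm{qcqs}$.

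First I would show that $\mc{F}^{\smallto}$ is a \emph{separated presheaf} on $X_\proet^\mathrm{qcqs}$, and in fact that it satisfies the sheaf condition for the two basic types of covers generating the topology of \cite[\S5.1]{BMS}: (i) étale covers pulled back from $X_\et$ (where sheafiness of $\mc{F}^{\smallto}$ follows directly from sheafiness of $\mc{F}$ on $X_\et$, since filtered colimits commute with the finite limits appearing in the sheaf condition on a qcqs object), and (ii) "pro-finite-étale" covers, i.e. surjections $\msf{V}\to\msf{U}$ which are cofiltered limits of finite étale surjective maps. For type (ii) the point is a limit argument: writing $\msf{V}=\{V_i\}$ and using that the underlying spaces $|V_i|\to|U_i|$ are finite surjective for $i$ large, one checks that a compatible family of sections of $\mc{F}^{\smallto}$ over the Čech nerve of $\msf{V}/\msf{U}$ descends, because each individual datum lives at a finite level $i$ and at that finite level it is a finite étale descent datum for the sheaf $\mc{F}$ on $X_\et$. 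Combining (i) and (ii) — using that a general cover in $X_\proet^\mathrm{qcqs}$ can be refined to a composite of the two basic types — gives that $\mc{F}^{\smallto}$ is a sheaf on $X_\proet^\mathrm{qcqs}$.

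Next, because $\mc{F}^{\smallto}$ is a sheaf on $X_\proet^\mathrm{qcqs}$ and its restriction along $\Et_X\hookrightarrow\ProEt_X^\mathrm{qcqs}$ recovers $\mc{F}$ (the colimit over $\mc{I}=\{0\}$ is trivial), the universal property of the pullback $\nu_X^\ast$ — equivalently, the adjunction $\nu_{X\ast}\nu_X^\ast\cong\mathrm{id}$ together with the fact that $\nu_X^\ast\mc{F}$ is the sheafification of the presheaf pullback $\mc{F}^{\smallto}$ — forces the canonical map $\mc{F}^{\smallto}\to\nu_X^\ast(\mc{F})$ to be an isomorphism of sheaves on $X_\proet^\mathrm{qcqs}$. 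Evaluating on any object of $X_\proet^\mathrm{qcqs}$ then yields the claimed bijection, and we are done. Alternatively, and perhaps more cleanly, one invokes \cite[Lemma 3.16]{Scholzepadic} directly to get the value of $\nu_X^\ast(\mc{F})$ on $\msf{U}$ as exactly $\varinjlim_i\mc{F}(U_i)$ and checks this agrees with $\mc{F}^{\smallto}(\msf{U})$ under the canonical map; in that formulation the work shifts entirely to verifying that the lemma applies to the presentations we use.

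The main obstacle I anticipate is the sheaf axiom for $\mc{F}^{\smallto}$ along pro-finite-étale covers (type (ii) above): one must commute a cofiltered limit (the tower defining the cover) past a filtered colimit (defining $\mc{F}^{\smallto}$) past a finite limit (the equalizer in the sheaf condition). The finite-limit-versus-filtered-colimit exchange is automatic, but the interaction with the inverse limit over the tower requires care — concretely, that every section and every gluing relation over $\msf{V}$ is already defined and already holds at some finite stage $V_i$, which in turn uses quasi-compactness of $|\msf{U}|$ to bound the stage uniformly. This is exactly the qcqs hypothesis in the statement, and is the reason the bijection is only asserted on $X_\proet^\mathrm{qcqs}$.
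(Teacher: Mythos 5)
The paper does not actually write out a proof of Proposition~\ref{classical sheaf computation}: it delegates it entirely to the citation \cite[Lemma 3.16]{Scholzepadic}. Your argument is a correct reconstruction of that proof, and it follows the same strategy the paper itself uses for the parallel statement Proposition~\ref{prop:almost-representable-presheaf}: since $\nu_X^\ast\mc{F}$ is the sheafification of the presheaf pullback $\mc{F}^{\smallto}$, it suffices to check that $\mc{F}^{\smallto}$ restricted to $X_\proet^\mathrm{qcqs}$ is already a sheaf and then apply the basis argument of Lemma~\ref{lem:site-basis} (which is exactly the abstract statement you need for your final step). Your verification of the sheaf condition --- refine a cover of a qcqs object to a finite one, note that every section and every gluing/cocycle relation descends to a finite level of the presentation $\{U_i\}$ by quasi-compactness, and conclude by \'etale descent for $\mc{F}$ at that finite level together with commutation of filtered colimits with finite limits of sets --- is the content of Scholze's proof. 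Two small cautions. First, your parenthetical ``equivalently, the adjunction $\nu_{X\ast}\nu_X^\ast\cong\mathrm{id}$'' is circular in the context of this paper: that isomorphism is Proposition~\ref{classical corollary}, which is \emph{deduced from} the present statement; your main line (sheafification plus Lemma~\ref{lem:site-basis}) does not need it, so just drop the remark. Second, the reduction of a general pro-\'etale cover to your types (i) and (ii) silently uses that a qcqs pro-\'etale object admits a presentation with $U_0\to X$ \'etale and all higher transition maps finite \'etale surjective (the presentation quoted in the paper, from \cite[Lemma 3.10 v)]{Scholzepadic}); this is where the transfinite towers of \cite[\S5.1]{BMS} collapse, and it deserves an explicit citation rather than the phrase ``can be refined to a composite of the two basic types.''
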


We thus obtain a structure theorem for classical sheaves.

\begin{prop}[{cf.\@ \cite[Lemma 5.1.2]{BhattScholze}}] \label{classical corollary} 
   For any sheaf $\mc{F}$ in $\Sh(X_\et)$, the unit map $\mc{F}\to \nu_\ast\nu^\ast\mc{F}$ for the adjunction $\nu^\ast\dashv \nu_\ast$ is an isomorphism. Therefore, the functor $\nu^\ast\colon \Sh(X_\et)\to\Sh(X_\proet)$ is fully faithful with essential image those $\mc{F}$ such that the counit map $\nu^\ast\nu_\ast\mc{F}\to \mc{F}$ is an isomorphism.
\end{prop}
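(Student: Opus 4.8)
The plan is to deduce the unit isomorphism directly from Proposition~\ref{classical sheaf computation}, and then obtain full faithfulness and the description of the essential image from formal properties of adjunctions.

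For the first assertion, fix $\mc F\in\Sh(X_\et)$ and write $\eta\colon \mc F\to\nu_*\nu^*\mc F$ for the unit. Since $X$ is an adic space, every object of $\Et_X$ admits an open cover by quasi-compact quasi-separated objects (e.g.\ affinoid opens), so to check that $\eta$ is an isomorphism it suffices to check that $\eta_U\colon \mc F(U)\to(\nu_*\nu^*\mc F)(U)$ is a bijection for every qcqs $U\in\Et_X$. Now $\nu_*$ is the pushforward along the morphism of sites induced by the continuous inclusion $\Et_X\hookrightarrow\ProEt_X$, hence is computed by $(\nu_*\mc G)(U)=\mc G(\{U\})$, where $\{U\}$ denotes the constant pro-object. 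Therefore $(\nu_*\nu^*\mc F)(U)=(\nu^*\mc F)(\{U\})$. Since $|\{U\}|=|U|$ is qcqs, the object $\{U\}$ lies in $\ProEt_X^{\mathrm{qcqs}}$, so Proposition~\ref{classical sheaf computation} shows that the natural map $\mc F^{\smallto}(\{U\})\to(\nu^*\mc F)(\{U\})$ is a bijection; and as $\{U\}$ is the constant system, the comma category computing $\mc F^{\smallto}(\{U\})$ has a terminal object, so $\mc F^{\smallto}(\{U\})=\mc F(U)$.

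It remains to identify the resulting bijection $\mc F(U)\to(\nu^*\mc F)(\{U\})$ with $\eta_U$. This follows by unwinding the construction of $\nu^*$: since $\mc F^{\smallto}$ is the presheaf pullback of $\mc F$ and $\nu^*\mc F=(\mc F^{\smallto})^{\#}$ is its sheafification, the unit $\eta$ factors as the presheaf-level unit $\mc F\to\nu_*(\mc F^{\smallto})$ (which on a constant object is the identity, under the identification $\mc F^{\smallto}(\{U\})=\mc F(U)$ just noted) followed by $\nu_*$ applied to the sheafification map $\mc F^{\smallto}\to\nu^*\mc F$; and the latter, evaluated on the qcqs object $\{U\}$, is by construction exactly the natural map of Proposition~\ref{classical sheaf computation}. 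Hence $\eta_U$ is a bijection for every qcqs $U$, and $\eta$ is an isomorphism.

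Finally, the remaining statements are formal. For any adjunction $L\dashv R$, the unit is an isomorphism if and only if $L$ is fully faithful; applying this with $L=\nu^*$, $R=\nu_*$ gives that $\nu^*$ is fully faithful. Moreover, a sheaf $\mc G$ on $X_\proet$ is isomorphic to $\nu^*\mc F$ for some $\mc F$ if and only if the counit $\nu^*\nu_*\mc G\to\mc G$ is an isomorphism: if $\mc G\cong\nu^*\mc F$, the triangle identity together with the fact that $\nu^*$ applied to the isomorphism $\eta_{\mc F}$ is an isomorphism forces the counit at $\nu^*\mc F$ to be an isomorphism, and the converse is immediate since then $\mc G\cong\nu^*(\nu_*\mc G)$. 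The one point that requires genuine care — rather than bookkeeping or standard category theory — is the identification in the third paragraph of the map supplied by Proposition~\ref{classical sheaf computation} with the unit map, i.e.\ reconciling the sheafification in the definition of $\nu^*$ with the explicit computation of $\nu^*\mc F$ on qcqs objects.
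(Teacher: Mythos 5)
Your proposal is correct and follows essentially the same route as the paper: reduce to checking the unit on quasi-compact quasi-separated objects of $\Et_X$, identify $(\nu_*\nu^*\mc F)(U)$ with $(\nu^*\mc F)(\{U\})$ and apply Proposition~\ref{classical sheaf computation} (noting $\mc F^{\smallto}(\{U\})=\mc F(U)$ for the constant system), then invoke standard adjunction facts for full faithfulness and the essential image. The paper's proof is just a terser version of the same argument, citing SGA4 for the reduction to qcqs objects.
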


\begin{proof} 
The latter statements follow from general category theory (see \cite[\S IV.1, Theorem~1]{MacLane} and \stacks{07RB}). To see the first statement we note that by \cite[Tome I, Expos\'{e}~III, Th\`{e}orem\'{e} 4.1]{SGA4} it suffices to show that the map $\mc{F}\to \nu_\ast\nu^\ast\mc{F}$ is an isomorphism when evaluated on any quasi-compact and quasi-separated object of $X_\et$. This follows from Proposition \ref{classical sheaf computation}.
\end{proof}

Arguing as in \cite[Lemma~5.1.4]{BhattScholze}, we obtain the following corollary.

\begin{lem} \label{lem:locally classical}
    Let $\mc{F}$ be an object of $\Sh(X_\proet)$. If there exists a covering $\{\msf{U}_i\to X\}$, and for each $i$ a classical sheaf $\mc{F}_i$ such that $\mc{F}|_{\msf{U}_i}\simeq \mc{F}_i|_{\msf{U}_i}$, then $\mc{F}$ is classical. In particular, locally constant sheaves are classical.
\end{lem}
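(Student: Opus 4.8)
The plan is to mimic the descent argument of \cite[Lemma~5.1.4]{BhattScholze}, using Proposition \ref{classical corollary} as the characterization of classical sheaves: $\mc{F}$ is classical if and only if the counit $\nu^\ast\nu_\ast\mc{F}\to\mc{F}$ is an isomorphism. First I would reduce to checking this on quasi-compact quasi-separated objects, and more precisely reduce to a statement about stalks or sections over a cofinal system, so that the hypothesis can be applied locally. The key point is that the conclusion $\nu^\ast\nu_\ast\mc{F}\to\mc{F}$ being an isomorphism is local on $X_\proet$: if $\{\msf{U}_i\to X\}$ is a covering, it suffices to check the counit is an isomorphism after restriction to each $\msf{U}_i$. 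Here one must be slightly careful, since $\nu$ depends on the base; restriction along an object of $X_\proet$ of the form $U_0$ étale over $X$ is unproblematic, and the general pro-étale case is handled by first covering by qcqs opens and then using that such objects are, pro-étale locally, of the shape handled by $\nu_{U_0}$ for $U_0\in\Et_X$. Then $\mc{F}|_{\msf{U}_i}\simeq \mc{F}_i|_{\msf{U}_i}$ with $\mc{F}_i$ classical gives that the counit restricted to $\msf{U}_i$ is an isomorphism, since being classical is preserved under restriction (as $\nu^\ast$ commutes with the relevant restriction functors).

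Concretely, the steps in order: (1) record that $\nu^\ast$ commutes with restriction along objects $\msf{V}\to X$ of $X_\proet$, i.e.\ there is a natural isomorphism $(\nu_X^\ast\mc{F})|_{\msf{V}}\simeq \nu_{\msf{V}}^\ast(\mc{F}|_{\msf{V}})$, at least for $\msf{V}$ ranging over a cofinal family (étale $V_0\to X$, and then qcqs pro-étale objects over such); deduce that the restriction of a classical sheaf to such $\msf{V}$ is classical. (2) Observe, using Proposition \ref{classical corollary}, that $\mc{G}$ is classical iff the counit $c_{\mc{G}}\colon\nu^\ast\nu_\ast\mc{G}\to\mc{G}$ is an isomorphism, and that $c_{\mc{G}|_{\msf{V}}}$ is identified with $c_{\mc{G}}|_{\msf{V}}$ under the isomorphism of step (1) (using that $\nu_\ast$ likewise commutes with restriction here, which follows from the explicit description $\mc{F}^{\smallto}$ of sections in Proposition \ref{classical sheaf computation}). (3) Since a morphism of sheaves on $X_\proet$ is an isomorphism iff it is so after restriction to each member of a cover, conclude from $\mc{F}|_{\msf{U}_i}\simeq\mc{F}_i|_{\msf{U}_i}$ classical that $c_{\mc{F}}|_{\msf{U}_i}$ is an isomorphism for all $i$, hence $c_{\mc{F}}$ is an isomorphism, hence $\mc{F}$ is classical. (4) For the last sentence: a locally constant sheaf is, by definition, locally isomorphic to a constant sheaf, and a constant sheaf on $X_\proet$ is $\nu^\ast$ of the constant sheaf on $X_\et$, hence classical; so the "in particular" follows from the main statement.

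The main obstacle I expect is step (1)–(2): making precise and correct the assertion that $\nu^\ast$ and $\nu_\ast$ are compatible with restriction along pro-étale morphisms, and that "classical" is a local property. One has to be careful that restriction along a general object $\msf{V}$ of $X_\proet$ need not be induced by a morphism of adic spaces, so the clean statement "$\nu$ commutes with base change" is not directly available; the fix is to only use restriction along objects that are either étale over $X$ (where $X_\proet|_{U_0}\simeq (U_0)_\proet$ by \cite[Lemma~3.10]{Scholzepadic}) or qcqs, and to invoke that every pro-étale cover can be refined by such objects (and by open embeddings on underlying spaces), together with the section-level computation of Proposition \ref{classical sheaf computation} which makes the comparison of $\nu_\ast$ with restriction transparent. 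Once these compatibilities are in hand, the descent itself is formal.
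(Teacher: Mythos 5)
Your overall strategy---descend the property of being classical along the cover, in the spirit of \cite[Lemma~5.1.4]{BhattScholze}---is the intended one; the paper gives no argument beyond citing that lemma. However, the mechanism in your steps (1)--(3) has a genuine gap. You want to identify $c_{\mc{F}}|_{\msf{U}_i}$ with the counit of $\mc{F}|_{\msf{U}_i}$ for a morphism of topoi $\nu_{\msf{U}_i}$, which requires $\nu_\ast$ to commute with restriction along $\msf{U}_i$. This is fine for objects of $\Et_X$ (ordinary localization), but for a genuinely pro-\'etale $\msf{U}_i=\{U_{i,m}\}$ there is no \'etale site of $\msf{U}_i$ and no $\nu_{\msf{U}_i}$ at all---unlike in the scheme setting of Bhatt--Scholze, where $Y=\varprojlim Y_m$ is an honest affine scheme with its own (pro-)\'etale sites. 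Your proposed fix, refining the cover by objects \'etale over $X$, is not available: pro-\'etale covers cannot in general be refined by \'etale covers (the universal pro-finite \'etale cover $\tilde{X}\to X$, which is exactly the cover used later in Proposition~\ref{prop:main-theorem-locally-constant-profinite-case}, is the basic example). Note also that Proposition~\ref{classical sheaf computation} describes $\nu^\ast$, not $\nu_\ast$, so it does not make ``the comparison of $\nu_\ast$ with restriction transparent.''

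The failure is concrete at the level of sections. For a qcqs $\msf{V}=\{V_j\}$ mapping to $\msf{U}_i$ one has $(\nu^\ast\nu_\ast\mc{F})(\msf{V})=\varinjlim_j\mc{F}(V_j)$ by Proposition~\ref{classical sheaf computation}, while $\mc{F}(\msf{V})=\mc{F}_i(\msf{V})=\varinjlim_j\mc{G}_i(V_j)$ where $\mc{F}_i=\nu^\ast\mc{G}_i$; the individual $V_j$ do \emph{not} map to $\msf{U}_i$, so the hypothesis gives no comparison of $\mc{F}(V_j)$ with $\mc{G}_i(V_j)$, and the two colimits cannot be matched termwise. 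The argument that actually closes this (the adaptation of \cite[Lemma~5.1.4]{BhattScholze}) is: reduce to showing $\varinjlim_j\mc{F}(V_j)\to\mc{F}(\msf{V})$ is bijective for qcqs $\msf{V}$; refine $\{\msf{U}_i\}$ to a finite cover by qcqs objects and set $\msf{U}=\coprod_i\msf{U}_i$; write the sheaf condition for $V_j\times_X\msf{U}\to V_j$ and for $\msf{V}\times_X\msf{U}\to\msf{V}$ as equalizers; commute the filtered colimit over $j$ with the equalizer; and use $\mc{F}|_{\msf{U}_i}\simeq\mc{F}_i|_{\msf{U}_i}$ (and its analogue on double overlaps) together with Proposition~\ref{classical sheaf computation} applied to the classical sheaves $\mc{F}_i$ to identify $\varinjlim_j\mc{F}(V_j\times_X\msf{U})$ with $\mc{F}(\msf{V}\times_X\msf{U})$. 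This \v{C}ech/colimit-interchange step is the missing content of your plan. Your step (4) for the ``in particular'' clause is fine, since a constant sheaf on $X_\proet/\msf{U}_i$ is the restriction of $\nu^\ast$ of the constant sheaf on $X_\et$.
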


From this and Lemma \ref{inverse limit connected component lemma} below, we obtain the following more concrete description of constant sheaves. This is not directly needed below.

\begin{prop} \label{constant sheaf computation} 
   For any set $S$ the natural map of sheaves 
   \begin{equation*}
       \underline{S}_{X_\proet}\to  \Hom_\cnts(\pi_0(|-|),S)
   \end{equation*} is a bijection 
   when applied to any object of $X_\proet^\mathrm{qcqs}$.
\end{prop}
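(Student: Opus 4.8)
The plan is to reduce everything, via Proposition \ref{classical sheaf computation} (or rather its proof) and Lemma \ref{lem:locally classical}, to an explicit computation of the constant sheaf on a quasi-compact quasi-separated pro-\'etale object. Let $\msf{U}=\{U_i\}$ be an object of $X_\proet^\mathrm{qcqs}$. The source $\underline{S}_{X_\proet}$ is the sheafification of the constant presheaf with value $S$; since the constant sheaf on $X_\et$ is $\underline{S}_{X_\et}$ and $\nu^\ast$ preserves constant sheaves (it is the pullback along a morphism of topoi, so it sends the terminal object to the terminal object and commutes with coproducts), we have $\underline{S}_{X_\proet}\simeq \nu^\ast(\underline{S}_{X_\et})$. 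Hence by Proposition \ref{classical sheaf computation} applied to the \'etale sheaf $\mc{F}=\underline{S}_{X_\et}$, evaluation on $\msf{U}$ gives $\underline{S}_{X_\proet}(\msf{U})=\varinjlim_i \underline{S}_{X_\et}(U_i)=\varinjlim_i \mathrm{Hom}_\cnts(\pi_0(|U_i|),S)$, where I use that sections of a constant \'etale sheaf over $U_i$ are locally constant $S$-valued functions, i.e.\ continuous maps $\pi_0(|U_i|)\to S$ with $S$ discrete, since $|U_i|$ is spectral and $\pi_0$ is formed in the category of (profinite, or at worst spectral) spaces.

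Next I would identify the right-hand side. By Lemma \ref{inverse limit connected component lemma} (invoked in the statement), $\pi_0(|\msf{U}|)=\pi_0(\varprojlim_i |U_i|)\cong \varprojlim_i \pi_0(|U_i|)$, a profinite set presented as a cofiltered limit of the profinite sets $\pi_0(|U_i|)$; the transition maps are surjective by the structure of a presentation (the $U_j\to U_i$ are finite \'etale surjective, hence surjective on $\pi_0$). Since $S$ is discrete, $\mathrm{Hom}_\cnts(-,S)$ turns this cofiltered limit of compact Hausdorff spaces into the filtered colimit $\varinjlim_i \mathrm{Hom}_\cnts(\pi_0(|U_i|),S)$: a continuous map from a cofiltered limit of profinite spaces to a discrete set factors through one of the terms, because the preimages of the (finitely many relevant) points of $S$ form a clopen partition which, by quasi-compactness, is pulled back from some finite stage. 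This matches the left-hand side computed above, and one checks the identification is induced by the natural map in the statement — the natural map $\underline{S}_{X_\proet}\to \mathrm{Hom}_\cnts(\pi_0(|-|),S)$ is, on each $U_i\subseteq\msf{U}$, exactly the tautological "view a locally constant function as a function on $\pi_0$" map, and these are compatible in the colimit.

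There is also a small point that $\mathrm{Hom}_\cnts(\pi_0(|-|),S)$ genuinely \emph{is} a sheaf on $X_\proet$, so that "natural map of sheaves" in the statement makes sense; this follows because $|-|$ sends pro-\'etale covers to topological quotient-type covers (coverings in $X_\proet^\mathrm{qcqs}$ can be taken to be open embeddings on underlying spaces covering $|\msf{U}|$, by the cited \cite[Proposition 3.12]{Scholzepadic}), $\pi_0$ of an open cover of a space still detects locally constant functions, and descent for continuous $S$-valued functions along open covers is immediate. Alternatively one avoids this by noting the statement only asserts a bijection on $\mathrm{qcqs}$ objects, so it suffices to produce a natural transformation of presheaves on $X_\proet^\mathrm{qcqs}$ and check it is objectwise bijective there.

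The main obstacle I expect is the bookkeeping around $\pi_0$: making sure that $\pi_0(|U_i|)$ is genuinely profinite (this uses that $|U_i|$ is spectral — $X$ is analytic locally strongly Noetherian, so $U_i\to X$ \'etale makes $|U_i|$ a spectral or at least locally spectral, taut space — so $\pi_0$ of each quasi-compact piece is profinite) and that the colimit–limit interchange $\varinjlim_i \mathrm{Hom}_\cnts(\pi_0(|U_i|),S)\cong \mathrm{Hom}_\cnts(\varprojlim_i \pi_0(|U_i|),S)$ holds with \emph{arbitrary} discrete $S$ (not just finite), which is fine because any continuous map out of a profinite space has finite image. Everything else — commuting $\nu^\ast$ with constant sheaves, quoting Propositions \ref{classical sheaf computation} and \ref{classical corollary} and Lemma \ref{inverse limit connected component lemma} — is formal once these topological facts are in place.
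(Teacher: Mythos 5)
Your proposal is correct and follows exactly the route the paper intends (the paper gives no written proof, but explicitly signposts that the statement follows from the discussion of classical sheaves — Proposition \ref{classical sheaf computation} applied to $\nu^\ast\underline{S}_{X_\et}$ — together with Lemma \ref{inverse limit connected component lemma}). Your handling of the side points (qcqs presentations so that each $|U_i|$ is spectral, the colimit–limit interchange for maps to a discrete set, and the sheaf property of the target) is accurate and fills in precisely the details the paper leaves implicit.
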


\begin{lem} \label{inverse limit connected component lemma} 
    Let $\{X_i\}$ be a projective system of spectral spaces with quasi-compact transition maps. Set $X=\varprojlim_i X_i$. Then, $X$ is spectral and the natural map $\pi_0(X)\to \varprojlim_i \pi_0(X_i)$ is a homeomorphism of profinite spaces.
\end{lem}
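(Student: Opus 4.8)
The plan is to reduce everything to standard facts about spectral spaces and cofiltered limits. First I would recall that an inverse limit of spectral spaces along spectral (= quasi-compact, quasi-separated and continuous) transition maps is again spectral, and that the projection maps $X \to X_i$ are spectral; this is classical (see \stacks{0A2Z} or \cite[\S2]{Hochster}). So the content is entirely in the statement about $\pi_0$. Since each $X_i$ is spectral, $\pi_0(X_i)$ is a profinite space — indeed the quotient map $X_i \to \pi_0(X_i)$ realizes $\pi_0(X_i)$ as the maximal profinite (equivalently, Hausdorff) quotient of $X_i$, because the connected components of a spectral space coincide with the fibers of the map to its underlying Boolean algebra of clopens. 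Passing to the limit, $\varprojlim_i \pi_0(X_i)$ is profinite and hence compact Hausdorff, and the natural map $\pi_0(X) \to \varprojlim_i \pi_0(X_i)$ is a continuous map from a compact space (as $\pi_0(X)$ is a quotient of the spectral, hence quasi-compact, space $X$) to a Hausdorff space. It therefore suffices to show this map is a continuous bijection, as such a map between compact Hausdorff spaces is automatically a homeomorphism.

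Next I would establish surjectivity and injectivity separately. For surjectivity: an element of $\varprojlim_i \pi_0(X_i)$ is a compatible family of connected components $C_i \subseteq X_i$; each $C_i$ is closed, hence quasi-compact, and the preimages $p_i^{-1}(C_i) \subseteq X$ form a cofiltered family of nonempty quasi-compact closed subsets, so their intersection is nonempty, and any point in it maps to the given family — so the image component of that point maps to $(C_i)_i$. For injectivity: suppose $x, y \in X$ lie in the same component of every $X_i$. I want to show they lie in the same component of $X$, equivalently that every clopen subset of $X$ containing $x$ also contains $y$. Here I would use that $X$ is spectral, so its clopen subsets are exactly the quasi-compact opens with quasi-compact complement; by the description of spectral spaces as limits, every clopen of $X$ is the preimage of a clopen of some $X_i$ (a quasi-compact open in the limit is a finite union of basic opens, each pulled back from some finite stage, and one can arrange a single stage by cofiltration; the complement being quasi-compact forces it to also be pulled back). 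Since a clopen of $X_i$ is a union of connected components, and $x, y$ have the same component in $X_i$, any such clopen contains both or neither. Hence $x$ and $y$ are in the same component of $X$.

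The main obstacle I expect is the injectivity step, specifically the claim that every clopen subset of $X = \varprojlim_i X_i$ descends to a clopen subset of some finite stage $X_i$. This requires knowing that the topology on the spectral space $X$ has a basis of quasi-compact opens pulled back from the $X_i$, and that quasi-compactness of both a set and its complement is a condition that can be detected at a finite stage — the latter using that in a cofiltered limit of spectral spaces with spectral transitions, a constructible subset is the preimage of a constructible subset from some stage (see \stacks{0A2W} or Hochster's thesis). Once one has that clopens of $X$ are "constructible and stable", the descent is formal. I would therefore organize the proof around first citing the spectrality and descent-of-constructibles results, then running the two-line quasi-compactness argument for surjectivity and the clopen-descent argument for injectivity, and finally invoking compact-Hausdorff to upgrade the bijection to a homeomorphism.
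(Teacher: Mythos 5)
Your overall strategy is sound and, at bottom, rests on the same two external inputs as the paper's proof: spectrality of the limit, and the descent of clopens (equivalently, of $\mathbf{F}_2$-valued continuous functions) along a cofiltered limit of spectral spaces with quasi-compact transition maps. The packaging differs: the paper first isolates the statement that a cofiltered limit of non-empty \emph{connected} spectral spaces is non-empty and connected (proved via $\varinjlim \Gamma(X_i,\underline{\mathbf{F}_2})\isomto\Gamma(X,\underline{\mathbf{F}_2})$), and then gets surjectivity by applying this to the system $\{C_i\}$ and injectivity by observing that two points mapping to the same $(C_i)$ both lie in the connected set $C=\varprojlim C_i$. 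You instead run surjectivity through a finite-intersection-property argument and injectivity through direct clopen descent. Your injectivity argument is fine (modulo the small imprecision that clopens of a spectral space are the subsets that are quasi-compact open with quasi-compact \emph{open} complement, i.e.\ open and constructible), and is essentially the paper's argument unwound.

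The one step you should tighten is in surjectivity: you assert that the sets $p_i^{-1}(C_i)$ are non-empty and then invoke the finite intersection property in the quasi-compact space $X$. But the projections $p_i\colon X\to X_i$ need not be surjective, so non-emptiness of $p_i^{-1}(C_i)$ is not free; it is itself an instance of the theorem that a cofiltered limit of non-empty spectral spaces with quasi-compact transition maps is non-empty (applied to the system $\{f_{ji}^{-1}(C_i)\}_{j\geq i}$, each term of which contains $C_j$). As written the justification is circular in flavor --- the non-emptiness of the individual preimages is no easier than the non-emptiness of their intersection $\varprojlim_i C_i$. The fix is simply to cite that non-emptiness theorem once, applied directly to the system $\{C_i\}$ of non-empty closed (hence spectral) subspaces, which is exactly what the paper does; with that citation in place your argument closes up.
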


\begin{proof}
Under the given assumptions, $X$ is spectral by \cite[Chapter 0, Theorem 2.2.10]{FujiwaraKato}. Suppose first that the $X_i$ are connected and non-empty. By \cite[Chapter 0, Proposition 3.1.10]{FujiwaraKato}, the induced map $\varinjlim \Gamma(X_i,\underline{\mathbf{F}_2})\to \Gamma(\varprojlim X_i,\underline{\mathbf{F}_2})$ is a bijection. Thus $\Gamma(X, \underline{\mathbf{F}_2}) = \mathbf{F}_2$, and so $X$ is connected and non-empty, showing the assertion.

By \stacks{0906}, $\pi_0(X)$ and each $\pi_0(X_i)$ are profinite, and thus also $\varprojlim_i \pi_0(X_i)$ is profinite. Since profinite spaces are compact, it suffices to prove that the map in question is a bijection. To this end, let $(C_i)$ be in $\varprojlim_i \pi_0(X_i)$. As $C_i$ is closed in $X_i$, the $C_i$ form an inverse system of non-empty connected spectral spaces with quasi-compact transition maps, and hence by the first paragraph the space $C = \varprojlim_i C_i$ is a non-empty and connected spectral space. 

By \cite[Chapter 0, Lemma 2.2.19]{FujiwaraKato} we have $C = \bigcap_i p_i^{-1}(C_i)$, where $p_i:X\to X_i$ is the natural map. Let $x$ be a point in $C$ and let $C_x$ be the component of $X$ containing it. Since $p_i(C_x)$ is connected and intersects $C_i$ at $p_i(x)$ we see that $p_i(C_x)\subseteq C_i$ for all $i$, so that $C_x$ maps to $(C_i)$. Conversely, if the connected components of $C_x$ and $C_y$ for $x,y\in X$ both map to $(C_i)$, then $x,y\in \bigcap_i p_i^{-1}(C_i)=C$, which is connected, and so $C_x=C_y$.
\end{proof}

For an object $Y$ of $\Et_X$ we denote by $h_{Y,\et}$ its corresponding sheaf. For an object $\msf{Y}$ of $\ProEt_X$ we denote by $h_{\msf{Y},\proet}$ its corresponding presheaf, and by $h_{\msf{Y},\proet}^\sharp$ its sheafification. This sheafification is necessary.

\begin{example}
Let $X$ be the disjoint union of copies $X_n$ of $\Spa(K)$ indexed by $n\geq 0$, and let $L$ be a non-trivial finite separable extension of $K$. We set $Y$ to be the disjoint union of $Y_n=\Spa(L)$, with the natural \'etale map $Y\to X$. Let $\msf{U} = \{U_i\}$ be the following object of $X_\proet$ indexed by $i\geq 0$: the space $U_i$ is the disjoint union of $U_{i,n}$ indexed by $n\geq 0$, where $U_{i,n}= X_n$ for $n>i$ and $U_{i,n}= Y_n$ for $n\leq i$. Note that for every $i$, we have $\Hom_X(U_i, Y)=\emptyset$, and hence $h_{Y,\proet}(\msf{U})$ is empty. Then $\{Y_n\to \msf{U}\}_{n\geq 0}$ forms a pro-\'etale cover which violates the sheaf condition for $h_{Y,\proet}$.
\end{example}

While $X_\proet$ is not subcanonical, we can use Proposition \ref{classical sheaf computation} to show that $h_{\msf{Y},\proet}$ is a sheaf when restricted to $X^\mathrm{qcqs}_\proet$.

\begin{prop}\label{prop:almost-representable-presheaf}
    Let $\msf{Y}$ be an object of $\ProEt_X$. Then, the natural map  $h_{\msf{Y},\proet}\to h^\sharp_{\msf{Y},\proet}$ a bijection when evaluated on any element of $\ProEt^\mathrm{qcqs}_X$. In particular, the site $X^\mathrm{qcqs}_\proet$ is subcanonical.
\end{prop}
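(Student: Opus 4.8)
The plan is to reduce the statement to the single assertion that the restriction $h_{\msf{Y},\proet}|_{\ProEt_X^\mathrm{qcqs}}$ is already a sheaf for the induced topology $X_\proet^\mathrm{qcqs}$. Granting this, both claims of the proposition are essentially formal: the bijectivity of $h_{\msf{Y},\proet}\to h^\sharp_{\msf{Y},\proet}$ on qcqs objects follows because a presheaf that is a sheaf on a subsite generated (by covers) by qcqs objects already computes its own sheafification at a qcqs object, and the ``in particular'' is the special case $\msf{Y}\in\ProEt_X^\mathrm{qcqs}$, since then $h_{\msf{Y},\proet}|_{\ProEt_X^\mathrm{qcqs}}$ is literally the presheaf on $X_\proet^\mathrm{qcqs}$ represented by $\msf{Y}$, and every object of $\ProEt_X^\mathrm{qcqs}$ arises this way.

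To prove the reduced claim I would first rewrite $h_{\msf{Y},\proet}$ as an inverse limit of classical pullbacks. Fix a presentation $\msf{Y}=\{Y_i\}_{i\in\mc{I}}$ of the standard form, where $Y_0\to X$ is \'etale and the transition maps $Y_j\to Y_i$ are finite \'etale surjective. Using only the definition of morphisms in a pro-category, for every $\msf{U}=\{U_j\}$ in $\ProEt_X$ we get
\[
    h_{\msf{Y},\proet}(\msf{U})=\Hom_{\cat{Pro}(\Et_X)}(\msf{U},\{Y_i\})=\varprojlim_{i}\,\varinjlim_{j}\Hom_{\Et_X}(U_j,Y_i)=\varprojlim_{i} h_{Y_i,\et}^{\smallto}(\msf{U}),
\]
where $h_{Y_i,\et}$ is the representable (hence \'etale-)sheaf on $X_\et$ attached to $Y_i\in\Et_X$ and the last equality is the definition of $(-)^{\smallto}$. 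Now by Proposition~\ref{classical sheaf computation} the canonical map $h_{Y_i,\et}^{\smallto}\to\nu_X^\ast h_{Y_i,\et}$ is bijective on every object of $X_\proet^\mathrm{qcqs}$, for each $i$. Since each $\nu_X^\ast h_{Y_i,\et}$ is a sheaf on $X_\proet$, and since the sheaf condition is itself a limit and therefore commutes with the inverse limit over $i$, the presheaf $\mc{G}:=\varprojlim_i\nu_X^\ast h_{Y_i,\et}$ is a sheaf on $X_\proet$; restricting it to the subsite $X_\proet^\mathrm{qcqs}$ again yields a sheaf, and by the display $\mc{G}|_{\ProEt_X^\mathrm{qcqs}}=h_{\msf{Y},\proet}|_{\ProEt_X^\mathrm{qcqs}}$. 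This proves the reduced claim.

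It remains to descend from ``$h_{\msf{Y},\proet}|_{\ProEt_X^\mathrm{qcqs}}$ is a sheaf'' to the statement about $h^\sharp_{\msf{Y},\proet}$ on $X_\proet$. Here one uses that $\ProEt_X^\mathrm{qcqs}$ is a dense subsite of $\ProEt_X$ — every object of $X_\proet$ admits a cover by qcqs objects by \cite[Proposition~3.12]{Scholzepadic}, and $\ProEt_X^\mathrm{qcqs}$ is stable under fibre products — so that by the comparison lemma \cite[Tome~I, Expos\'{e}~III, Th\`{e}orem\'{e} 4.1]{SGA4} sheafification commutes with restriction to it: $h^\sharp_{\msf{Y},\proet}|_{\ProEt_X^\mathrm{qcqs}}=\bigl(h_{\msf{Y},\proet}|_{\ProEt_X^\mathrm{qcqs}}\bigr)^\sharp=h_{\msf{Y},\proet}|_{\ProEt_X^\mathrm{qcqs}}$, which is exactly the asserted bijectivity.

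The main (and really only) delicate point is this last interaction of sheafification with the passage to the qcqs subsite; given Proposition~\ref{classical sheaf computation}, everything else is bookkeeping. If one prefers not to invoke the comparison lemma, it can be done directly: for a qcqs object $\msf{U}$, covers of $\msf{U}$ by qcqs objects (automatically having qcqs fibre products) are cofinal among all covers of $\msf{U}$, so running the plus-construction twice against such covers and using that $h_{\msf{Y},\proet}|_{\ProEt_X^\mathrm{qcqs}}$ is a sheaf shows that $h_{\msf{Y},\proet}(\msf{U})\to h^\sharp_{\msf{Y},\proet}(\msf{U})$ is bijective.
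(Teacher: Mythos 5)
Your proposal is correct and follows essentially the same route as the paper: reduce to showing that $h_{\msf{Y},\proet}$ restricted to $X_\proet^{\mathrm{qcqs}}$ is a sheaf (your ``plus-construction against cofinal qcqs covers'' argument is exactly the paper's Lemma~\ref{lem:site-basis}), then write $h_{\msf{Y},\proet}$ as an inverse limit of the presheaves $h_{Y_i,\et}^{\smallto}$ and invoke Proposition~\ref{classical sheaf computation} together with the fact that a limit of sheaves taken in presheaves is a sheaf. The only cosmetic difference is that you route the last step through $\nu^\ast h_{Y_i,\et}$ explicitly, whereas the paper cites Proposition~\ref{classical sheaf computation} directly for $h_{Y_i,\et}^{\smallto}$ being a sheaf on the qcqs subsite.
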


\begin{proof} By Lemma \ref{lem:site-basis} below, it suffices to show that $h_{\msf{Y},\proet}$ is a sheaf when restricted to $X^\mathrm{qcqs}_\proet$. But, if $\msf{Y}=\{Y_i\}$ then  $h_{\msf{Y},\proet}=\varprojlim h_{Y_i,\proet}$ where this inverse limit is taken in $\cat{PSh}(X_\proet)$. Since the inverse limit of sheaves, taken in the category of presheaves, is a sheaf, we're restricted to showing that for all $i$ the restriction of $h_{Y_i,\proet}$ to $X^\mathrm{qcqs}_\proet$ is a sheaf. But, since $h_{Y_i,\proet}=h_{Y_i,\et}^{\smallto}$ this follows from Proposition \ref{classical sheaf computation}.
\end{proof}

\begin{lem} \label{lem:site-basis}
    Let $\mc{C}$ be a site and let $\mc{B}\subseteq\mc{C}$ be a full subcategory closed under fiber products and such that every object $V$ of $\mc{C}$ admits a covering family $\{U_\alpha\to V\}_{\alpha\in I}$ with each $U_\alpha$ an object of $\mc{B}$. Let $\mc{F}$ be a presheaf on $\mc{C}$ whose restriction to $\mc{B}$ is a sheaf, and let $\mc{F}^\#$ be its sheafification. Then, for every object $U$ of $\mc{B}$, the natural map $\mc{F}(U)\to \mc{F}^\#(U)$ is an isomorphism.
\end{lem}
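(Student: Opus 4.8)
The plan is to compute the sheafification through the plus construction and to observe that, over objects of $\mc{B}$, all the relevant \v{C}ech data may be extracted inside $\mc{B}$. Recall that $\mc{F}^{\#}=(\mc{F}^{+})^{+}$, where for a presheaf $\mc{G}$ on $\mc{C}$ and an object $U$ of $\mc{C}$ one has $\mc{G}^{+}(U)=\colim_{\mc{V}}\Hom_{\cat{PSh}(\mc{C})}(\mc{V},\mc{G})$, the colimit running over the cofiltered poset of covering sieves $\mc{V}$ of $U$ ordered by inclusion (so the colimit itself is filtered), and that $(-)^{+}$ sends an arbitrary presheaf to a separated one and a separated presheaf to a sheaf. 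It therefore suffices to prove the following stability statement: \emph{if $\mc{G}$ is a presheaf on $\mc{C}$ whose restriction to $\mc{B}$ is a sheaf for the induced topology, then the canonical map $\mc{G}\to\mc{G}^{+}$ is an isomorphism on every object of $\mc{B}$.} Granting this, apply it first to $\mc{G}=\mc{F}$: the resulting identification shows $\mc{F}^{+}|_{\mc{B}}\cong\mc{F}|_{\mc{B}}$ compatibly with restrictions, so $\mc{F}^{+}|_{\mc{B}}$ is again a sheaf, and a second application to $\mc{G}=\mc{F}^{+}$ gives $\mc{F}^{\#}(U)=(\mc{F}^{+})^{+}(U)=\mc{F}^{+}(U)=\mc{F}(U)$ for $U$ in $\mc{B}$, the composite being the canonical map.

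To prove the stability statement, fix an object $U$ of $\mc{B}$. First, covering sieves of $U$ generated by families $\{W_{\alpha}\to U\}_{\alpha}$ with all $W_{\alpha}$ in $\mc{B}$ are cofinal among all covering sieves of $U$: given a covering family $\{V_{j}\to U\}$ lying in a covering sieve $\mc{V}$, choose for each $j$ a covering family $\{W_{jk}\to V_{j}\}$ with $W_{jk}$ in $\mc{B}$ (possible by hypothesis), and note that $\{W_{jk}\to U\}$ is a covering family contained in $\mc{V}$. So in the colimit defining $\mc{G}^{+}(U)$ we may restrict to sieves $\mc{V}$ of this form. Second, for such a sieve, generated by $\{W_{\alpha}\to U\}$ with $W_{\alpha}$ in $\mc{B}$, the kernel pair of $\coprod_{\alpha}h_{W_{\alpha}}\to h_{U}$ is $\coprod_{\alpha,\beta}h_{W_{\alpha}\times_{U}W_{\beta}}$, so $\Hom_{\cat{PSh}(\mc{C})}(\mc{V},\mc{G})$ is the equalizer of $\prod_{\alpha}\mc{G}(W_{\alpha})\rightrightarrows\prod_{\alpha,\beta}\mc{G}(W_{\alpha}\times_{U}W_{\beta})$. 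Since $U$, $W_{\alpha}$, $W_{\beta}$ all lie in $\mc{B}$ and $\mc{B}$ is closed under fiber products, each $W_{\alpha}\times_{U}W_{\beta}$ lies in $\mc{B}$ as well, so this equalizer involves only the restriction $\mc{G}|_{\mc{B}}$; as $\{W_{\alpha}\to U\}$ is a covering for the induced topology and $\mc{G}|_{\mc{B}}$ is a sheaf, the equalizer equals $\mc{G}(U)$. Finally, under these identifications the transition maps in the colimit are the identity of $\mc{G}(U)$ (a section of $\mc{G}(U)$ restricts to the compatible family on any $\mc{B}$-cover, and further to the one on any refinement), so $\mc{G}^{+}(U)=\mc{G}(U)$, compatibly with restriction along morphisms of $\mc{B}$, and the map realizing this is the canonical one. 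This proves the stability statement.

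The only real difficulty is bookkeeping: one must carefully keep apart a covering family, the sieve it generates, and the induced topology on $\mc{B}$, and verify that the \v{C}ech equalizer attached to a $\mc{B}$-generated cover of an object of $\mc{B}$ is genuinely computed inside $\mc{B}$ --- this is exactly where closure of $\mc{B}$ under fiber products is used, together with the fact that $U$ itself lies in $\mc{B}$, which is what makes the $W_{\alpha}\times_{U}W_{\beta}$ fiber products of objects of $\mc{B}$. Alternatively, one could route the argument through the comparison lemma \cite[Tome~I, Expos\'e~III, Th\'eor\`eme~4.1]{SGA4}: the hypotheses on $\mc{B}$ guarantee that restriction $\Sh(\mc{C})\to\Sh(\mc{B})$ is an equivalence of categories, and a short argument with local isomorphisms shows that sheafification commutes with restriction to $\mc{B}$; since $\mc{F}|_{\mc{B}}$ is already a sheaf, it is its own sheafification, and $\mc{F}^{\#}|_{\mc{B}}\cong\mc{F}|_{\mc{B}}$ follows at once.
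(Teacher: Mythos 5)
Your proof is correct and follows essentially the same route as the paper's: both compute $\mc{F}^{\#}=(\mc{F}^{+})^{+}$ via the plus construction, use the cofinality of covers by objects of $\mc{B}$ together with closure of $\mc{B}$ under fiber products to see that the relevant \v{C}ech data lies entirely in $\mc{B}$, and conclude from the sheaf condition on $\mc{B}$. The only differences are cosmetic: you index the colimit by covering sieves rather than by covering families with refinements, and you make explicit the step (implicit in the paper's final chain of equalities) that $\mc{F}^{+}|_{\mc{B}}$ is again a sheaf before applying $(-)^{+}$ a second time.
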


\begin{proof}
For an object $V$ of $\mc{C}$, denote by $\mc{J}_V$ the category whose objects are covering families $\{V_\alpha\to V\}_{\alpha\in I}$ and whose morphisms are refinements. We note that by \stacks{00W7} and \stacks{00W6} that the diagram $H^0(-,\mc{F})\colon \mc{J}_V\to \cat{Set}$ is filtered (see \stacks{002V}) and we define $\mc{F}^+(V) = \varinjlim_{\mc{J}_V} H^0(\mc{V}, \mc{F})$. By \stacks{00W1} the sheafification of $\mc{F}$ is identified with the composition $\mc{F}\to \mc{F}^+\to (\mc{F}^+)^+$.

Denote by $\mc{J}'_V$ the full subcategory of $\mc{J}_V$ consisting of coverings whose elements are in $\mc{B}$. The diagram $H^0(-,\mc{F})\colon \mc{J}'_V\to\cat{Set}$ is cofinal in the diagram $H^0(-,\mc{F})\colon \mc{J}_V\to\cat{Set}$. Therefore $\mc{F}^+(V) = \varinjlim_{\mc{J}'_V} H^0(\mc{V}, \mc{F})$. This colimit depends only on the values of $\mc{F}$ on $V$, and if $V$ is itself an object of $\mc{B}$, then $\mc{F}^+(V)=\mc{F}(V)$. Therefore $\mc{F}^\#(V) = (\mc{F}^+)^+(V) = \mc{F}^+(V) = \mc{F}(V)$.
\end{proof}

\subsection{\texorpdfstring{Sheaves on $\pfset{G}$ and $X_\profet$}{Sheaves on G-pfset and Xprofet}}

In this subsection we establish a profinite version of our main result.

\medskip 

\paragraph{The site $\pfset{G}$}

For a profinite group $G$, denote by $G\text{-}\cat{FSet}$ the category of finite sets with a continuous action of $G$. One has a natural identification $\cat{Pro}(G\text{-}\cat{FSet})$ with $\pfset{G}$ where the latter category is the category of profinite topological spaces with a continuous action of $G$.

We endow $\pfset{G}$ with a Grothendieck topology where the coverings $\{S_i\to S\}$ are jointly surjective maps such that each map $S_i\to S$ satisfies a condition similar to that in the definition of the pro-\'etale site as in \cite[\S5.1]{BMS} (with `$\msf{U}_0\to\msf{U}$ \'etale' replaced by `$\msf{U}_0 \to\msf{U}$ is the pullback of a morphism in $G\text{-}\cat{FSet}$' and `$\msf{U}_\mu\to \msf{U}_{<\mu}$ is finite \'etale surjective' is replaced by  `$\msf{U}_\mu\to\msf{U}_{<\mu}$ is the pullback of a finite surjective map in $G\text{-}\cat{FSet}$').

\begin{prop}[{\cite[(1)]{ScholzeErratumToRigid}}] \label{group covering lem} 
   The collection $\{G\to \ast\}$ is a cover.
\end{prop}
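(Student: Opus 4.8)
The plan is to unwind the definition of a covering in the topology on $\pfset{G}$ and exhibit an explicit factorization witnessing that $\{G\to\ast\}$ satisfies the required condition; here $G$ is viewed as an object of $\pfset{G}$ via its left translation action on itself (equivalently, the projective system $\{G/N\}$ over open normal subgroups $N$, with $G$ acting by left translation). First I would recall that the topology on $\pfset{G}$ is modeled on \cite[\S5.1]{BMS}: a single map $S_0\to S$ is a covering if there is a cofiltered system $\{S_\mu\}$ with $S_0$ at the top, $S=\varprojlim S_\mu$ at the bottom (or more precisely $S_0\to S$ refines into such a tower), where the first step $\msf{U}_0\to\msf{U}$ is a pullback of a map in $G\text{-}\cat{FSet}$ and each successive $\msf{U}_\mu\to\msf{U}_{<\mu}$ is a pullback of a finite surjective map in $G\text{-}\cat{FSet}$, together with the quasi-compactness/jointly-surjective bookkeeping. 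Since $\ast$ is the terminal object (the one-point set with trivial action), a covering $\{G\to\ast\}$ just needs the single map $G\to\ast$ to be of this admissible form and to be surjective, which it is.

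The key step is therefore to write $G\to\ast$ as the required limit of pullbacks of surjections in $G\text{-}\cat{FSet}$. I would take the tower indexed by the open normal subgroups $N\trianglelefteq G$: at level $N$ put $\msf{U}_N = G/N$ with its left $G$-action, with transition maps the natural quotient maps $G/N'\to G/N$ for $N'\subseteq N$, and the bottom object $\msf{U}=\ast = G/G$. Each $G/N$ is a finite set with continuous $G$-action, the transition maps $G/N'\to G/N$ are $G$-equivariant finite surjections, hence (being maps between finite $G$-sets) are trivially pullbacks of maps in $G\text{-}\cat{FSet}$ — they \emph{are} such maps. The first step $G/N_0\to \ast$ for the largest relevant $N_0$ is a surjection of finite $G$-sets, so it is a pullback of a morphism in $G\text{-}\cat{FSet}$ along $\ast\to\ast$. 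Finally $\varprojlim_N G/N = G$ as a profinite topological space with $G$-action, and the map to $\ast$ is surjective. This exhibits $\{G\to\ast\}$ as a covering.

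The main subtlety — really the only place one must be a little careful — is checking that the tower above genuinely meets the bookkeeping conditions copied from \cite[\S5.1]{BMS}: one must verify the indexing poset is cofiltered with the correct shape (a well-ordered or at least cofiltered system with the prescribed behaviour at limit stages), that the quasi-compactness hypotheses on the $\msf{U}_\mu$ hold (each $G/N$ is finite, hence profinite and quasi-compact, so this is automatic), and that "pullback of a (finite surjective) morphism in $G\text{-}\cat{FSet}$" is interpreted correctly when the base is not itself finite — but in our case every $\msf{U}_\mu = G/N$ and the base $\msf{U}_{<\mu}$ of each step is again some $G/N$, so all the maps in sight already lie in $G\text{-}\cat{FSet}$ and there is nothing to pull back. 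Since this is exactly the content of \cite[(1)]{ScholzeErratumToRigid}, I would cite that for the precise matching of conventions and keep the argument short. Alternatively, and perhaps cleanest, one can deduce the claim by transporting the statement through the identification $\cat{Pro}(G\text{-}\cat{FSet})\simeq\pfset{G}$: the cover $\{G\to\ast\}$ corresponds to the pro-system $\{G/N\to\ast\}$, and the surjectivity of each $G/N\to \ast$ together with the fact that these maps lie in $G\text{-}\cat{FSet}$ is visibly an instance of the generating covers of the topology.
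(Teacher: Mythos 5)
First, a point of comparison: the paper itself gives no proof of this proposition --- it is imported wholesale from \cite[(1)]{ScholzeErratumToRigid} --- so your sketch already says more than the text does. Nevertheless there is a genuine gap, and it sits exactly at the point you flag as ``the main subtlety'' and then defer back to the citation. The topology on $\pfset{G}$ is modelled on \cite[\S5.1]{BMS}, where a single map must admit a presentation as a \emph{transfinite tower} $\varprojlim_{\mu<\lambda}\msf{U}_\mu$ indexed by an ordinal $\lambda$, with each $\msf{U}_\mu\to\msf{U}_{<\mu}:=\varprojlim_{\mu'<\mu}\msf{U}_{\mu'}$ a pullback of a finite surjection in $G\text{-}\cat{FSet}$. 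The system $\{G/N\}_N$ indexed by the poset of open normal subgroups is cofiltered but is not a tower of this shape, and for a general profinite group that poset need not contain a cofinal chain, so one cannot simply reindex. This is precisely why the statement required a corrigendum rather than being a definitional triviality; your ``alternative, cleanest'' argument via $\pfset{G}\simeq\cat{Pro}(G\text{-}\cat{FSet})$ fails for the same reason, since the covers are not arbitrary levelwise-surjective pro-systems.

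The missing step (which is the content of the cited erratum) is the well-ordering trick: enumerate the open normal subgroups as $(N_\mu)_{\mu<\lambda}$ by an ordinal $\lambda$ and set $\msf{U}_\mu=G/\bigcap_{\mu'\leq\mu}N_{\mu'}$. Then $\msf{U}_{<\mu}=G/H$ with $H=\bigcap_{\mu'<\mu}N_{\mu'}$ a closed, in general non-open, normal subgroup, and $\msf{U}_\mu=G/(H\cap N_\mu)$ is the pullback of the finite surjection $G/N_\mu\to G/(HN_\mu)$ of finite $G$-sets along $G/H\to G/(HN_\mu)$. Note that this also corrects your parenthetical claim that ``all the maps in sight already lie in $G\text{-}\cat{FSet}$ and there is nothing to pull back'': at limit ordinals $\msf{U}_{<\mu}$ is an honestly infinite profinite $G$-set, and the pullback clause in the definition is exactly what is needed there. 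Since you close by citing \cite[(1)]{ScholzeErratumToRigid} for ``the precise matching of conventions,'' your write-up is not so much wrong as it is deferred; but if the proof is to be self-contained, the well-ordering construction is the one thing that must actually be written down.
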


\begin{prop}\label{prop:G-pfset-finite-colim-subcanonical}
    The site $\pfset{G}$ is finite cocomplete and subcanonical.
\end{prop}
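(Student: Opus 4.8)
The plan is to deduce both claims from their evident counterparts for the category $G\text{-}\cat{FSet}$ of finite continuous $G$-sets, via the identification $\pfset{G}=\cat{Pro}(G\text{-}\cat{FSet})$. For finite cocompleteness, recall that $G\text{-}\cat{FSet}$ is finitely cocomplete: finite coproducts are disjoint unions, and the coequalizer of $f,g\colon S\rightrightarrows T$ is the quotient $T/R$ by the $G$-stable equivalence relation generated by $\{(f(s),g(s)):s\in S\}$. Since any finite diagram in a pro-completion $\cat{Pro}(\mc{D})$ may be realized as a cofiltered limit of finite diagrams valued in $\mc{D}$, the pro-completion of a finitely cocomplete category is again finitely cocomplete, its finite colimits being computed as the cofiltered limit over finite level of the corresponding colimits (cf.\ \cite[\S6]{KashiwaraSchapira}). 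Concretely in $\pfset{G}$: finite coproducts are disjoint unions of profinite $G$-spaces, and the coequalizer of $f,g\colon S\rightrightarrows T$ is $T/R$, where $R$ is now the smallest \emph{closed} $G$-stable equivalence relation on $T$ containing the image of $(f,g)$; this quotient is again profinite, and one verifies its universal property in $\pfset{G}$ directly, using that $(h\times h)^{-1}(\Delta_Z)$ is a closed equivalence relation for every morphism $h\colon T\to Z$ of $\pfset{G}$.

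For subcanonicity, fix an object $S=\{S_i\}$ of $\pfset{G}$. The universal property of cofiltered limits in $\cat{Pro}(G\text{-}\cat{FSet})$ gives $h_S=\varprojlim_i h_{S_i}$ as presheaves on $\pfset{G}$, where $h_{S_i}$ is the presheaf represented by the constant pro-object $S_i$. As the sheaf axiom is a limit condition, a limit of sheaves formed in presheaves is a sheaf, so it suffices to prove that $h_{S_0}$ is a sheaf for every finite $G$-set $S_0$. For this, note first that $G\text{-}\cat{FSet}$ equipped with its jointly-surjective topology is subcanonical — a surjection of finite $G$-sets is an effective epimorphism, its kernel pair recovering it, and joint surjections reduce to this — so that $h_{S_0}$ is a sheaf on $G\text{-}\cat{FSet}$. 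The fully faithful inclusion $G\text{-}\cat{FSet}\hookrightarrow\pfset{G}$ preserves finite limits and is continuous, inducing a morphism of topoi $\nu\colon\Sh(\pfset{G})\to\Sh(G\text{-}\cat{FSet})\simeq\GSet{G}$.

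It then remains to establish the analogue of Proposition~\ref{classical sheaf computation} in the present setting: for every sheaf $\mc{F}$ on $G\text{-}\cat{FSet}$, the natural map $\mc{F}^{\smallto}\to\nu^\ast\mc{F}$, where $\mc{F}^{\smallto}(\{T_j\})=\varinjlim_j\mc{F}(T_j)$, is a bijection on \emph{every} object of $\pfset{G}$ — and, in contrast to Proposition~\ref{prop:almost-representable-presheaf}, no restriction to a quasi-compact and quasi-separated subcategory is needed, since the underlying space $|\msf{U}|$ of any object $\msf{U}$ of $\pfset{G}$ is profinite, hence compact. Granting this, applying it to $\mc{F}=h_{S_0}$ and observing that $h_{S_0}^{\smallto}=h_{S_0}$ as presheaves on $\pfset{G}$ yields $h_{S_0}\simeq\nu^\ast h_{S_0}$, which is a sheaf, completing the argument. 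This analogue of Proposition~\ref{classical sheaf computation} is where the real work lies: following the proof of \cite[Lemma~3.16]{Scholzepadic}, it reduces to showing that $\mc{F}^{\smallto}$ is separated and that its sections descend, which in turn hinges on reducing an arbitrary covering family in $\pfset{G}$ — assembled from transfinite compositions of pullbacks of surjections in $G\text{-}\cat{FSet}$, joint surjections, and instances of Proposition~\ref{group covering lem} — to a single surjection pulled back from a finite level, by a compactness and cofinality argument exploiting compactness of the spaces $|\msf{U}|$. This approximation step, which here plays the role that $X^{\mathrm{qcqs}}_\proet$ plays in Section~\ref{s:cov-tau-loc}, is the technical heart of the argument.
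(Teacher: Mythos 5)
Your treatment of finite cocompleteness and your reduction of subcanonicity to the case of a finite $G$-set $S_0$ (via $h_S=\varprojlim_i h_{S_i}$ and the fact that a limit of sheaves taken in presheaves is a sheaf) coincide with the paper's argument, which likewise invokes \cite[Corollary 6.1.17 i)]{KashiwaraSchapira} for the first claim. Where you diverge is in the finite case, and there your proof has a genuine gap: you route everything through an ``analogue of Proposition \ref{classical sheaf computation} for $\pfset{G}$'' --- namely that $\mc{F}^{\smallto}\to\nu^\ast\mc{F}$ is bijective on every object for every sheaf $\mc{F}$ on $G\text{-}\cat{FSet}$ --- and you yourself identify this as ``the technical heart'' while offering only a sketch (separatedness plus descent, via an unperformed compactness-and-cofinality approximation of arbitrary covers by finite-level surjections). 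That statement is strictly stronger than what is needed, it is not proved anywhere in the paper in this form, and asserting it does not constitute a proof; as written, the essential content of subcanonicity has been deferred rather than established.

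The paper avoids this entirely by checking directly, in the proof of Proposition \ref{locally constant on profinite g-sets}, that $h_{S_0}=\Hom_{\cnts,G}(-,S_0)=\mc{F}_{S_0}$ is a sheaf: the sheaf axiom for the presheaf of not-necessarily-continuous $G$-maps holds because covers are jointly surjective (a surjection of sets is an effective epimorphism), and continuity is a local condition because every cover in $\pfset{G}$ admits a finite subcover --- the structural conditions on the maps force each $S_i\to S$ to have open image, so the images form an open cover of the compact space $S$ --- after which $\coprod_i S_i\to S$ is a surjection of compact Hausdorff spaces, hence a quotient map. I would recommend replacing your appeal to the unproven classicality statement with this direct verification; the compactness input you correctly sense is needed then appears only in the elementary form just described, rather than in a full approximation theorem for covers.
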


\begin{proof} 
Since one has an equivalence $\pfset{G}\simeq \cat{Pro}(G\text{-}\cat{FSet})$ and $G\text{-}\cat{FSet}$ has all finite colimits, the existence of all finite colimits in $\pfset{G}$ follows from \cite[Corollary 6.1.17 i)]{KashiwaraSchapira}. To show $\pfset{G}$ is subcanonical, let $S$ be an element of $\pfset{G}$. We can write $S=\varprojlim S_i$ where $S_i$ are finite objects of $\pfset{G}$. Since $h_S=\varprojlim_i h_{S_i}$ where the right hand side is a limit in the category of presheaves, we are reduced to showing that $h_S$ is a sheaf when $S$ is finite. This is handled in the proof of Proposition \ref{locally constant on profinite g-sets} below.
\end{proof}

\medskip
\paragraph{Locally constant sheaves on $\pfset{G}$}

We now describe the category $\cat{Loc}(\pfset{G})$ in terms of the category $G\text{-}\cat{Set}$.

\begin{prop} \label{locally constant on profinite g-sets}
    Let $G$ be a profinite topological group. Then, the functor
    \begin{displaymath}
    \GSet{G}\to \Loc\left(\pfset{G}\right),\qquad T\mapsto (\mc{F}_T:S\mapsto \Hom_{\cnts,G}(S,T))
    \end{displaymath}
    is an equivalence of categories.
\end{prop}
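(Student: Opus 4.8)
The plan is to prove the equivalence $\GSet{G} \to \Loc(\pfset{G})$ by constructing an explicit quasi-inverse and checking full faithfulness directly, leveraging the fact that the one-point set $\ast$ admits a cover by $G$, namely Proposition \ref{group covering lem}, so that a locally constant sheaf is ``constant after pulling back along $G \to \ast$.''

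First I would check that the functor $T \mapsto \mc{F}_T$ indeed lands in $\Loc(\pfset{G})$: the value $\mc{F}_T(\ast) = \Hom_{\cnts,G}(\ast,T) = T^G$ need not be all of $T$, but after base change along $G \to \ast$ the sheaf $\mc{F}_T$ becomes the constant sheaf with value $T$. Concretely, for $S$ an object of $\pfset{G}$, base changing to $G$ replaces $S$ by $S \times G$ with its $G$-action only on the $G$-factor (or, better, the twist trivializing the action), and one computes $\Hom_{\cnts,G}(S \times G, T) \cong \Hom_\cnts(S, T)$, which is exactly $\underline{T}(S)$ for the constant sheaf $\underline T$ on $\pfset{G}$. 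This uses that $\pfset{G}$ is subcanonical (Proposition \ref{prop:G-pfset-finite-colim-subcanonical}) so that constant sheaves are computed as continuous maps to $T$. So every $\mc{F}_T$ is locally constant, split by the single cover $\{G \to \ast\}$.

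Next I would show the functor is fully faithful. Given $T, T'$ in $\GSet{G}$, a morphism of sheaves $\mc{F}_T \to \mc{F}_{T'}$ is determined by its effect on the ``universal'' object: evaluating on $G \in \pfset{G}$ gives a map $\Hom_\cnts(G, T)^{?} \to \ldots$; more cleanly, by the Yoneda-type computation $\mc{F}_T(G) = \Hom_{\cnts,G}(G,T) \cong T$ naturally, and the residual $G$-action on $\mc{F}_T(G)$ coming from the $G$-action on the object $G \in \pfset{G}$ recovers the $G$-action on $T$. Thus $\mc{F}_{(-)}(G)$ with its natural $G$-action is a functor $\Loc(\pfset{G}) \to \GSet{G}$ which is the identity on the subcategory of the $\mc{F}_T$'s, proving full faithfulness; naturality and continuity of the resulting action are routine.

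The main obstacle, and the heart of the argument, is essential surjectivity: given $\mc{F} \in \Loc(\pfset{G})$, I must produce $T$ with $\mc{F} \cong \mc{F}_T$. Here I would argue that, since $\mc{F}$ is locally constant, there is a cover $\{S_i \to \ast\}$ over which $\mc{F}$ becomes constant; refining using Proposition \ref{group covering lem} and the structure of covers in $\pfset{G}$ (built from pullbacks of finite surjective maps of $G$-sets), one reduces to the cover $\{G \to \ast\}$ itself, so $\mc{F}|_G$ is the constant sheaf with some value $T := \mc{F}(G)$. The descent datum for $\mc{F}$ along $G \to \ast$ is then an isomorphism over $G \times G$ of the two pullbacks of $\underline T$; since $G \times G \to G$ (projection) is split by the multiplication-style section and $\underline T$ is the constant sheaf, this descent datum is precisely the data of a continuous $G$-action on $T$, and unwinding the cocycle condition shows it is an honest action. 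One then checks $\mc{F} \cong \mc{F}_T$ by comparing sections: for general $S \in \pfset{G}$, write $S$ as a limit of finite $G$-sets and reduce to $S$ finite, where $S$ is covered by pullbacks of maps of finite $G$-sets from $\ast$, at which point the sheaf axiom plus the explicit descent datum force $\mc{F}(S) = \Hom_{\cnts,G}(S,T)$. The delicate points are handling the topology on $T$ (continuity of the $G$-action: this follows because $\mc{F}(G) = \varinjlim$ over finite quotients, forcing each element to be fixed by an open subgroup) and justifying the reduction of an arbitrary local-triviality cover to the single cover $\{G \to \ast\}$, which is where the specific shape of the Grothendieck topology on $\pfset{G}$ and Proposition \ref{group covering lem} are essential.
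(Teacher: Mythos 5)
Your proposal is correct and follows essentially the same route as the paper: trivialize over the cover $\{G\to\ast\}$ via the slice equivalence $\pfset{G}/G\simeq\pfset{\ast}$, set $T=\mc{F}(G)$ with the $G$-action induced by the (right-multiplication) automorphisms of $G$ as an object of $\pfset{G}$ — which is exactly your descent datum over $G\times G\cong G\times G_\rmta$ — deduce continuity from $\mc{F}(G)=\varinjlim_U\mc{F}(G/U)$, and recover $\mc{F}(S)$ from the sheaf equalizer for $G\times S\to S$ after untwisting $G\times S\cong G\times S_\rmta$. The only presentational difference is that the paper packages the reduction to finite level as a "classicality" statement ($\mc{F}(\varprojlim S_i)=\varinjlim\mc{F}(S_i)$, checked after a cover) rather than first replacing the trivializing cover by $\{G\to\ast\}$; both devices do the same work.
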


\begin{proof}
We first show that $\mc{F}_T$ is a sheaf. As covers in $\pfset{G}$ are jointly surjective, it follows that the sheaf condition is satisfied for the presheaf sending $S$ to $\Hom_G(S,T)$ on $\pfset{G}$. One needs to check that continuity condition is local, which can be checked as follows. Any cover $\{U_i \to U\}$ in $\pfset{G}$ has a finite subcover. Passing to such subcover and considering $\coprod_i U_i \to U$, we can assume the map to be a surjective map of compact spaces. It is then automatically a quotient map of topological spaces, thus the continuity of functions can be tested after pullback to $\coprod_i U_i$.

For $S$ (resp.\@ $T$) an object of $\pfset{G}$ (resp.\@ $\GSet{G}$), denote the underlying set of $S$ (resp.\@ $T$) with trivial $G$-action by $S_{\rmta}$ (resp.\@ $T_{\rmta}$). We claim that $\mc{F}_T$ is trivialized after restricting to $G$ seen as an element of $\pfset{G}$. Indeed, the slice category $\pfset{G}/G$ is equivalent to the category of profinite sets $\pfset{*}$ via $(S \stackrel{f}{\to} G) \mapsto (f^{-1}(1_G))$. Under this equivalence, the pushforward of $\mc{F}_T$ is the sheaf on $\pfset{*}$ given by $U \mapsto \mathrm{Maps}_{\cnts}(U,T)$, which is precisely a constant sheaf in $\pfset{*}$.

Suppose now that $\mc{F}$ is a locally constant sheaf on $\pfset{G}$. We now show there exists $T$ in $\GSet{G}$ such that $\calF = \calF_T$. We claim that the obvious map $\colim \mc{F}(S)\to \mc{F}(S_i)$, where $S=\varprojlim S_i$ where $S_i$ are finite $G$-sets, is a bijection, which we refer to as $\mc{F}$ being classical. To show such an equality it suffices to check it after a cover (cf.\@ \cite[Lemma 5.1.4]{BhattScholze}). This reduces us to the constant case, which is trivial.

We define our candidate for $T$ as follows. As a discrete set, we set $T = \calF(G)$. The $G$-action on $T = \calF(G)$ is defined using the map $G \to \Aut_{\pfset{G}}(G)^{\mathrm{op}}$ given by $h \mapsto (g \mapsto g\cdot h)$ (where we view $G$ as an element of $\pfset{G}$ by acting on the left). To see that this action is continuous we must show the equality $\mc{F}(G)=\colim \mc{F}(G/U)$, as $U$ travels over the open normal subgroups of $G$, but this follows from classicality.

We now verify that $\mc{F}_T$ is isomorphic to $\mc{F}$. By Lemma \ref{group covering lem} the map $G \to *$ is a cover, and thus $G \times S \to S$ is a cover for any $S$ an object of $\pfset{G}$. Observe that $G \times S_{\rmta} \to S$ defined by $(g,s) \mapsto gs$ is isomorphic to $G \times S \stackrel{\mathrm{pr_S}}{\to} S$ in $\pfset{G}/S$ via the map $(g,s) \mapsto (g,gs)$. Similarly, $G \times G_{\rmta} \times S_{\rmta}$ is isomorphic to $G \times G \times S$ via the map $(g,h,s) \mapsto (g, gh, gs)$. We thus have the following isomorphism of diagrams. \begin{equation}\label{diagram identification}
    \xymatrixcolsep{5pc}\xymatrix{
        S\ar@{=}[d]  & \ar[l]\ar[d]^{\rotatebox{90}{$\sim$}} G \times S_{\rmta} & \ar@<-.5ex>[l] \ar@<.5ex>[l] \ar[d]^{\rotatebox{90}{$\sim$}} G \times G_{\rmta} \times S_{\rmta}\\ 
        S  & \ar[l]G \times S & \ar@<-.5ex>[l] \ar@<.5ex>[l]  G \times G \times S.
    }
\end{equation}
By the classicality of $\mc{F}$ we have a canonical identification \begin{equation*}
    \calF(G \times S_{\rmta}) = \colim_i \calF(G \times S_{i}) = \mathrm{Maps}(S_i,T) = \Hom_{\cnts}(S_\rmta,T)
\end{equation*} 
after presenting $S_{\rm ta} = \lim_i S_i$ for finite (discrete) $S_{i}$, and the middle equality follows from the canonical identifications 
\begin{equation*}
    \mc{F}(G \times S_i) = \mc{F}(\coprod_{s \in S_i} G) = \prod_{s \in S_i}\mc{F}(G) = \mathrm{Maps}(S_i,T).
\end{equation*}
Consider the exact sequence of sets
\begin{displaymath}
\calF(S) \to \calF(G \times S_{\rmta}) \rightrightarrows \calF(G \times G_{\rmta} \times S_{\rmta})
\end{displaymath}
obtained by using the identification given in Equation \eqref{diagram identification}, the observation that as an object of $\GSet{G}$ we have that $(G\times S)\times_S (G\times S)$ is isomorphic to $G\times G\times S$, and the sheaf sequence for the cover $G\times S\to S$. We then make the identifications
\begin{equation}\label{identification sequence Loc proof}\calF(G \times S_{\rmta}) = \Hom_{\cnts}(S_{\rmta},T),\qquad \calF(G \times G_{\rmta} \times S_{\rmta}) = \Hom_{\cnts}(G_{\rmta} \times S_{\rmta}, T)
\end{equation}
as above. As the maps $G \times G_{\rmta} \times S_{\rmta} \rightrightarrows G \times S_{\rmta}$ are explicitly given by $(g,h,s) \mapsto (g,s)$ and $(g,h,s) \mapsto (gh,h^{-1}s)$, and by the definition of the action of $G$ on $T$, we see that the corresponding maps 
\begin{equation*}
    \Hom_{\cnts}(S_{\rmta},T) \rightrightarrows \Hom_{\cnts}(G_{\rmta} \times  S_{\rmta}, T)
\end{equation*} are given by $f \mapsto f \circ \mathrm{pr}_{S_{\rmta}}$ and $f \mapsto ((h,s) \mapsto h\cdot f(h^{-1} \cdot s))$. Using this, and the sequence given in Equation \eqref{identification sequence Loc proof} we get the canonical identification of $\calF(S)$ and $\Hom_{\cnts,G}(S,T) = \calF_T(S)$, as desired.

We have shown that our functor is essentially surjective, and showing that it is fully faithful is routine. 
\end{proof}

\medskip
\paragraph{The site $X_\profet$}

Consider $\cat{Pro}(\FEt_X)$ as a full subcategory of $\ProEt_X$ and endow it with a Grothendieck topology as in \cite[\S5.1]{BMS} (with `$\msf{U}_0\to\msf{U}$ \'etale' replaced by `$\msf{U}_0\to\msf{U}$ finite \'etale').

\begin{prop}[{\cite[Proposition 3.5]{Scholzepadic},\cite{ScholzeErratumToRigid}}] \label{profinite site same thing as profinite sets} 
    Let $X$ be a connected adic space and let $\ov{x}$ be a geometric point of $X$. The functor
    \begin{equation*}
        X_\profet\to \pfset{\pi_1^\alg(X,\ov{x})},\qquad \{U_i\}\mapsto \varprojlim F_{\ov{x}}(U_i)
    \end{equation*}
    is an equivalence of sites.
\end{prop}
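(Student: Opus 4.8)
The plan is to produce a quasi-inverse via the classical fiber functor and to promote Scholze's equivalence on the level of categories to an equivalence of \emph{sites}, i.e.\ to check that coverings correspond on both sides. First I would recall that, by the theory of the \'etale fundamental group of a connected adic space, the functor $F_{\ov x}\colon \FEt_X\to \pi_1^\alg(X,\ov x)\text{-}\cat{FSet}$ is an equivalence of categories (this is \cite[Proposition 3.5]{Scholzepadic}, with the correction in \cite{ScholzeErratumToRigid}). Applying $\cat{Pro}(-)$ to this equivalence, and using the identification $\cat{Pro}(G\text{-}\cat{FSet})\simeq \pfset{G}$ recalled above together with $\cat{Pro}(\FEt_X)=X_\profet$ as categories, gives an equivalence of the underlying categories $X_\profet\isomto \pfset{\pi_1^\alg(X,\ov x)}$ sending $\{U_i\}$ to $\varprojlim F_{\ov x}(U_i)$. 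So the only remaining content is the comparison of Grothendieck topologies.

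For that comparison, I would argue directly from the way both topologies are defined (following \cite[\S5.1]{BMS}): a covering is a jointly surjective family $\{\msf U_i\to\msf U\}$ such that each $\msf U_i\to\msf U$ is built as a tower starting from an object that is the pullback of an object \'etale over $\msf U$ (resp.\ the pullback of a $G$-set over $\msf U$) followed by transition maps that are finite \'etale surjective (resp.\ finite surjective maps of $G$-sets), indexed by an ordinal. Under the equivalence $\FEt_X\simeq \pi_1^\alg\text{-}\cat{FSet}$, a map in $\FEt_X$ is finite \'etale surjective precisely when the corresponding map of finite $\pi_1^\alg$-sets is surjective; likewise, since finite \'etale morphisms are exactly the pullbacks of objects of $\FEt_X$, the two notions of "building block" match. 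Because the functor $\cat{Pro}(F_{\ov x})$ is an equivalence, it preserves and reflects fiber products, jointly surjective families, and the ordinal-indexed tower conditions verbatim; hence a family is a covering in $X_\profet$ if and only if its image is a covering in $\pfset{\pi_1^\alg(X,\ov x)}$. This shows the functor is continuous and cocontinuous, and being an equivalence of underlying categories that identifies the covering families on both sides, it is an equivalence of sites.

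I expect the main obstacle to be purely bookkeeping rather than conceptual: one must be careful that the definition of covering in \cite[\S5.1]{BMS} is phrased in terms of pro-objects and ordinal-indexed towers, and that "finite \'etale surjective" on the adic side corresponds exactly — not just up to the category equivalence but at the level of the defining diagrams — to "finite surjective map of $G$-sets", and that surjectivity of the pro-object maps (on underlying topological/profinite spaces) is preserved under $F_{\ov x}$. Once one verifies that $F_{\ov x}$ takes the underlying topological space $|\msf U|$ of an object of $X_\profet$ homeomorphically to the underlying profinite space of the corresponding $G$-set (which follows since $F_{\ov x}$ commutes with the relevant cofiltered limits and sends $\FEt$-objects to their fibers, finite discrete sets), the surjectivity conditions transport without difficulty and the proof concludes.
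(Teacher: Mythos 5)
First, a point of comparison: the paper does not actually prove this proposition --- it is imported wholesale from Scholze's work (Proposition 3.5 of \cite{Scholzepadic} together with the corrigendum \cite{ScholzeErratumToRigid}), so there is no in-paper argument to measure yours against. Your overall strategy --- pass the finite-level equivalence $\FEt_X\simeq \pi_1^{\alg}(X,\ov x)\text{-}\cat{FSet}$ through $\cat{Pro}(-)$, identify $\cat{Pro}(G\text{-}\cat{FSet})\simeq\pfset{G}$, and then match the two BMS-style topologies tower by tower --- is the natural one and is essentially what a written-out proof would do.

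There is, however, one genuine gap in your topology comparison. Joint surjectivity of a covering family is not a categorical condition, so it is not ``preserved and reflected verbatim'' by the equivalence: on the adic side it means surjectivity of $|\msf V|\to|\msf U|$, where $|\msf U|=\varprojlim|U_i|$ is the inverse limit of the underlying \emph{adic} spaces, while on the $G$-set side it means surjectivity of profinite sets. Your proposed bridge --- that $F_{\ov x}$ identifies $|\msf U|$ homeomorphically with the underlying profinite space of the corresponding $G$-set --- is false already for $\msf U=\{X\}$: its underlying space is $|X|$, typically far from a single point, whereas $F_{\ov x}(X)$ is a singleton. The correct comparison happens on connected components: $\pi_0(|\msf U|)\cong\varprojlim\pi_0(|U_i|)$ (this is exactly Lemma \ref{inverse limit connected component lemma} of the paper) and $\pi_0(|U_i|)\cong F_{\ov x}(U_i)/\pi_1^{\alg}(X,\ov x)$. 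One must then check that surjectivity on each side is detected on $\pi_0$ (respectively on orbit spaces): the image of a pro-finite-\'etale map of pro-objects is the intersection of the preimages of the clopen finite-level images, hence a union of connected components, and this intersection is realized as the actual image because cofiltered limits of non-empty spectral spaces along quasi-compact maps are non-empty; dually, the image of a map of profinite $G$-sets is a closed union of orbits. With that substitution in place of the homeomorphism claim, your argument goes through.
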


From this and Proposition \ref{prop:G-pfset-finite-colim-subcanonical} we immediately deduce the following.

\begin{cor}\label{cor:profet-finite-colim-subcanonical}
    The site $X_\profet$ has all finite colimits and is subcanonical.
\end{cor}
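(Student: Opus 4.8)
The plan is to deduce the corollary formally by transporting structure along an equivalence of sites. First suppose that $X$ is connected and fix a geometric point $\ov x$, writing $G=\pi_1^\alg(X,\ov x)$. Proposition~\ref{profinite site same thing as profinite sets} provides an equivalence of sites $X_\profet\isomto\pfset{G}$. Being in particular an equivalence of the underlying categories which carries covering families to covering families in both directions, it preserves all limits and colimits that exist in either site, and it induces an equivalence $\Sh(X_\profet)\simeq\Sh(\pfset{G})$ under which the presheaf represented by an object $\msf Y$ of $X_\profet$ corresponds, by Yoneda, to the presheaf represented by the image of $\msf Y$ in $\pfset{G}$. Consequently $X_\profet$ has all finite colimits (respectively, is subcanonical) if and only if $\pfset{G}$ does, and both of these assertions about $\pfset{G}$ are exactly the content of Proposition~\ref{prop:G-pfset-finite-colim-subcanonical}. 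Hence the connected case is immediate.

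For a general analytic locally strongly Noetherian adic space $X$ I would reduce to the connected case. Such an $X$ is locally the adic spectrum of a strongly Noetherian Tate ring, which is Noetherian and hence has only finitely many idempotents; therefore every quasi-compact open of $X$ has finitely many clopen connected components, so the connected components of $X$ are open and $X$ is their coproduct as adic spaces. One then checks that restriction along the open-and-closed immersions $X_j\hookrightarrow X$ identifies $X_\profet$ with the ``product'' of the sites $(X_j)_\profet$ in a way compatible with the formation of finite colimits in $\ProEt$ and with sheafification, and applies the connected case componentwise. (Alternatively, one can bypass the decomposition and verify both properties \'etale-locally on $X$, where one is reduced to finite colimits in $\UFEt$ and to the sheaf property of representable presheaves along covers that are \'etale-locally trivial.)

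There is no deep obstacle here: once Proposition~\ref{prop:G-pfset-finite-colim-subcanonical} is in hand — and its subcanonicity half is itself deferred to the proof of Proposition~\ref{locally constant on profinite g-sets}, where the point is that the continuity condition defining $\mc{F}_T$ is local for the topology on $\pfset{G}$ — the corollary is a purely formal consequence of an equivalence of sites. The only thing requiring care is the bookkeeping in the reduction to connected $X$, namely confirming that passage to an open-and-closed component genuinely commutes with both the formation of finite colimits in $\ProEt$ and with pro-\'etale sheafification.
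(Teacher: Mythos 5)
Your argument in the connected case --- transporting finite cocompleteness and subcanonicity along the equivalence of sites of Proposition~\ref{profinite site same thing as profinite sets} and invoking Proposition~\ref{prop:G-pfset-finite-colim-subcanonical} --- is exactly the paper's (one-line) deduction. The additional reduction to the connected case via the clopen decomposition of a locally strongly Noetherian adic space is correct bookkeeping that the paper leaves implicit, so your proof matches the intended one.
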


\medskip 
\paragraph{Localy constant sheaves on $X_\profet$}

We end this section by describing all the objects of $\cat{Loc}(X_\profet)$ using Proposition \ref{locally constant on profinite g-sets}. Define for any object $\msf{Y}=\{Y_j\}$ of $\cat{Pro}(\Et_X)$ the presheaf $h_{\msf{Y}, \profet}$ on $X_\profet$ obtained by restricting $h_{\msf{Y},\proet}$ to $X_\profet$. More explicitly, we have the following formula
\begin{equation*}
    h_{\msf{Y},\profet}(\{U_i\})=\varprojlim_j \varinjlim_i \Hom_{\Et_X}(U_i,Y).
\end{equation*}
We define $h_{\msf{Y},\profet}^\sharp$ to be the sheafification of this presheaf. The following result then follows from Proposition \ref{profinite site same thing as profinite sets}, Proposition \ref{locally constant on profinite g-sets}, and the fact that $\pi_1^\alg(X,\ov{x})\text{-}\cat{Set}\simeq \UFEt_X$.

\begin{prop}\label{prop:locally-constant-profet}
    For all $Y$ in $\UFEt_X$, the presheaf $h_{Y,\profet}$ is a sheaf and the functor
    \begin{equation*}
        \UFEt_X\to \cat{Loc}(X_\profet),\qquad Y\mapsto h_{Y,\profet}
    \end{equation*}
    is an equivalence of categories.
\end{prop}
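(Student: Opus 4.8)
The plan is to deduce this from Proposition~\ref{locally constant on profinite g-sets} by transporting it along the equivalence of sites of Proposition~\ref{profinite site same thing as profinite sets}, and then recognizing the transported functor as $Y\mapsto h_{Y,\profet}$. Write $G=\pi_1^\alg(X,\ov{x})$ and, exactly as in Proposition~\ref{profinite site same thing as profinite sets}, assume $X$ is connected; the general case follows by decomposing into connected components. Let $u\colon X_\profet\isomto \pfset{G}$ be the equivalence of sites $\{U_i\}\mapsto\varprojlim_i F_{\ov{x}}(U_i)$. First I would note that, since $u$ is an equivalence of sites, precomposition with $u$ gives an equivalence of topoi $\Sh(\pfset{G})\isomto\Sh(X_\profet)$, and that this preserves local constancy; hence it restricts to an equivalence $\Loc(\pfset{G})\isomto\Loc(X_\profet)$ sending the sheaf $\mc{F}_T$ associated to $T\in\GSet{G}$ to the locally constant sheaf $\{U_i\}\mapsto\Hom_{\cnts,G}\bigl(\varprojlim_i F_{\ov{x}}(U_i),T\bigr)$ on $X_\profet$. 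Composing with the Galois equivalence $F_{\ov{x}}\colon\UFEt_X\isomto\GSet{G}$ (the content of the statement that $\pi_1^\alg(X,\ov{x})\text{-}\cat{Set}\simeq\UFEt_X$) I obtain an equivalence $\UFEt_X\isomto\Loc(X_\profet)$; it then remains to check that it is naturally isomorphic to $Y\mapsto h_{Y,\profet}$.

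For this, I would use the explicit formula $h_{Y,\profet}(\{U_i\})=\varinjlim_i\Hom_{\Et_X}(U_i,Y)$ recorded before the statement (the case $\msf{Y}=\{Y\}$ of the displayed formula there). Fullness of $\UFEt_X\hookrightarrow\Et_X$ together with the equivalence $F_{\ov{x}}\colon\UFEt_X\isomto\GSet{G}$ gives $\Hom_{\Et_X}(U_i,Y)=\Hom_G\bigl(F_{\ov{x}}(U_i),F_{\ov{x}}(Y)\bigr)$, so the desired comparison reduces to the natural bijection
\[
    \varinjlim_i\Hom_G(S_i,T)\isomto\Hom_{\cnts,G}\Bigl(\varprojlim_i S_i,\,T\Bigr)
\]
for a cofiltered system $(S_i)$ of finite $G$-sets (with surjective transition maps, as may be assumed for a presentation of an object of $X_\profet$) and $T$ a discrete $G$-set. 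This is the one genuinely substantive point, and it is a classicality statement of the same nature as Proposition~\ref{classical sheaf computation} and of the classicality argument appearing inside the proof of Proposition~\ref{locally constant on profinite g-sets}: a continuous $G$-map $\varprojlim_i S_i\to T$ has finite image and clopen fibers, so by compactness and cofilteredness it factors $G$-equivariantly through some $S_k$, and two $G$-maps out of $S_i$ and $S_j$ agreeing on $\varprojlim_i S_i$ already agree on some $S_k$.

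Granting that bijection, $h_{Y,\profet}$ is identified, as a presheaf, with the transported sheaf; in particular this shows simultaneously that $h_{Y,\profet}$ is already a sheaf (no separate sheafification argument being needed) and that $Y\mapsto h_{Y,\profet}$ equals, up to natural isomorphism, the composite of the three equivalences above --- hence is itself an equivalence. I expect the main obstacle to be precisely this classicality bijection together with the bookkeeping needed to match the transported sheaf with the $\varprojlim$--$\varinjlim$ formula for $h_{\msf{Y},\profet}$; everything else --- local constancy being intrinsic to a topos, fullness of $\UFEt_X\hookrightarrow\Et_X$, and the invocation of the two cited propositions --- is formal.
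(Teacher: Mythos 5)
Your proposal is correct and follows essentially the same route as the paper: the paper's (very terse) proof likewise reduces to the connected case, transports Proposition~\ref{locally constant on profinite g-sets} along the equivalence of sites of Proposition~\ref{profinite site same thing as profinite sets}, and identifies $h_{Y,\profet}$ with $\mc{F}_{Y_{\ov x}}$. The only difference is that you make explicit the classicality bijection $\varinjlim_i\Hom_G(S_i,T)\cong\Hom_{\cnts,G}(\varprojlim_i S_i,T)$ that the paper leaves implicit in the phrase ``corresponds via the equivalence,'' and your compactness argument for it is correct.
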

\begin{proof}
It suffices to verify the first claim, and to assume $X$ is connected. The presheaf $h_{Y,\profet}$ corresponds via the equivalence in Proposition \ref{profinite site same thing as profinite sets} to the sheaf $\mc{F}_T$ from Proposition~\ref{locally constant on profinite g-sets} with $T=Y_{\ov{x}}$, and thus is a~sheaf. 
\end{proof}

\subsection{\texorpdfstring{Sheaves on $X_\proet$ which are pro-finite \'etale locally constant}{Sheaves on X proet which are pro-finite \'etale locally constant}}

We now to upgrade Proposition \ref{prop:locally-constant-profet} to a result about sheaves on $X_\proet$ which are pro-finite \'etale locally constant.

\medskip
\paragraph{Comparing $\Sh(X_\proet)$ and $\Sh(X_\profet)$}

The inclusion $\cat{Pro}(\FEt_X)\to \ProEt_X$ is a continuous functor of categories with Grothendieck topologies. Since this morphism preserves fiber products we get an induced map of sites $X_\proet\to X_\profet$. Denote the induced morphism of topoi $\Sh(X_\proet)\to\Sh(X_\profet)$ by $\theta_X$ or just $\theta$ when $X$ is clear from context. 

\begin{prop}[{cf.\@ \cite[Proposition VI.9.18]{AbbesGros}}] \label{prop:unit-isomorphism} 
    Let $X$ be a quasi-compact and quasi-separated adic space. Then, the unit map $\mathbf{1}\to \theta_\ast\theta^\ast$ for the adjunction $\theta^\ast\dashv \theta_\ast$ is an isomorphism. Thus, $\theta^\ast\colon \Sh(X_\profet)\to \Sh(X_\proet)$ is fully faithful with essential image those sheaves $\mc{F}$ for which the counit map $\theta^\ast\theta_\ast\mc{F}\to\mc{F}$ is an isomorphism.
\end{prop}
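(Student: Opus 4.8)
The plan is to verify the hypotheses of the standard adjunction-and-sheafification criterion, mirroring the structure already used in the proof of Proposition~\ref{classical corollary} and the cited result \cite[Proposition VI.9.18]{AbbesGros}. As in that argument, the second statement (full faithfulness of $\theta^\ast$ together with the description of its essential image) is formal category theory once the unit map $\mathbf{1}\to\theta_\ast\theta^\ast$ is known to be an isomorphism (see \cite[\S IV.1, Theorem~1]{MacLane} and \stacks{07RB}). So the entire content is to show that for every sheaf $\mc{F}$ on $X_\profet$, the unit $\mc{F}\to\theta_\ast\theta^\ast\mc{F}$ is an isomorphism.

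First I would reduce this to a computation on a convenient class of objects. Since $X$ is quasi-compact and quasi-separated, and by Corollary~\ref{cor:profet-finite-colim-subcanonical} the site $X_\profet$ is subcanonical with all finite colimits, it suffices to check that the unit map is an isomorphism when evaluated on objects of $X_\profet$ admitting a presentation $\{U_i\}$ with each $U_i$ finite \'etale over $X$, since these generate the topology; one may further pass to a presentation and, using Proposition~\ref{profinite site same thing as profinite sets}, translate the whole question into the setting of the site $\pfset{\pi_1^\alg(X,\ov x)}$ — or, if one prefers to stay geometric, one argues directly. Concretely, $(\theta_\ast\theta^\ast\mc{F})(\msf{U})$ is the value of the sheafification of the presheaf pullback of $\mc{F}$ along $\cat{Pro}(\FEt_X)\hookrightarrow\ProEt_X$, evaluated at $\msf{U}$, and one wants to identify this with $\mc{F}(\msf{U})$.

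The key step is then a cofinality/Čech argument: given an object $\msf{U}$ of $X_\profet$ and a pro-\'etale cover $\{\msf{V}_j\to\msf{U}\}$ refining it, one shows that it can be refined by a cover lying in $X_\profet$ (i.e.\ one coming from $\cat{Pro}(\FEt_X)$), so that the colimit defining the sheafification — a filtered colimit over the category $\mc{J}_{\msf{U}}$ of covering families, exactly as recalled in the proof of Lemma~\ref{lem:site-basis} — is already computed over covers in $X_\profet$, on which $\mc{F}$ satisfies descent. This is where the specific shape of pro-\'etale covers from \cite[\S5.1]{BMS} enters: a pro-\'etale cover of an object of $\cat{Pro}(\FEt_X)$ is built from an \'etale map followed by finite \'etale surjections, and over (pro-)finite \'etale $X$-spaces one can absorb the \'etale step, up to refinement, into the finite \'etale tower — morally because finite \'etale covers of a connected such object are cofinal among \'etale covers, by the theory recalled in Proposition~\ref{profinite site same thing as profinite sets}. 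I expect this cofinality statement to be the main obstacle: one must handle the interplay between the \'etale and pro-finite-\'etale topologies carefully, in particular the quasi-compactness needed to pass from an arbitrary \'etale cover to a finite \'etale refinement, which is exactly where the hypothesis that $X$ is quasi-compact and quasi-separated is used. Once that is in place, one invokes Lemma~\ref{lem:site-basis} with $\mc{C}=X_\proet$ and $\mc{B}$ the relevant subcategory (or argues directly with the $\mc{F}^+$ construction) to conclude that the unit is an isomorphism on the generating objects, hence everywhere, completing the proof.
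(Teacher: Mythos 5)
Your skeleton (formal category theory for the second half; reduce the unit isomorphism to a concrete computation) matches the paper's, but your key step fails. You claim that a pro-\'etale cover of an object $\msf{U}$ of $X_\profet$ can be refined by a cover lying in $X_\profet$, on the grounds that finite \'etale covers are cofinal among \'etale covers of such objects. This is false: take $X$ connected affinoid and the cover of $X$ by two proper admissible opens $U_1,U_2$, which is a cover in $X_\proet$. Any nonempty object $\msf{W}=\{W_i\}$ of $\cat{Pro}(\FEt_X)$ surjects onto $X$ (each nonempty $W_i$ has clopen, hence full, image in the connected space $X$, and the inverse limit of these surjections is still surjective by a spectral-space compactness argument), so $\msf{W}$ cannot factor through $U_1$ or $U_2$ over $X$, and no cover in $X_\profet$ refines $\{U_1,U_2\}$. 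For the same reason you cannot invoke Lemma~\ref{lem:site-basis} with $\mc{B}=\cat{Pro}(\FEt_X)$: its hypothesis that every object of $X_\proet$ is covered by objects of $\mc{B}$ fails (a proper open $U\subsetneq X$ admits no such cover at all). So the sheafification of the presheaf pullback genuinely involves covers that leave $X_\profet$, and your argument does not control it.

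The paper's route avoids this issue entirely. It first shows that $\theta_\ast$ commutes with filtered colimits, by proving $\theta$ is a coherent morphism of coherent topoi (this is where quasi-compactness of $|\msf{V}|$ for $\msf{V}$ in $X_\profet$, via Lemma~\ref{inverse limit connected component lemma}, enters). It then writes an arbitrary sheaf on $X_\profet$ as a filtered colimit of representables, which is legitimate because $X_\profet$ is subcanonical and finitely cocomplete (Corollary~\ref{cor:profet-finite-colim-subcanonical}); this reduces the unit isomorphism to the case $\mc{F}=h_{\msf{Y},\profet}$. For representables the unit map becomes the map $h_{\msf{Y},\profet}(\msf{V})\to h^\sharp_{\msf{Y},\proet}(\msf{V})$, whose bijectivity is Proposition~\ref{prop:almost-representable-presheaf}; that in turn rests on the classical-sheaf computation on quasi-compact quasi-separated objects (Proposition~\ref{classical sheaf computation}), i.e.\ Lemma~\ref{lem:site-basis} is applied with $\mc{B}=X^{\mathrm{qcqs}}_\proet$, not with the pro-finite \'etale objects. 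Your proposal is missing both the filtered-colimit reduction (and the coherence input it requires) and the correct substitute for the false cofinality, namely this computation on qcqs pro-\'etale objects.
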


\begin{proof} 
Again, the latter statements follow from general category theory. To show the first statement begin by noting that $\theta^\ast$, being a left adjoint, commutes with colimits. We claim that $\theta_\ast$ also commutes with filtered colimits. By \cite[Tome II, Expos\'{e} VI, Th\`{e}orem\'{e} 5.1]{SGA4} it suffices to prove that $\theta$ is a coherent morphism between coherent topoi. The fact that $\Sh(X_\proet)$ is coherent is verified in \cite[Proposition 3.12 (vii)]{Scholzepadic}, and the fact that $\Sh(X_\profet)$ is coherent follows easily from this. To show that $\theta$ is coherent let $\msf{V}=\{V_j\}$ be an object of $X_\profet$. This system consists of spectral spaces and the transition maps are quasi-compact. We deduce from Proposition \ref{inverse limit connected component lemma} that $|\msf{V}|$ is quasi-compact and quasi-separated from where we are finished by \cite[Proposition 3.12 (iv)]{Scholzepadic}.

Fix a sheaf $\mc{F}$ in $\Sh(X_\proet)$. We now show that the unit map $\mc{F}\to\theta^\ast\theta_\ast\mc{F}$ is an isomorphism. By Corollary \ref{cor:profet-finite-colim-subcanonical} the category $X_\profet/\mc{F}$ is filtered, where this category means the subcategory of the slice category $\cat{PSh}(X_\profet)/\mc{F}$ consisting of representable presheaves (see \cite[Tome I, Expos\'{e} I, 3.4.0]{SGA4}). By \cite[Tome I, Expos\'{e} II, Corollaire 4.1.1]{SGA4} the natural map 
\begin{equation*}
    \varinjlim_{\msf{Y}\in\cat{Pro}(\FEt_X)/\mc{F}} h_{\msf{Y},\profet}\to \mc{F}
\end{equation*} 
is an isomorphism where we do not need to sheafify $h_{\msf{Y},\profet}$ by Corollary \ref{cor:profet-finite-colim-subcanonical}. Since $\theta^\ast$ and $\theta_\ast$ both commute with filtered colimits we've thus reduced to showing that the unit map is an isomorphism when evaluated on representable sheaves $h_{\msf{Y},\profet}$. A computation using the adjunction property and \stacks{04D3} shows this is equivalent to the bijectivity of the map $h_{\msf{Y},\profet}(\msf{V})\to h_{\msf{Y},\proet}^\sharp(\msf{V})$. This follows from Proposition \ref{prop:almost-representable-presheaf}.
\end{proof}

\begin{prop} \label{prop:pullback-ufet-object}
    Let $X$ be a quasi-compact and quasi-separated adic space and let $Y$ be an object of $\UFEt_X$. Then, $\theta^\ast h_{Y,\profet}\simeq h_{Y,\proet}^\sharp$.
\end{prop}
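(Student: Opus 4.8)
The plan is to deduce the isomorphism from the fully faithfulness of $\theta^\ast$ established in Proposition~\ref{prop:unit-isomorphism}: I would identify $\theta_\ast h_{Y,\proet}^\sharp$ and then check that $h_{Y,\proet}^\sharp$ lies in the essential image of $\theta^\ast$. Two preliminary observations are needed. Write $u\colon \cat{Pro}(\FEt_X)\hookrightarrow \ProEt_X$ for the inclusion, so that $\theta^\ast$ is the sheafification of the presheaf pullback $u^p$. First, for $Z$ in $\FEt_X$ one has $u^p h_{Z,\profet}=h_{Z,\proet}$ --- a formal property of the presheaf pullback along any functor --- and since $X_\profet$ is subcanonical by Corollary~\ref{cor:profet-finite-colim-subcanonical}, $h_{Z,\profet}$ is already a sheaf, so $\theta^\ast h_{Z,\profet}\cong h_{Z,\proet}^\sharp$. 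Secondly, for an arbitrary $Y$ in $\UFEt_X$ I claim $\theta_\ast h_{Y,\proet}^\sharp\cong h_{Y,\profet}$: since $X$ is quasi-compact and quasi-separated, any object $\msf{V}$ of $\cat{Pro}(\FEt_X)$ is a cofiltered system of spectral spaces with quasi-compact transition maps, hence has quasi-compact and quasi-separated underlying space by Lemma~\ref{inverse limit connected component lemma}, so Proposition~\ref{prop:almost-representable-presheaf} gives that $h_{Y,\proet}(\msf{V})\to h_{Y,\proet}^\sharp(\msf{V})$ is bijective; as $(\theta_\ast h_{Y,\proet}^\sharp)(\msf{V})=h_{Y,\proet}^\sharp(\msf{V})$ and $h_{Y,\profet}(\msf{V})=h_{Y,\proet}(\msf{V})$ by the very definition of $h_{Y,\profet}$, the claim follows.

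Next I would pass from $\FEt_X$ to all of $\UFEt_X$. Writing $Y=\coprod_i Y_i$ with each $Y_i$ in $\FEt_X$, the inclusions $Y_i\hookrightarrow Y$ form a covering family in $X_\proet$, and I would argue from this that $h_{Y,\proet}^\sharp\cong\coprod_i h_{Y_i,\proet}^\sharp$ with the coproduct taken in $\Sh(X_\proet)$ --- that is, $Y\mapsto h_{Y,\proet}^\sharp$ sends disjoint unions to coproducts of sheaves. Since $\theta^\ast$ is a left adjoint, its essential image is closed under colimits, so together with the first observation this shows that $h_{Y,\proet}^\sharp\cong\coprod_i\theta^\ast h_{Y_i,\profet}=\theta^\ast(\coprod_i h_{Y_i,\profet})$ lies in the essential image of $\theta^\ast$. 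Writing $h_{Y,\proet}^\sharp\cong\theta^\ast\mc{G}$, the unit map $\mc{G}\to\theta_\ast\theta^\ast\mc{G}$ is an isomorphism by Proposition~\ref{prop:unit-isomorphism}, hence $\mc{G}\cong\theta_\ast h_{Y,\proet}^\sharp\cong h_{Y,\profet}$ by the second observation, and applying $\theta^\ast$ yields $h_{Y,\proet}^\sharp\cong\theta^\ast h_{Y,\profet}$, as desired. (This also reproves, via Proposition~\ref{prop:locally-constant-profet}, that $h_{-,\profet}$ is additive on $\UFEt_X$.)

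The only genuinely non-formal point, and the one I expect to require care, is the identity $h_{\coprod_i Y_i,\proet}^\sharp\cong\coprod_i h_{Y_i,\proet}^\sharp$ for an infinite disjoint union, i.e.\ that $Y\mapsto h_{Y,\proet}^\sharp$ preserves coproducts. This rests on the fact that a decomposition into a disjoint union is a pro-\'etale covering, together with a short descent computation which uses that a quasi-compact object has a profinite --- hence only finitely clopen-partitionable --- set of connected components, so that on qcqs objects the presheaf $h_{\coprod_i Y_i,\proet}$ differs from $\coprod_i h_{Y_i,\proet}$ only by a sheafification. Everything else is a formal consequence of the adjunction $\theta^\ast\dashv\theta_\ast$ and of Propositions~\ref{prop:almost-representable-presheaf} and~\ref{prop:unit-isomorphism} and Corollary~\ref{cor:profet-finite-colim-subcanonical}.
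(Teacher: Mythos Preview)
Your argument is correct, and its central non-formal ingredient—the decomposition $h_{Y,\proet}^\sharp\cong\coprod_i h_{Y_i,\proet}^\sharp$ deduced from Proposition~\ref{prop:almost-representable-presheaf} on qcqs objects—is exactly what the paper uses as well (phrased there as the filtered colimit $h_{Y,\proet}^\sharp\simeq\varinjlim_j h_{Y_j,\proet}^\sharp$ over finite subunions). The difference is on the pro-finite \'etale side. The paper simply observes that every object of $X_\profet$ is qcqs and hence $h_{Y,\profet}\simeq\varinjlim_j h_{Y_j,\profet}$ directly; since $\theta^\ast$ commutes with colimits and the finite \'etale case is formal, the result follows in one line. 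You instead compute $\theta_\ast h_{Y,\proet}^\sharp\cong h_{Y,\profet}$ (which uses the same qcqs observation) and then invoke Proposition~\ref{prop:unit-isomorphism} to recover the preimage under $\theta^\ast$. This works, but it is a detour: you are appealing to the full faithfulness of $\theta^\ast$, itself a nontrivial result, to deduce the identity $h_{Y,\profet}\cong\coprod_i h_{Y_i,\profet}$ that the paper checks by hand in half a sentence. The paper's route is shorter and logically lighter, while yours has the mild bonus of giving an independent proof that $h_{Y,\profet}$ is a sheaf (as the pushforward of one) and, as you note, of the additivity of $h_{-,\profet}$.
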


\begin{proof} 
Write $Y=\varinjlim_j Y_j$ with $Y_j$ objects of $\FEt_X$ and the transition maps are open embeddings. Since $\theta^\ast$ commutes with colimits, we're reduced to showing that 
\begin{equation*}
    h_{Y,\profet} \simeq \varinjlim_j h_{Y_j,\profet}\quad \text{and }\quad h_{Y,\proet}^\sharp\simeq \varinjlim_j h_{Y_j,\proet}^\sharp.
\end{equation*} 
The former is clear since every object of $X_\profet$ is quasi-compact and quasi-separated, and the latter follows from combining Proposition \ref{prop:almost-representable-presheaf} with \cite[Tome I, Expos\'{e} III, Th\`{e}orem\'{e} 4.1]{SGA4}.
\end{proof}

\medskip 
\paragraph{Locally constant sheaves on $X_\proet$ trivialized on a pro-finite \'etale cover} 

We now establish the special case of our main theorem, classifying locally constant sheaves on $X_\proet$ which become constant on a pro-finite \'etale cover of $X$.

\begin{prop}\label{prop:main-theorem-locally-constant-profinite-case}
    Let $X$ be a quasi-compact and quasi-separated adic space. Then, the functor
    \begin{equation*}
        \UFEt_X\to \cat{Loc}(X_\proet),\qquad Y\mapsto h^\sharp_{Y,\proet}
    \end{equation*}
    is fully faithful with essential image those objects $\mc{F}$ of $\cat{Loc}(X_\proet)$ which become constant on a~pro-finite \'etale cover of $X$.
\end{prop}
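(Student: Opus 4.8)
The plan is to factor the functor $Y\mapsto h^\sharp_{Y,\proet}$ through the pro-finite \'etale site. Specifically, by Proposition~\ref{prop:locally-constant-profet} the functor $\UFEt_X\to\cat{Loc}(X_\profet)$, $Y\mapsto h_{Y,\profet}$, is an equivalence (here $X$ is quasi-compact and quasi-separated, hence in particular we may assume it connected by working componentwise, though one should be slightly careful and instead argue directly using that $X_\profet$ is subcanonical with finite colimits by Corollary~\ref{cor:profet-finite-colim-subcanonical}). By Proposition~\ref{prop:pullback-ufet-object} we have $\theta^\ast h_{Y,\profet}\simeq h^\sharp_{Y,\proet}$, so the functor in question is naturally isomorphic to the composite $\UFEt_X\xrightarrow{\sim}\cat{Loc}(X_\profet)\xrightarrow{\theta^\ast}\Sh(X_\proet)$. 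Since $\theta^\ast$ is fully faithful by Proposition~\ref{prop:unit-isomorphism}, the composite is fully faithful, which gives the first claim.

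For the statement about the essential image, I would argue by double inclusion. First, any $h^\sharp_{Y,\proet}$ with $Y$ in $\UFEt_X$ is locally constant on $X_\proet$: pulling back along the pro-finite \'etale cover $\msf{X}\to X$ trivializing $Y$ (i.e.\ the limit of the finite \'etale Galois covers over which each component of $Y$ splits), the restriction of $h^\sharp_{Y,\proet}$ becomes a constant sheaf, essentially by the computation of sections on quasi-compact quasi-separated objects in Proposition~\ref{prop:almost-representable-presheaf} together with the fact that $h_{Y,\et}$ restricted to $\msf{X}$ is constant. In particular $h^\sharp_{Y,\proet}$ lies in $\cat{Loc}(X_\proet)$ and becomes constant on a pro-finite \'etale cover. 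Conversely, suppose $\mc{F}$ is an object of $\cat{Loc}(X_\proet)$ which becomes constant on some pro-finite \'etale cover $\msf{X}\to X$. The key point is that such an $\mc{F}$ is in the essential image of $\theta^\ast$, i.e.\ the counit $\theta^\ast\theta_\ast\mc{F}\to\mc{F}$ is an isomorphism: this can be checked after pulling back along the cover $\msf{X}\to X$, where $\mc{F}$ is constant and the claim reduces to the analogous statement for constant sheaves, which holds because constant sheaves are classical (Lemma~\ref{lem:locally classical}) and one has the compatibility of $\theta$ with the constant-sheaf functors. Then $\mc{F}\simeq\theta^\ast\mc{G}$ for $\mc{G}=\theta_\ast\mc{F}$ in $\Sh(X_\profet)$, and a descent argument shows $\mc{G}$ is itself locally constant on $X_\profet$ — indeed $\mc{G}|_{\msf{X}_\profet}\simeq\theta_{\msf{X},\ast}(\mc{F}|_{\msf{X},\proet})$ is constant since $\theta_\ast$ commutes with the relevant base change and sends constant sheaves to constant sheaves — so $\mc{G}\simeq h_{Y,\profet}$ for some $Y$ in $\UFEt_X$ by Proposition~\ref{prop:locally-constant-profet}, and hence $\mc{F}\simeq h^\sharp_{Y,\proet}$ by Proposition~\ref{prop:pullback-ufet-object}.

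The main obstacle I anticipate is the descent step: verifying that a sheaf $\mc{F}$ on $X_\proet$ which is constant after pullback along a single (non-\'etale, merely pro-finite \'etale) cover $\msf{X}\to X$ actually descends to a locally constant sheaf on $X_\profet$, and in particular lies in the image of $\theta^\ast$. One must make sure the cover $\msf{X}\to X$ can be taken to come from $X_\profet$ (rather than a more general pro-\'etale cover), and then carefully track how $\theta_\ast$ interacts with restriction to $\msf{X}$ — the cleanest route is to observe that $\msf{X}\times_X\msf{X}$ and its higher fiber powers are again pro-finite \'etale over $X$, so the \v{C}ech/descent data for $\mc{F}$ lives entirely over $X_\profet$, and then invoke that $\theta^\ast$ is fully faithful (Proposition~\ref{prop:unit-isomorphism}) together with the structure theory of $\cat{Loc}(X_\profet)$ from Proposition~\ref{prop:locally-constant-profet}. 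The compatibility of all the morphisms of topoi ($\nu$, $\theta$, and their analogues for $\msf{X}$) with base change is routine but needs to be stated with enough care to make the descent argument go through.
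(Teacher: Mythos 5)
Your argument is correct and follows essentially the same route as the paper: both factor the functor through $\Sh(X_\profet)$ via $\theta$, use the classification of $\cat{Loc}(X_\profet)$ through $\pfset{\pi_1^\alg(X,\ov{x})}$ (Propositions \ref{profinite site same thing as profinite sets} and \ref{locally constant on profinite g-sets}), and obtain essential surjectivity by showing the counit $\theta^\ast\theta_\ast\mc{F}\to\mc{F}$ is an isomorphism using restriction to the trivializing pro-finite \'etale (Galois) cover together with the triangle identity and Proposition \ref{prop:unit-isomorphism}. The only cosmetic differences are that the paper deduces full faithfulness by identifying the functor with $\nu^\ast$ and invoking Proposition \ref{classical corollary} rather than full faithfulness of $\theta^\ast$, and that the fact you need at the end is that constant sheaves lie in the essential image of $\theta^\ast$ (immediate from Proposition \ref{prop:unit-isomorphism}, since pullback preserves constant sheaves), not Lemma \ref{lem:locally classical}, which concerns classicality for $\nu$.
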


\begin{proof} 
We may assume that $X$ is connected. To see this functor has image contained in the correct subcategory of $\cat{Loc}(X_\proet)$ let $Y$ be an object of $\UFEt_X$. As observed in the proof of Proposition \ref{prop:locally-constant-profet} the sheaf $h_{Y,\profet}$ corresponds, under Proposition \ref{profinite site same thing as profinite sets} to $\mc{F}_T$ where $T=Y_{\ov{x}}$. From loc.\@ cit.\@ we see that $h_{Y,\profet}$ is an object of $\cat{Loc}(X_\profet)$. In particular, $\theta^\ast h_{Y,\profet}$ is an object of $\cat{Loc}(X_\proet)$ trivialized on a pro-finite \'etale cover of $X$ and so we're done by Proposition \ref{prop:pullback-ufet-object}. Moreover, our functor is fully faithful since it can be described, by Proposition \ref{prop:almost-representable-presheaf}, as $\nu^\ast$ restricted to $\UFEt_X$ from where we deduce fully faithfulness by Proposition \ref{classical corollary}.

To show our functor is essentially surjective let $\mc{F}$ an object of $\Sh(X_\proet)$ trivialized on a pro-finite \'etale cover of $X$. In particular, it is trivialized on a fixed universal pro-finite Galois cover $\tilde{X}$ of $X$ which, under the equivalence in Proposition \ref{profinite site same thing as profinite sets}, corresponds to $\pi_1^\alg(X,\ov{x})$ acting on itself by left multiplication. By combining Proposition \ref{profinite site same thing as profinite sets} and Proposition \ref{locally constant on profinite g-sets}, there exists some $Y$ in $\UFEt_X$ and an isomorphism $ \psi : h_{Y,\profet} \isomto \theta_{*} \mc{F}$ of objects of $\Sh(X_\profet)$. One then obtains a morphism 
\begin{equation*}
    h^\sharp_{Y,\proet} \isomto \theta^*h_{Y,\profet} \isomto  \theta^*\theta_*\mc{F} \to \mc{F}
\end{equation*}
where the first isomorphism comes from Proposition \ref{prop:pullback-ufet-object}, the second map is $\theta^\ast \psi$, and the last map is the counit map. We claim that this is an isomorphism. It remains to check that the counit map $\phi : \theta^*\theta_*{\mc F} \to {\mc F}$ is an isomorphism. To see this, first observe that since this is a morphism of sheaves and $\tilde{X} \to X$ is a cover in $X_\proet$, it is enough to check that $\phi_{|\tilde{X}}$ is an isomorphism. As both sheaves are locally constant and trivialized when restricted to the connected cover $\tilde{X}$, it is enough to check that $\phi(\tilde{X})\colon\theta^*\theta_*{\mc F}(\tilde{X}) \to \mc{F}(\tilde{X})$ is a bijection. This can be checked after applying $\theta_*$. The map obtained from unit and counit $\theta_* \to \theta_* \theta^*\theta_* \to \theta_*$ is the identity morphism (see \cite[\S IV.1, Theorem 1]{MacLane}). By Proposition \ref{prop:unit-isomorphism}, we know that the first morphism in this composition is an isomorphism. It follows that the second morphism is an isomorphism too. Thus, $\theta_* \theta^*\theta_*{\mc F}(\tilde{X}) = \theta_*{\mc F}(\tilde{X})$, as desired.
\end{proof}

\subsection{Main result}

We now arrive at the main result of this section. 

\begin{thm} \label{main locally constant theorem} 
    Let $X$ be an adic space. Then, the functor
    \begin{equation*}
        \Cov_X^\et\to \cat{Loc}(X_\proet),\quad Y\mapsto h^\sharp_{Y,\proet}
    \end{equation*}
    is an equivalence of categories. 
\end{thm}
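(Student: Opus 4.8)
The plan is to bootstrap from the pro-finite étale case treated in Proposition \ref{prop:main-theorem-locally-constant-profinite-case} to the general statement by étale descent, after first disposing of full faithfulness and of the fact that the functor really lands in $\cat{Loc}(X_\proet)$. For full faithfulness I would first identify the functor with $\nu_X^\ast$: for $Y$ in $\Et_X$ one has $h_{Y,\proet}=h_{Y,\et}^{\smallto}$, so the canonical map $h_{Y,\et}^{\smallto}\to\nu_X^\ast h_{Y,\et}$ induces $h^\sharp_{Y,\proet}\to\nu_X^\ast h_{Y,\et}$, and by Proposition \ref{classical sheaf computation} together with Proposition \ref{prop:almost-representable-presheaf} both sides agree on every object of $X_\proet^{\mathrm{qcqs}}$, so this is an isomorphism of sheaves. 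Hence the functor of the theorem, restricted from $\Et_X$, is naturally isomorphic to the composite $\Cov_X^\et\hookrightarrow\Et_X\xrightarrow{h_{-,\et}}\Sh(X_\et)\xrightarrow{\nu_X^\ast}\Sh(X_\proet)$, which is fully faithful: the first arrow because $\Cov_X^\et$ is a full subcategory, the second because the Yoneda embedding of $X_\et$ is fully faithful, the third by Proposition \ref{classical corollary}. That the functor lands in $\cat{Loc}(X_\proet)$ is seen by taking, for $Y$ in $\Cov_X^\et$, an étale cover $U\to X$ with $Y_U\in\UFEt_U$: then $h^\sharp_{Y,\proet}|_U\cong h^\sharp_{Y_U,\proet}$ is locally constant by Proposition \ref{prop:main-theorem-locally-constant-profinite-case} applied over a cover of $U$ by quasi-compact opens, and since $U\to X$ is a pro-étale cover and local constancy is pro-étale local, $h^\sharp_{Y,\proet}$ is locally constant.

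For essential surjectivity, let $\mc{F}$ be in $\cat{Loc}(X_\proet)$ and let $\{\msf{U}_i\to X\}$ be a pro-étale cover trivializing it. Choosing presentations $\msf{U}_i=\{U_{i,j}\}$ with $U_{i,0}\to X$ étale and the transition maps finite étale and surjective, the family $\{U_{i,0}\to X\}$ is again an étale cover, each $\msf{U}_i\to X$ factors through it, and $\msf{U}_i\to U_{i,0}$ is a pro-finite étale cover trivializing $\mc{F}|_{U_{i,0}}$; refining along quasi-compact opens of the $U_{i,0}$, I may thus assume given an étale cover $\{X_a\to X\}_{a\in A}$ with each $X_a$ quasi-compact and quasi-separated and with $\mc{F}|_{X_a}$ constant on a pro-finite étale cover of $X_a$. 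Proposition \ref{prop:main-theorem-locally-constant-profinite-case} then provides $Y_a$ in $\UFEt_{X_a}$ and isomorphisms $\eta_a\colon h^\sharp_{Y_a,\proet}\isomto\mc{F}|_{X_a}$. On the overlaps $X_{ab}:=X_a\times_X X_b$ (again quasi-compact, quasi-separated, and analytic locally strongly Noetherian), the isomorphism $\eta_b^{-1}\circ\eta_a$ relating the sheaves attached to $Y_a|_{X_{ab}},Y_b|_{X_{ab}}\in\UFEt_{X_{ab}}$ comes, by the full faithfulness established above, from a unique isomorphism $\varphi_{ab}\colon Y_a|_{X_{ab}}\isomto Y_b|_{X_{ab}}$ in $\Et_{X_{ab}}$, and the cocycle identity on triple overlaps follows from the corresponding identity for the $\eta_a$ together with full faithfulness. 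By étale descent the data $(Y_a,\varphi_{ab})$ glue to an object $Y$ of $\Et_X$; taking $U=\coprod_a X_a$ one has $Y_U\cong\coprod_a Y_a\in\UFEt_U$, so $Y$ lies in $\Cov_X^\et$, and by construction the local isomorphisms $h^\sharp_{Y,\proet}|_{X_a}\cong h^\sharp_{Y_a,\proet}\xrightarrow{\eta_a}\mc{F}|_{X_a}$ agree on overlaps and glue to an isomorphism $h^\sharp_{Y,\proet}\cong\mc{F}$.

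Apart from the formal points, the two places that require genuine care are the reduction of an arbitrary pro-étale trivializing cover to an \emph{étale} cover over which $\mc{F}$ is merely pro-finite étale locally constant, so that Proposition \ref{prop:main-theorem-locally-constant-profinite-case} becomes applicable, and the bookkeeping of the compatibilities in the descent step; I expect the former to be the main conceptual obstacle.
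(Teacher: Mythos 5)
Your argument follows the paper's proof closely in its first two parts: the identification of the functor with $\nu_X^\ast$ via Propositions \ref{prop:almost-representable-presheaf} and \ref{classical sheaf computation}, full faithfulness via Proposition \ref{classical corollary}, and the verification that the image lies in $\cat{Loc}(X_\proet)$ by restricting to an étale cover trivializing $Y$ into an object of $\UFEt$ are all exactly what the paper does. For essential surjectivity the paper argues that both $\Cov^\et_X$ and $\cat{Loc}(X_\proet)$ are stacks for the étale topology (citing \cite[Corollary 3.1.9]{Warner} for the former) and thereby reduces to the affinoid case, which is Proposition \ref{prop:main-theorem-locally-constant-profinite-case}; you instead unfold this into an explicit descent argument. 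Your reduction from an arbitrary pro-étale trivializing cover to an étale cover $\{X_a\to X\}$ over which $\mc{F}$ is pro-finite étale locally constant is correct and is the same reduction the paper's affinoid localization implicitly performs.

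The one genuine gap is the sentence ``by étale descent the data $(Y_a,\varphi_{ab})$ glue to an object $Y$ of $\Et_X$.'' Effective descent of étale adic spaces along étale covers is not a formality: just as for schemes, descent data for objects that are not affine or finite over the base need not be effective, and here the $Y_a$ are typically infinite disjoint unions of finite étale covers while the descent isomorphisms $\varphi_{ab}$ have no reason to respect those decompositions, so one cannot glue piece by piece. This effectivity is precisely the content of $\Cov^\et_X$ being a stack for the étale topology, which the paper outsources to Warner's thesis. The step is true, but it requires either that citation or a direct argument (for instance, gluing the corresponding sheaves on $X_\et$ --- which is formal --- and then proving that the resulting sheaf is representable by an adic space étale over $X$). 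As written, this is the unjustified crux of your essential surjectivity argument, and notably it is not one of the two points you flagged as needing care.
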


\begin{proof} The final claim follows immediately Corollary \ref{fiber functor embedding for covtau} so we focus on the first claim. 
Suppose first that $Y\to X$ is an object of $\Cov_X^\et$. We claim that the object $h^\sharp_{Y,\proet}$ of $\Sh(X_\proet)$ lies in the subcategory $\Loc(X_\proet)$. But, there exists an \'etale cover $\{U_i\to X\}$ such that $U_i$ is affinoid for all $i$, and such that $Y_{U_i}$ is an object of $\UFEt_{U_i}$ for all $i$. Since $h^\sharp_{Y,\proet}$ restricted to $U_i$ is $h^\sharp_{Y_{U_i},\proet}$ in $\Sh(X_\proet/U_i)=\Sh((U_i)_\proet)$ we know from Proposition \ref{prop:main-theorem-locally-constant-profinite-case} that $h^\sharp_{Y,\proet}$ restricted to each $U_i$ is locally constant. Thus, $h^\sharp_{Y,\proet}$ itself is locally constant. 

As our our functor is nothing but $\nu_X^\ast$ by Proposition \ref{prop:almost-representable-presheaf}, it is fully faithful by Proposition~\ref{classical corollary}. Given this, and the fact both the source $\Cov^\et_X$ and the target $\cat{Loc}(X_\proet)$ naturally form stacks on $X$ for the \'etale topology (the former by \cite[Corollary 3.1.9]{Warner}), it is enough to show essential surjectivity \'etale locally on $X$. Thus, it is enough to assume $X$ is affinoid and show that every sheaf of sets $\mc{F}$ of $X_\proet$ which becomes constant on a pro-finite \'etale cover of $X$ comes from an object of $\UFEt_X$. This is Proposition~\ref{prop:main-theorem-locally-constant-profinite-case}.
\end{proof}

\begin{cor}
    Let $(X, \ov x)$ be a connected pointed rigid $K$-space. Then, the stalk functor
    \begin{equation*}
        F_{\ov{x}}\colon \cat{ULoc}(X_\proet)\to\GSet{\pi_1^{\dJ,\et}(X,\ov{x})}
    \end{equation*}
    is an equivalence of categories, and consequently $(\cat{ULoc}(X_\proet),F_{\ov{x}})$ is a tame infinite Galois category with fundamental group $\pi_1^{\dJ,\et}(X,\ov{x})$.  
\end{cor}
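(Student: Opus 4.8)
The plan is to combine Theorem~\ref{main locally constant theorem}, which provides an equivalence $\Cov_X^\et\isomto\cat{Loc}(X_\proet)$, with Theorem~\ref{taus are tame} applied to $\tau=\et$, which says $(\UCov_X^\et,F_{\ov{x}})$ is a tame infinite Galois category with fundamental group $\pi_1^{\dJ,\et}(X,\ov{x})$ and an equivalence $F_{\ov{x}}\colon\UCov_X^\et\isomto\GSet{\pi_1^{\dJ,\et}(X,\ov{x})}$ (Corollary~\ref{fiber functor embedding for covtau}). The strategy is to promote the equivalence of Theorem~\ref{main locally constant theorem} from $\Cov_X^\et$ to $\UCov_X^\et$ by passing to disjoint unions on both sides, so that $\UCov_X^\et\simeq\cat{ULoc}(X_\proet)$, and then to check that this equivalence is compatible with the fiber functors induced by $\ov x$.

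First I would observe that both $\Cov_X^\et$ and $\cat{Loc}(X_\proet)$ are closed under finite limits but not under arbitrary coproducts, and that $\UCov_X^\et$ (resp.\ $\cat{ULoc}(X_\proet)$) is by definition obtained by formally adjoining arbitrary disjoint unions of objects of $\Cov_X^\et$ (resp.\ $\cat{Loc}(X_\proet)$). Since the functor $Y\mapsto h_{Y,\proet}^\sharp$ is an equivalence on the level of connected (indecomposable) objects and both the sheaf category $\Sh(X_\proet)$ and the category of étale $X$-spaces admit arbitrary coproducts which are moreover disjoint and respected by the functor (sheafification and the Yoneda-type construction both commute with coproducts, and a disjoint union of geometric coverings is again a geometric covering by the results quoted in Section~\ref{s:cov-tau}), the equivalence extends uniquely to $\UCov_X^\et\isomto\cat{ULoc}(X_\proet)$. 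Concretely, an object of $\cat{ULoc}(X_\proet)$ is a coproduct $\coprod_\alpha\mc{F}_\alpha$ with each $\mc{F}_\alpha$ locally constant, hence of the form $h_{Y_\alpha,\proet}^\sharp$ for a uniquely determined $Y_\alpha\in\Cov_X^\et$, and $\coprod_\alpha Y_\alpha$ is the desired preimage in $\UCov_X^\et$; one checks that morphisms match up because a map out of a connected object lands in a single summand.

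Next I would identify the fiber functor. The stalk functor $F_{\ov x}\colon\cat{ULoc}(X_\proet)\to\cat{Set}$ sends a sheaf to its stalk at the pro-\'etale point determined by $\ov x$; under the equivalence with $\UCov_X^\et$ this corresponds to $Y\mapsto\Hom_X(\ov x,Y)=F_{\ov x}(Y)$, because the stalk of $h_{Y,\proet}^\sharp$ at $\ov x$ computes exactly the set of geometric points of $Y$ above $\ov x$ (this is a direct unwinding of the colimit defining the stalk together with Proposition~\ref{classical sheaf computation}, which shows $h_{Y,\proet}^\sharp=\nu_X^\ast h_{Y,\et}$, and $\nu_X$ is compatible with the points). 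Granting this identification, the corollary follows formally: $F_{\ov x}\colon\cat{ULoc}(X_\proet)\to\GSet{\pi_1^{\dJ,\et}(X,\ov{x})}$ factors as the equivalence $\cat{ULoc}(X_\proet)\isomto\UCov_X^\et$ followed by the equivalence $F_{\ov x}\colon\UCov_X^\et\isomto\GSet{\pi_1^{\dJ,\et}(X,\ov{x})}$ of Corollary~\ref{fiber functor embedding for covtau}, hence is itself an equivalence; and being equivalent to the tame infinite Galois category $(\UCov_X^\et,F_{\ov x})$, the pair $(\cat{ULoc}(X_\proet),F_{\ov x})$ is tame infinite Galois with the same fundamental group.

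The main obstacle, I expect, is the bookkeeping around disjoint unions: one must be careful that $\cat{ULoc}(X_\proet)$ really is equivalent to the category of arbitrary (small) coproducts of locally constant sheaves computed \emph{inside} $\Sh(X_\proet)$, that these coproducts are disjoint and stable under the functor, and that the functor on these coproduct categories remains fully faithful (which uses that a morphism from a connected object to a coproduct factors through one summand, together with full faithfulness of $\nu_X^\ast$ from Proposition~\ref{classical corollary}). The compatibility of the two fiber functors is the other point requiring genuine care, but it reduces to the stalk computation already implicit in the proof of Theorem~\ref{main locally constant theorem}.
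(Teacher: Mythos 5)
Your proposal is correct and follows essentially the same route the paper intends: the paper gives no separate argument for this corollary beyond the remark in the proof of Theorem~\ref{main locally constant theorem} that it ``follows immediately from Corollary~\ref{fiber functor embedding for covtau},'' i.e.\ one extends the equivalence $\Cov^\et_X\isomto\Loc(X_\proet)$ to disjoint unions, matches the stalk functor at $\ov x$ with $F_{\ov x}$ on $\UCov^\et_X$, and transports the tame infinite Galois structure from $(\UCov^\et_X,F_{\ov x})$. Your attention to the bookkeeping about coproducts and the stalk identification is exactly the (routine) content the paper suppresses.
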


\bibliographystyle{amsalpha}
\renewcommand{\MR}[1]{MR \href{http://www.ams.org/mathscinet-getitem?mr=#1}{#1}}
% \input{article1.bbl}
% \bibliography{bib}
\providecommand{\bysame}{\leavevmode\hbox to3em{\hrulefill}\thinspace}
\providecommand{\MR}{\relax\ifhmode\unskip\space\fi MR }
% \MRhref is called by the amsart/book/proc definition of \MR.
\providecommand{\MRhref}[2]{%
  \href{http://www.ams.org/mathscinet-getitem?mr=#1}{#2}
}
\providecommand{\href}[2]{#2}

\end{document}